\DeclareSymbolFont{forpolishl}{T1}{cmr}{m}{n}
\DeclareMathSymbol{\mathrmL}{0}{forpolishl}{'212}
\DeclareSymbolFont{forpolishbl}{T1}{cmr}{bx}{n}
\DeclareMathSymbol{\mathbfL}{0}{forpolishbl}{'212}
\DeclareMathSymbol{\Phalf}{0}{forpolishl}{'275}
\DeclareMathSymbol{\Nabla}{2}{symbols}{"72}
\DeclareMathSymbol{\minetriangle}{1}{symbols}{"35}
\DeclareSymbolFont{forpolishblit}{T1}{cmr}{bx}{it}
\DeclareMathSymbol{\mathbfitL}{0}{forpolishblit}{'212}
\newcommand{\f}{\ensuremath{\varphi}}
\newcommand{\p}{\ensuremath{\psi}}
\newcommand{\x}{\ensuremath{\chi}}
\newcommand{\1}{\ensuremath{\overline{1}}}
\newcommand{\0}{\ensuremath{\overline{0}}}
\newcommand{\tuple}[1]{\ensuremath{\langle{#1}\rangle}}  % fixed size of angle brackets
\newcommand{\vectn}[1]{\ensuremath{#1_1,\ldots,#1_n}}    % list of 1..n variables
\newcommand{\tri}{\triangle}
\newcommand{\thalf}{\ensuremath{{\mathchoice
			{\mbox{\hspace{-0.3pt}}\raisebox{.37ex}{$\scriptstyle\frac12$}}
			{\mbox{\hspace{-0.3pt}}\raisebox{.37ex}{$\scriptstyle\frac12$}}
			{\mbox{\hspace{-0.5pt}}\raisebox{.25ex}{$\scriptscriptstyle\frac12$}}
			{\mbox{\hspace{-0.5pt}}\raisebox{.1ex}{$\scriptscriptstyle\frac12$}}
		}}}
		\newcommand{\lang}[1]{\ensuremath{\mathcal #1}}
		\newcommand{\fmVL}[2]{\ensuremath{\mathit{Fm}_{{\lang{#2}}}^{{#1}}}}
		\newcommand{\fmL}[1]{\fmVL{}{#1}}
		\newcommand{\fm}{\fmL{L}}
		\newcommand{\logic}[1]{\ensuremath{\mathrm #1}}
		\newcommand{\alg}[1]{{\ensuremath{\boldsymbol {\mathit{#1}}}}}
		\newcommand{\PL}{{\ensuremath{\mathrm{PL}}}}
		\newcommand{\G}{{\ensuremath{\mathrm{G}}}}
		\newcommand{\MV}{{\ensuremath{\mathrm{MV}}}}
		\newcommand{\Luk}{\ensuremath{{\mathrmL}}}
		\newcommand{\LPi}{\ensuremath{{\Luk\mathrm{\Pi}}}}
		\newcommand{\LPih}{\ensuremath{{\Luk\mathrm{\Pi}\thalf}}}
		\newcommand{\prl}{{\logic{P}\mathrmL}}
		\def\plabel#1{\@bsphack
			\protected@write\@auxout{}%
			{\string\newlabel{#1}{{$($\@propprefix\@currentlabel$)$}{\thepage}}}%
			\@esphack}
		\newcounter{qcounter}                                           %  new list environments
\theoremstyle{plain}
\newtheorem{theorem}{Theorem}[section]
\newtheorem{lemma}[theorem]{Lemma}
\newtheorem{proposition}[theorem]{Proposition}
\theoremstyle{definition}
\newtheorem{definition}[theorem]{Definition}
\newtheorem{convention}[theorem]{Convention}
\newtheorem{example}[theorem]{Example}
\newtheorem*{Remark*}{Remark}
\theoremstyle{remark}
\newtheorem{remark}[theorem]{Remark}
\newtheorem*{remark*}{Remark}
\newcommand{\Rc}{\mathbb{R}}
\newcommand{\Gcal}{\ensuremath{\mathcal{G}}}
\newcommand{\Q}{\mathbb{Q}}
\renewcommand{\emptyset}{\varnothing}
\renewcommand{\phi}{\varphi}
\newcommand{\Ex}{\mathsf{E}}
\newcommand{\pr}{\mathbf{pr}}
\newcommand{\pd}{\ensuremath{{\mathtt{ProbDistr}}}}
\providecommand{\abs}[1]{\lvert#1\rvert}
\newcommand{\ES}[2]{#1{\downharpoonright}{#2}}
\renewcommand{\prl}{{\logic{P\mathrmL}}} 
\renewcommand{\vec}[1]{\ensuremath{\boldsymbol{#1}}}
\newcommand{\bigsymbolsizechoice}[1]{\mathchoice
    {\mbox{\raisebox{-2pt}{\LARGE{$#1$}}}}
    {\mbox{\raisebox{-1pt}{\large{$#1$}}}}
    {\mbox{\raisebox{-1pt}{\normalsize{$#1$}}}}
    {\mbox{\raisebox{-1pt}{\scriptsize{$#1$}}}}}
\newcommand{\bigop}[1]{\ensuremath{\mathop{\bigsymbolsizechoice{#1}}\limits}}
\begin{document}
%
%\frontmatter          % for the preliminaries
%
\pagestyle{headings}  % switches on printing of running heads

\title{Representing Strategic Games and Their Equilibria\\ in Many-Valued Logics\thanks{%
Supported by projects P402/12/1309 of the Czech Science Foundation; 7AMB13AT014 of the Ministry of Education, Youth, and Sports of the Czech Republic; RVO 67985807;
and Austrian Science Fund (FWF) project P25417--G15.
The work on a major revision of the paper was supported by the joint
project of the Austrian Science Fund (No.~I1897--N25) and the Czech Science
Foundation (No.~15--34650L). The authors are indebted to the anonymous referees for their invaluable comments which led to a major improvement of the paper.}
}

%\titlerunning{Representing Strategic Games}

\author[1,4]{Libor B\v{e}hounek}
\author[1]{Petr Cintula}
\author[2]{Chris Ferm\"uller}
\author[3]{Tom\'a\v{s} Kroupa}
\affil[1]{\small Institute of Computer Science, Czech Academy of Sciences,
    Pod Vod\'arenskou v\v{e}\v{z}\'i~2, 182~07~Prague, Czech Republic}
\affil[2]{Institute of Computer Languages 185.2, Vienna University of Technology,
    Favoritenstra\ss e~9--11, 1040~Vienna, Austria}
\affil[3]{Institute of Information Theory and Automation, Czech Academy of Sciences,
    Pod Vod\'arenskou v\v{e}\v{z}\'i~4, 182~08~Prague, Czech Republic}
\affil[4]{Institute for Research and Applications of Fuzzy Modeling, University of Ostrava, NSC IT4Innovations,
    30.~dubna~22, 701~03~Ostrava~1, Czech Republic}

\maketitle              % typeset the title of the contribution

\vspace{-2ex}

\begin{abstract}
We introduce the notion of logical $\alg{A}$-games for a fairly general class of algebras $\alg{A}$ of real truth-values.
This concept generalizes the Boolean games
%FFV a more careful formulation taking into account different strands of logical games?
{of Harrenstein et al.}\
as well  as the recently defined {\L}ukasiewicz games of Marchioni and Wooldridge.
We demonstrate that a wide range of strategic $n$-player games can be represented
as logical $\alg{A}$-games. Moreover we show how to construct, under rather general conditions,
propositional formulas in the language of  $\alg{A}$ that correspond to pure
and mixed Nash equilibria of logical $\alg{A}$-games.

\paragraph*{Keywords:}
{strategic games, many-valued logics, Nash equilibria, \L ukasiewicz games}
\end{abstract}

\section{Introduction}

Various types of connections between logic and game theory
increasingly receive attention in the literature. (We refer to~\cite{Benthem:LogicInGames} for a  recent
monograph devoted to several aspects of this topic.)
This paper is a contribution to a special line of research that has been initiated by the
 introduction of the concept of a Boolean game in~\cite{Harrenstein-HMW:BooleanGames}.
Originally, Boolean games have been introduced as two-person zero-sum extensive-form
games. However, here we follow the bulk of literature that views
Boolean games as special strategic $n$-player games, where each player's
payoff function is expressed by a~classical  propositional  (i.e., Boolean) formula and
her strategies consist in truth-value assignments to a subset
of the propositional variables occurring in the payoff functions.
The~focus on classical formulas severely limits the scope of
strategic games that can be represented in this format. In particular, one is
often interested in finite games with more than just two possible payoff values,
which entails that the payoff functions cannot be identified with Boolean formulas.
For example, nearly all of the well-known strategic games that are usually represented
by a $2\times 2$ payoff matrix,
such as the Prisoner's Dilemma, Chicken, the Coordination Game, etc.\
(see, e.g., \cite{Dresher:GamesStrategy,Maschler-Solan-Zamir:GameTheory,Osborne:IntroductionGameTheory,Owen:GameTheory}),
fall into this category.
This fact has motivated
Marchioni and Wooldridge~\cite{Marchioni-Wooldridge:LukasiewiczGames,Marchioni-Wooldridge:LukasiewiczGamesTOCL}
to generalize Boolean games to so-called {\L}ukasiewicz games, where
the payoff functions are represented by
formulas of {an appropriate (finite or infinite)}
{\L}ukasiewicz logic and the strategies are assignments of
the logic's truth values to relevant variables.
A number of well-known strategic games, or at least modified variants of them,
are representable as {\L}ukasiewicz games.
Moreover, it is shown in~\cite{Marchioni-Wooldridge:LukasiewiczGames}
that for any given finite {\L}ukasiewicz game~$\Gcal$ there is a~propositional formula $\f_{\Gcal}$
which is satisfiable in the corresponding {\L}ukasiewicz logic iff $\Gcal$ has a pure Nash equilibrium.
In~\cite{Marchioni-Wooldridge:LukasiewiczGamesTOCL} this result is generalized to
infinite{-}valued {\L}ukasiewicz logic.
However, instead of directly expressing Nash equilibria by propositional formulas,
a detour via classical first-order theories of corresponding algebras is employed.

The main aim of this paper is to generalize and expand the approach of
Marchioni and Wooldridge in at least three different aspects:
\begin{itemize}
\item[(1)] We show that the restriction to {\L}ukasiewicz logics as the underlying formalism for the
     representation of games is neither necessary nor convenient. In fact, rather than just proposing
     additional many-valued logic as possible representation formalisms, we aim at
     identifying \emph{general conditions} that are sufficient for representing wide classes of
     games as well as expressing their Nash equilibria.
\item[(2)] We remove yet another, quite different limitation of {\L}ukasiewicz games:
  Marchioni and Wooldridge~\cite{Marchioni-Wooldridge:LukasiewiczGames,Marchioni-Wooldridge:LukasiewiczGamesTOCL} identify the set
  of strategies of a given player with the set of \emph{all} assignments of truth values
  to the variables controlled by that player. While this makes sense for Boolean games, it amounts to an unnecessary,
  and in fact rather obstructive, restriction in the many-valued setting.
  As we will demonstrate, by simply using \emph{subsets} of all possible assignments to
  represent a player's strategies, not only a wider class of games, but in particular
  \emph{all finite} strategic games become representable as logical games.
\item[(3)] So far, only the characterization of the existence of \emph{pure} Nash equilibria
   by logical formulas has been considered in the literature. We will show that, for sufficiently
   expressive logics, also \emph{mixed} Nash equilibria can be characterized by propositional
   formulas. The emphasis
   here is not on their existence, which in all cases relevant here is guaranteed by
   Nash's Theorem, but on the fact that we may use propositional variables to represent
   probability distributions and thus obtain a one-to-one correspondence between the assignments
   satisfying a particular formula and the mixed equilibria of the games in question.
\end{itemize}
Overall, we demonstrate that many-valued logics provide an adequate setting for
the formal representation of large classes of strategic games, in particular of
all finite strategic games.
This includes the reduction of questions about pure as well as mixed Nash equilibria
into questions about the satisfiability of appropriate propositional formulas. While
our results immediately suggest straightforward algorithms for checking (the existence of)
equilibria, it remains to be seen which further benefits can be reaped from
our logical approach to the representation of strategic $n$-person games.

Let us mention some further features that distinguish our approach.
While we talk about ``logical games'', the central reference is actually to a wide class of
so-called standard algebras, i.e., algebras over (subsets of) the real unit interval $[0,1]$.
For our purpose, the distinction between formulas and terms over an algebra is in fact immaterial.
As already mentioned, in~\cite{Marchioni-Wooldridge:LukasiewiczGamesTOCL}
the existence of pure Nash equilibria is not expressed directly by propositional formulas,
but only indirectly via classical first-order theories of particular algebras. We will show
that this detour is unnecessary.
A further item that deserves to be emphasized right away concerns the very concept
of representing a given strategic game as a logical game (with respect
to a given algebra). Note that the notion of representability is only implicit
in~\cite{Marchioni-Wooldridge:LukasiewiczGames,Marchioni-Wooldridge:LukasiewiczGamesTOCL} as well as in the literature
on Boolean games. (\cite{Dunne-Hoek:BooleanGames} discusses succinct representability of
Boolean games, but does not refer to general strategic games.)
By making representability explicit, we
disambiguate this somewhat vague notion and are able to formally characterize
the scope of representable games. Moreover, this move supports the identification
of different conditions on expressibility that are sufficient to express Nash equilibria
at various levels of succinctness.

\pagebreak

We emphasize that the aim of this paper is to demonstrate that many-valued logics provide a~versatile and very general
tool for the formalization of strategic games. Once appropriate notions and conditions
are identified, checking that the corresponding logical representations are indeed adequate
is routine and consequently left to the reader in most cases.

The paper is organized as follows. In Section~\ref{sec:preliminaries} we present the basic concepts and terminology used in later sections: Subsection~\ref{ssec:logics} fixes some notions regarding algebras and logics  over (subsets of) the real unit interval $[0,1]$. Subsection~\ref{ssec:strategic_games} reviews basic game-theoretic notions and illustrates these by
presenting a number of concrete examples that are taken up in later sections.
Section~\ref{sec:representing} introduces the concept of a logical game with respect to an arbitrary standard algebra. We demonstrate that a wide class of ordinary strategic games
can be represented as logical games and provide corresponding examples.
In Section~\ref{sec:pureNE} we show how (the existence of) pure Nash equilibria in logical games can be expressed by propositional formulas under rather weak conditions.  Section~\ref{sec:mixedNE} is devoted to the construction of propositional formulas that correspond to mixed Nash equilibria for suitable classes of logical games. We conclude with Section~\ref{sec:conclusion} containing a short summary and some remarks on possible directions of future research, with emphasis on dealing with infinite games.

\section{Preliminary notions}\label{sec:preliminaries}

\subsection{Logics and algebras} \label{ssec:logics}

We will work with logics expressed in various propositional languages.
A (propositional) language $\lang{L}$  is understood as a collection of connectives equipped with arities.
The corresponding set $\fm$ of propositional formulas is defined over
a countably infinite set of propositional variables as usual:
\begin{itemize}
\item $\fm$ contains all propositional variables.
\item $\circ(\vectn{\f}) \in \fm$ if  $\vectn{\f} \in \fm$ and $\circ$ is an $n$-ary connective in~$\lang{L}$.
    (We will use infix notation for familiar binary connectives.
    Nullary connectives are also called \emph{truth constants.})
\item Nothing else is in $\fm$.
\end{itemize}

In this paper we consider special many-valued logics, each of which is determined by a single particular algebra of truth degrees;
proof systems will play no role here. (The interested reader can find information about
deductive aspects of the kinds of many-valued logics treated here in the handbook chapter~\cite{Metcalfe:Handbook}.)
We will always assume that each language contains at least three binary
connectives $\wedge$, $\vee$, and~$\rightarrow$.
We will identify propositional languages with algebraic types, connectives with operation symbols,
and formulas with terms of the corresponding algebra.

\begin{definition} \label{def:A}
A \emph{standard algebra} $\alg{A}$ (of truth degrees) in a language $\lang{L}_\alg{A}$
is a tuple $\tuple{A,\tuple{\circ^{\alg{A}}}_{{\circ}\in \lang{L}_\alg{A}}}$,
where:
\begin{itemize}
	\item The domain $A$ is a subset of the real interval $[0,1]$ such that $1\in A$.
	\item For each $n$-ary connective $\circ\in\lang{L}_\alg{A}$,
        its interpretation $\circ^{\alg{A}}$ in \alg A
        is an $n$-ary operation on $A$ (or an element of $A$ if $n=0$).
	\item For every $x,y \in A$:
\begin{itemize}
\item $x \wedge^\alg{A} y = 1$ iff $x=1$ and $y = 1$.
\item $x \vee^\alg{A} y = 1$ iff $x=1$ or $y = 1$.
\item $x \to^\alg{A} y = 1$ iff $x \leq y$.
\end{itemize}
\end{itemize}
\end{definition}

Note that the realization $\wedge^\alg{A},\vee^\alg{A}$
of the connectives $\wedge,\vee$ in the algebra~\alg A
need not be the minimum and maximum (under the usual order of reals).
Even if this will be the case in typical standard algebras,
the only conditions required of $\wedge^\alg{A},\vee^\alg{A}$ are those of Definition~\ref{def:A},
as they already ensure the validity of all theorems given below.
Restricting the interpretation of $\wedge,\vee$ in \alg A to the lattice operations
would thus impose an unnecessary limitation on the class of admissible logics and on the generality of the results.

\begin{example}\label{ex:standard_algebras}
Let us list several prominent algebras that can be seen as standard algebras in the
sense of Definition~\ref{def:A}:
\begin{itemize}
\item The \emph{two-valued Boolean algebra} $\alg{2}$ in the language $\wedge,\vee,\rightarrow,\neg,\0,\1$
      (where $x\rightarrow y$ is defined as $\neg x\vee y$;
      we will not mention arities of well-known connectives).

\item
The \emph{standard \G-algebra} $[0,1]_\G = \tuple{[0,1], \wedge, \vee, \rightarrow, \0, \1}$ (G~for G\"odel), where $\tuple{[0,1], \wedge, \vee,  \0, \1}$ is the lattice $[0,1]$ with the usual order,
        and $x\rightarrow y =1$ if $x\leq y$ and $x\rightarrow y =y$ otherwise.
        (For G-algebras see, e.g., \cite{Baaz-Preining:Handbook,Behounek-Cintula-Hajek:Handbook}.)

\item The \emph{$(n+1)$-valued \G-algebra} $\alg{G}_n$, i.e., the subalgebra of $[0,1]_\G$ with the domain
    $\{0, \frac1{n}, \dots,\allowbreak \frac{n-1}{n},\allowbreak 1 \}$.
    (See, e.g., \cite{Baaz-Preining:Handbook,Behounek-Cintula-Hajek:Handbook}.)

\item The \emph{standard \MV-algebra} $[0,1]_\mathrmL = \tuple{[0,1], \&, \rightarrow, \wedge, \vee, \0, \1}$,
    where $\tuple{[0,1], \wedge, \vee, \0, \1}$ is the lattice $[0,1]$ with the usual order of reals,
    $x\mathbin{\&}y=\max(x+y-1,0)$, and $x\rightarrow y=\min(1-x+y,1)$.
    In \MV-algebras (and their expansions), it is customary to introduce the defined connectives
    $\neg x=x\rightarrow\0$; $x\oplus y=\neg x\rightarrow y$; and $x\ominus y=x\& \neg y$.
    In (the subalgebras of) the standard \MV-algebra, they are realized as
    $1-x$; $\min(x+y,1)$; and $\max(x-y,0)$ respectively.
    Let us remark that \MV-algebras are often introduced in the language $\oplus,\neg,\0$,
    in which case $\&,{\rightarrow},{\wedge},{\vee},{\1}$ are defined connectives;
    the two definitions are term-wise equivalent.
    (For \MV-algebras see, e.g., \cite{Cignoli-Ottaviano-Mundici:AlgebraicFoundations}.)

\item The \emph{$(n+1)$-valued \MV-algebra} $\mathbfitL_n$ ($\mathbfitL$ for {\L}ukasiewicz),
i.e., the subalgebra of $[0,1]_\mathrmL$ with the domain $\{0, \frac1{n}, \dotsc, \frac{n-1}{n},1 \}$.
(See, e.g., \cite{Cignoli-Ottaviano-Mundici:AlgebraicFoundations}.)

\item If the truth constants for all elements of $\mathbfitL_n$
(i.e., nullary connectives $\overline0,\overline{\frac1n},\dots,\overline{\frac{n-1}n},\overline1$
interpreted by the corresponding elements of~$\mathbfitL_n$)
are added to the language,
we denote the resulting expansion of $\mathbfitL_n$ by~$\mathbfitL_n^c$ (see, e.g.,~\cite{Bou-EGR:Many-ValuedModal}).
Analogously we define the expansion $\alg G_n^c$ of $\alg G_n$ by truth constants.

\item The \emph{standard \prl-algebra} $[0,1]_{\prl}$, which is an expansion of $[0,1]_\mathrmL$ by a binary connective~$\odot$,
    interpreted as the usual algebraic product of reals.
    (\prl-algebras are also known as $\mathrm{PMV}$-algebras; see, e.g., \cite[Sect.~5]{Esteva-Godo-Marchioni:Handbook}.)

\item The \emph{standard \LPi-algebra} $[0,1]_{\LPi}$, which is an expansion of $[0,1]_\prl$ by a binary connective $\rightarrow_\Pi$,
    interpreted as $x\rightarrow_\Pi y = 1$ if $x\leq y$ and $x\rightarrow_\Pi y = \frac{y}{x}$ otherwise.
    (See, e.g., \cite[Sect.~5]{Esteva-Godo-Marchioni:Handbook}.)

\item The expansions of $[0,1]_\mathrmL$, $[0,1]_\G$, $[0,1]_{\prl}$, and $[0,1]_{\LPi}$ with nullary
  operations (i.e., constants) $\bar{r}$ for all $r\in[0,1]\cap \mathbb{Q}$,
  where each constant $\bar r$ is interpreted by the rational number $r$.
  We denote these algebras, respectively,
  as $[0,1]_{\Q\mathrmL}$, $[0,1]_{\Q\G}$, $[0,1]_{\Q\prl}$, and $[0,1]_\LPih$.
  (The traditional symbol for the last mentioned algebra
    is due to the fact that in $[0,1]_\LPih$, all rational constants are definable from the constant for $\frac12$.
    For the expansions of the standard \MV-, G-, \prl-, and \LPi-algebra by rational constants see, e.g., \cite{Esteva-Godo-Marchioni:Handbook}.)

\item The expansion of any standard algebra $\alg{A}$ such that $0\in A$ by the unary operation $\tri$ interpreted as $\tri x = 1$
 if $x=1$ and $\tri x=0$ otherwise; we denote this algebra by $\alg{A}^\tri$.
 (The operation $\tri$ is definable in $\mathbfitL_n$, $\mathbfitL_n^c$, $[0,1]_{\LPi}$, and $[0,1]_{\LPih}$,
 so $\mathbfitL_n^\tri=\mathbfitL_n$ modulo term-wise equivalence,
 and similarly for $\mathbfitL_n^{c\,\tri}$, $[0,1]_{\LPi}^\tri$, and $[0,1]_{\LPih}^\tri$.
 For expansions by $\tri$ see, e.g., \cite[Ch.~2.4]{Hajek:1998} or \cite[Sect.~2.2.1]{Behounek-Cintula-Hajek:Handbook}.)
\end{itemize}
\end{example}

Let us recall several standard notions of the algebraic semantics of many-valued logics.
(For a detailed modern exposition see, e.g., \cite{Cintula-Noguera:Handbook}.)

\begin{definition}
Let $\alg{A}$ be a standard algebra.
An $\alg{A}$-\emph{evaluation} is a mapping $e$ assigning an element of $A$ to each propositional variable.
Every $\alg{A}$-evaluation can be uniquely extended to a mapping from $\fmL{L_\alg A}$ into~$A$,
by setting $e(\circ(\vectn{\f})) = \circ^\alg{A}(e(\f_1), \dots, e(\f_n))$
for each $n$-ary connective ${\circ} \in \lang{L}_\alg{A}$ and formulas $\vectn{\f}$.

An $\lang{L}_\alg{A}$-formula $\f$ is \emph{satisfied} by an $\alg A$-evaluation $e$ if $e(\f) = 1$. A formula $\f$ is $\alg{A}$-\emph{satisfiable} if it is satisfied by some $\alg A$-evaluation.

The \emph{logic of\/ $\alg{A}$} is identified with the consequence relation
$\models_\alg{A}$, defined as follows:
\[
    \Gamma \models_\alg{A} \f \text{ if and only if for each $\alg{A}$-evaluation $e$: if } e[\Gamma] \subseteq \{1\}, \text{ then } e(\f) = 1.
\]
\end{definition}

A trivial, but important observation is that the value of a formula \f\ in an \alg A-evaluation depends only on the variables occurring in~\f.
Let $\vec{v}$ be a sequence $v_1,\dots, v_n$ of pairwise different variables; we shall write $\f(\vectn{v})$, or just $\f(\vec{v})$, to denote that all variables occurring in $\f$ are among those in~$\vec{v}$.
Given a formula $\f(\vectn{v})$ and a sequence of formulas $\vectn{\p}$, we shall write $\f(\vectn{\p})$ to denote the formula where each variable $v_i$ is replaced by the formula $\p_i$.

\begin{definition}\label{def:fA}
Given a formula $\f(\vectn{v})$, we define the mapping $\f^\alg{A}\colon A^{n} \to A$
by setting:
\[
    \f^\alg{A}(\vectn{a}) = e(\f),
\]
where $e$ is any $\alg{A}$-evaluation such that $e(v_i) = a_i$.
\end{definition}

For any given standard algebra $\alg{A}$, it is an interesting question how to describe the class
of all functions $\f^\alg{A}$. The classes of functions expressible in prominent standard algebras are described in Table~\ref{tab:functions}. These delimitations are of a ``piecewise'' character, i.e., based on a decomposition of the corresponding hypercube $[0,1]^n$ into domains;
in particular, each row of the table specifies:
(i)~whether the functions are all those which satisfy the other constraints listed on the row, or just the continuous ones;
(ii)~whether the possible domains are all $\mathsf{Q}$-semialgebraic sets,\footnote{%
    A set $S\subseteq[0,1]^n$ is {\em (linear) $\mathsf{Q}$-semialgebraic} if it is a Boolean combination of sets of the form
    $\{\tuple{\vectn{x}} \in [0,1]^n \mid P(\vectn{x})>0\}$ for (linear) polynomials $P$ with integer coefficients.}
or just the linear ones;
and (iii)~how the functions restricted to these domains are characterized.\footnote{%
    In the table, a \emph{shift} means an absolute coefficient of the polynomial.}
For further details see \cite{McNaughton:FunctionalRep,Montagna-Panti:Adding}
or \cite[Sect.~4.1]{Aguzzoli-Bova-Gerla:Handbook},
where also the (more complicated) result for G\"odel algebras is presented.
Let us furthermore remark that the case of $[0,1]_\prl$ is a long-standing open problem related to the so-called Pierce--Birkhoff conjecture~\cite{Birkhoff-Pierce:Conjecture,Lapenta-Leustean:PierceBirkhoff}.

\begin{table}[t]
\begin{center}
\renewcommand{\arraystretch}{1.4}
\small
\begin{tabular}{|l|c|c|l|}
\hline \textbf{Algebra}         & \textbf{Functions}  & \textbf{Domains} & \textbf{Pieces}
\\ \hline $[0,1]_\mathrmL$          & continuous            & linear    & linear functions with integer coefficients
\\ \hline $[0,1]_\mathrmL^\tri$     & all                   & linear    & linear functions with integer coefficients
\\ \hline $[0,1]_{\Q\mathrmL}$      & continuous            & linear    & linear functions with integer coefficients and a rational shift
\\ \hline $[0,1]_{\Q\mathrmL}^\tri$ & all                   & linear    & linear functions with integer coefficients and a rational shift
\\ \hline $[0,1]_\prl^\tri$         & all                   & all       & polynomials with integer coefficients
\\ \hline $[0,1]_{\Q\prl}^\tri$     & all                   & all       & polynomials with rational coefficients
\\ \hline $[0,1]_\LPih$             & all                   & all       & fractions of polynomials with integer coefficients
\\ \hline $[0,1]_\LPi$              & all                   & all       & functions $f$ expressible in $[0,1]_\LPih$ such that $f[\{0,1\}^n]\subseteq\{0,1\}$
\\ \hline
\end{tabular}
\vspace*{2ex}
\caption{Characterization of functions expressible in prominent standard algebras.}\label{tab:functions}
\end{center}
\end{table}

\subsection{Strategic Games} \label{ssec:strategic_games}

We present some basic notions and results concerning strategic games with finitely many players.
In particular, we review the most fundamental solution concept for such games,
namely that of a \emph{Nash equilibrium}, both for pure  and mixed strategies.
The examples in this subsection are intended to illustrate these concepts
and are taken up in later sections to demonstrate that many well known games can be
represented as logical games (in the sense of Definition~\ref{def:logical_game}). % of Section~\ref{sec:representing}.
We use standard game-theoretic notation and terminology; see, e.g., \cite[Ch.~4--5]{Maschler-Solan-Zamir:GameTheory} or~\cite{Osborne-Rubinstein:CourseGameTheory}.

\begin{definition}\label{def:SG}
A strategic game $\Gcal$ is an ordered triple
\[
\Gcal=\tuple{N, \{S_{i}\mid i\in N\},\{f_{i}\mid i\in N\}},\text{ where:}
\]
\begin{enumerate}
\item $N=\{1,\dotsc,n\}$ is a finite set of \emph{players.}
\item Each $S_{i}\neq \emptyset$ is a \emph{strategy set} of player $i\in N$.
\item Putting $S=S_{1}\times\dotsb \times S_{n}$, each function $f_{i}\colon S\to \Rc$ is called the \emph{payoff function} (or: \emph{utility function}) of player~$i$.
\end{enumerate}
\noindent
Note that we do not restrict the cardinality of the strategy sets $S_i\neq \emptyset$ at this point.
A game $\Gcal$ is called \emph{finite} if each $S_{i}$ is finite.
An ordered $n$-tuple of strategies $\vec{s}=\tuple{s_{1},\dotsc,s_{n}}\in S$ is
called a \emph{strategy profile}.
\end{definition}

Throughout the paper we will adhere to the following conventions:
\begin{itemize}
\item Whenever a symbol is related to a particular player~$i$, then we use the corresponding subscript.
Thus, e.g., the strategies of player~$i$ will typically be denoted by $s_i$, $s'_i$, etc.
If the subscripted symbol is itself a tuple,
then the second index will be written as a superscript
(e.g., $s_i^j$ and $v_i^j$).

\item For every player $i\in N$ and a strategy profile $\vec{s}=\tuple{s_{1},\dotsc,s_{n}}\in S$,
by $\vec{s}_{-i}$ we denote the ordered $(n-1)$-tuple $\tuple{s_{1},\dotsc,s_{i-1},s_{i+1},\dotsc,s_{n}}$.
By $\tuple{s'_i,\vec{s}_{-i}}$ we abbreviate
the strategy profile $\langle s_{1},\allowbreak\dotsc,\allowbreak s_{i-1},\allowbreak s'_i,s_{i+1},\dotsc,s_{n}\rangle$.
The utility of player $i\in N$ under the strategy profile $\tuple{s'_i,\vec{s}_{-i}}$ is written as $f_{i}(s'_i,\vec{s}_{-i})$.
\end{itemize}

The solution concept of a Nash equilibrium captures the idea of stability in the given game.
When all players choose their strategies according to a Nash equilibrium, then neither player can
profit from unilaterally deviating to an alternative strategy.

\begin{definition}
Let $\Gcal$ be a strategic game. A strategy profile $\vec{s}^{*}=\tuple{s^{*}_{1},\dotsc,s^{*}_{n}}\in S$ is a \emph{pure Nash equilibrium} of $\Gcal$ if \[f_{i}(s_i,\vec{s}^{*}_{-i})\leq f_{i}(\vec{s}^{*}),\] for every player $i\in N$ and every strategy $s_i\in S_{i}$.
\end{definition}

Since one of the main aims of the paper is to show that a very wide range of strategic games,
in particular all finite games, can be represented as logical games, we provide
several different examples here that will be taken up again in later sections.
We focus on examples that cannot be directly modeled as either Boolean or
{\L}ukasiewicz games.

\begin{example}[New Technology] \label{ex:adapt}
Suppose that there are three firms sharing a market.
In face of a new technology each firm has to decide whether to adopt it or else to stay put.
We assume that the total value of the market remains unchanged; only the
relative competitiveness of the firms may change in accordance to their decisions.
If only one firm decides to adopt the new technology, it will gain a certain competitive
advantage $c>0$ and each of the other two firms looses $\sfrac{c}{2}$, accordingly.
 If two firms decide to adopt then they split the competitive gain, receiving
$\sfrac{c}{2}$ each, and the third firm has to bear the full loss~$c$. If either
none or all firms adopt the new technology, no firm will gain or loose anything.

The payoff vectors of the resulting 3-player zero-sum game are as follows:
\[
\begin{array}{|cc||c|c|}
\multicolumn{4}{l}{\mbox{{\bf Firm 3:} adopt}} \\
\hline
\hspace*{1.2ex}& & \multicolumn{2}{c|}{\rule[-0mm]{0mm}{4mm}\mbox{{\bf Firm 2}}}  \\
& \hspace*{-4ex}{\raisebox{0ex}[0pt]{{\bf Firm 1}}}
& \mbox{adopt} &  \mbox{stay put} \\
\hline \hline
& \rule[-4mm]{0mm}{10mm}
\mbox{adopt} &   (0,0,0) &  (\sfrac{c}{2},-c,\sfrac{c}{2}) \\
\cline{2-4}
%\raisebox{3.2ex}[0pt]{{\bf Firm 1}}
& \rule[-4mm]{0mm}{10mm}
\mbox{stay put} & (-c,\sfrac{c}{2},\sfrac{c}{2}) & (\sfrac{-c}{2},\sfrac{-c}{2},c) \\
\hline
\end{array}
\quad
\begin{array}{|cc||c|c|}
\multicolumn{4}{l}{\mbox{{\bf Firm 3:} stay put}} \\
\hline
\hspace*{1.2ex}& & \multicolumn{2}{c|}{\rule[-0mm]{0mm}{4mm}\mbox{{\bf Firm 2}}}  \\
& \hspace*{-4ex}{\raisebox{0ex}[0pt]{{\bf Firm 1}}}
& \mbox{adopt} &  \mbox{stay put} \\
\hline \hline
& \rule[-4mm]{0mm}{10mm}
\mbox{adopt} &  (\sfrac{c}{2},\sfrac{c}{2},-c) &  (c,{\sfrac{-c}{2}},{\sfrac{-c}{2}}) \\
\cline{2-4}
%\raisebox{3.2ex}[0pt]{{\bf Firm 1}}
& \rule[-4mm]{0mm}{10mm}
\mbox{stay put} & (\sfrac{-c}{2},c,\sfrac{-c}{2}) & (0,0,0) \\
\hline
\end{array}
\]
It is not hard to see that the only pure Nash equilibrium of the game arises if every firm  adopts
the new technology. (In fact, adopting is a dominating strategy for each firm.)
\end{example}

Examples of games with infinite strategy spaces arise very naturally in many applications;
however, they are highly non-trivial to analyze, in general.

\begin{example}[Vickrey Auction] \label{ex:Vickrey}
As pointed out, e.g., in \cite{Osborne:IntroductionGameTheory,Osborne-Rubinstein:CourseGameTheory}
many types of auctions can be modeled as strategic games under certain assumptions.
Of particular interest is the second-price sealed-bid auction with perfect
information, also called \emph{Vickrey auction,} since, in contrast to more familiar types of auctions,
bidders have an incentive to bid their true value.

Our strategic game representing a Vickrey auction has $n$ players (bidders).
Independently from the others, each player $i\in N=\{1,\dots,n\}$
associates a rational value $p_i\ge0$ 
to the object sold in the auction. This value will be used to define the payoff function and thus is
assumed to be known to all players, as required in all strategic games.
The strategy set $S_i$, i.e.,
the set of possible bids of player~$i$, is identified with the interval $[0,t]$, where
$t$ is some fixed rational maximal bidding value,
greater than~$\max_{i\in N}p_i$.
Each player only knows her own bid $b_i\in [0,t]$. The assumption that values and bids are capped at some fixed $t$ is not restrictive,
as real-life bidders always have finite bankrolls.) 
The object is assigned to the player with the highest bid.
If more than one player has chosen the same highest bid then the object
is assigned to the player with the lowest index among those sharing the same highest bid.
The price to be paid for the object by that player is the highest bid made by
any other player.   This amounts to the following payoff function:\looseness-1
\[
f_i(b_1,\ldots,b_n)=\begin{cases}
	p_i-\max_{j\neq i} b_j & \text{if } i=\min\{j\in N\mid b_j=\max_{k\in N} b_k\} \\
            0 & \text{otherwise.}
	\end{cases}
\]
A Nash equilibrium for this game is $\tuple{b_1,\ldots,b_n}=\tuple{p_1,\ldots,p_n}$, which
means that all players bid their true value. However there are also other
Nash equilibria; for example,
if $p_1>p_2>\dots>p_n$, then
$\tuple{b_1,\ldots,b_n}=\tuple{p_1,0,\ldots,0}$ and
$\tuple{b_1,\ldots,b_n}=\tuple{p_2,p_1,0,\ldots,0}$
are Nash equilibria, too.
\end{example}

\begin{example}[Electoral Competition]\label{ex:EC}
The following simplified model of electoral competition between two candidates appears in \cite{Kroupa-Majer:StrategicReasoning}. Let $N=\{1,2\}$ and $S_1=S_2=[0,1]$. Put:
\[
f_1(s_1,s_2)=\begin{cases}
    0           & \text{if } s_1,s_2\in \{0,1\} \\
    \sfrac 12   & \text{if } s_1=s_2=\sfrac 12\\
    1           & \text{if } s_1=\sfrac 12, s_2\in \{0,1\}
\end{cases}
\]
and define the values $f_1(s_1,s_2)$ elsewhere by a linear interpolation. Put $f_2=1-f_1$.

\begin{figure}
\centering
\includegraphics[scale=0.7]{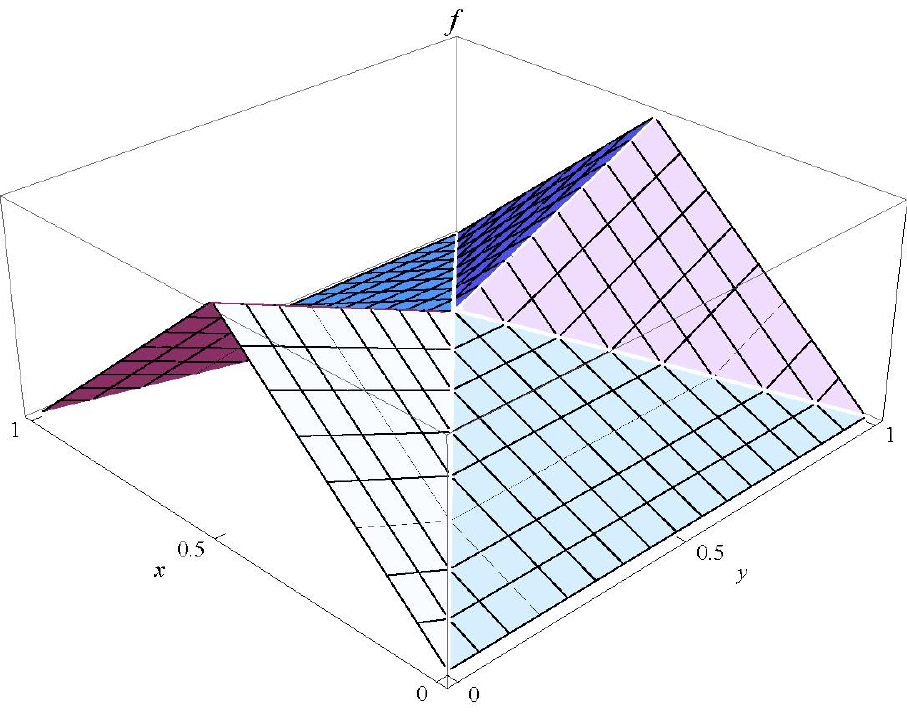}
\caption{\small Payoff function $f_1(s_1,s_2)=f(x,y)$ for player 1 in the simplified electoral competition model.}
\end{figure}

This game captures the following simplified version of Hotelling's electoral competition model \cite{Osborne:IntroductionGameTheory}. Assume that players 1 and 2 are candidates choosing policies $s_1,s_2\in [0,1]$ in order to win the elections. Each citizen has preferences over policies and
votes for either player 1 or 2. In the latest poll, the preferences show that player 1
\begin{itemize}\addtolength\itemsep{-3pt}
\item cannot attract extreme right or extreme left voters at all,
\item is preferred by centrist voters whenever player 2 chooses any of the two extreme policies, and
\item ties with player 2 if both adopt the centrist policy.
\end{itemize}
The values of $f_i$ express the ratio of votes for player $i$ {so that $f_1+f_2=1$}. The unique pure Nash equilibrium in this game is the point $\tuple{\sfrac 12, \sfrac 12}$, representing the simultaneous choice of the centrist policies.
\end{example}

The next example shows that even very simple games may not admit pure Nash equilibria.

\begin{example}[Matching Pennies]\label{ex:MP}
\emph{Matching Pennies} is a game in which  each player $i\in N=\{1,2\}$ secretly selects
one of the sides of a coin.
We may put $S_1=S_2=\{h,t\}$, with $h$ for ``head'' and $t$ for ``tail''.
After their choices are made public, the payoffs
of players 1 and 2 are given by the following table:
\begin{center}
\renewcommand{\arraystretch}{1.3}
\begin{tabular}{|c|c|c|}
	\hline  & $h$ & $t$ \\
	\hline \ $h$\ &\ $(1,-1)$\  &\ $(-1,1)$\ \\
	\hline \ $t$\ &\ $(-1,1)$\ &\ $(1,-1)$\ \\
	\hline
\end{tabular}
\end{center}
It is easy to see the game of \emph{Matching Pennies} has no pure Nash equilibrium.
\end{example}

The previous example motivates the introduction of mixed strategies over the sets $S_i$.
In order to avoid technicalities we confine our attention to mixed strategies in \emph{finite} strategic games (cf.\ Remark~\ref{rem:FinitenessMixed}).

\begin{definition}\label{def:mixed-strategy}
Let $\Gcal=\tuple{N, \{S_{i}\mid i\in N\},\{f_{i}\mid i\in N\}}$ be a finite strategic game. A probability distribution $p_i$ on the strategy set $S_i$ of player $i\in N$ is called a \emph{mixed strategy} of player $i$.
More precisely, $p_i$ is a function $S_i\to [0,1]$ such that $\sum_{s_i\in S_i}p_i(s_i)=1$.
By $\Delta_i$ we denote the set of all mixed strategies of player $i$.

For any \emph{mixed strategy profile} $\vec{p}=\tuple{p_1,\dotsc,p_n}\in \Delta=\Delta_1 \times \dotsb \times \Delta_n$
we set:
\[
    \Ex_i(\vec{p})=\sum_{\vec{s}\in S} \Bigl( f_i(\vec{s}) \cdot\prod_{i\in N}p_i(s_i) \Bigr).
\]
The function $\Ex_i\colon \Delta \to \Rc$ is the \emph{expected payoff (utility)} of player $i\in N$;
its dependence on the payoff function $f_i$ is tacitly understood. A mixed strategy profile $\vec{p}^{*}=\tuple{p^{*}_{1},\dotsc,p^{*}_{n}}\in\Delta$ is a \emph{mixed Nash equilibrium} of $\Gcal$ if \
\begin{equation}\label{def:mixedNE}
    \Ex_{i}(p_i,\vec{p}^{*}_{-i})\leq \Ex_{i}(\vec{p}^{*}),
\end{equation}
for every player $i\in N$ and every mixed strategy $p_i\in \Delta_{i}$.

\end{definition}
\begin{theorem}[Nash]
	Every finite strategic game $\Gcal$ has a mixed Nash equilibrium.
\end{theorem}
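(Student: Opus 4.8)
The plan is to prove Nash's theorem by the standard route via Brouwer's fixed-point theorem, applied to the best-response correspondence recast as a continuous self-map of the space of mixed strategy profiles. The setting is favourable: each strategy set $S_i$ is finite, say $\abs{S_i}=m_i$, so each $\Delta_i$ is the standard simplex in $\Rc^{m_i}$, and $\Delta=\Delta_1\times\dotsb\times\Delta_n$ is a nonempty, compact, convex subset of a Euclidean space. The expected payoff $\Ex_i(\vec{p})$ is multilinear in the coordinates of $\vec{p}$ (it is a sum of products of the $p_j(s_j)$ weighted by the real constants $f_i(\vec{s})$), hence continuous on $\Delta$; and for fixed $\vec{p}_{-i}$ it is an affine function of $p_i$.

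First I would, for each player $i$ and each pure strategy $s_i\in S_i$, define the ``gain from deviating'' function
\[
    g_{i,s_i}(\vec{p})=\max\bigl\{0,\ \Ex_i(s_i,\vec{p}_{-i})-\Ex_i(\vec{p})\bigr\},
\]
where $\Ex_i(s_i,\vec{p}_{-i})$ denotes the expected payoff when player $i$ plays the pure strategy $s_i$ (the point mass at $s_i$) against $\vec{p}_{-i}$. Each $g_{i,s_i}$ is continuous and nonnegative on $\Delta$. I would then define a map $T\colon\Delta\to\Delta$ componentwise on each simplex by
\[
    T(\vec{p})_i(s_i)=\frac{p_i(s_i)+g_{i,s_i}(\vec{p})}{1+\sum_{s_i'\in S_i}g_{i,s_i'}(\vec{p})}.
\]
The numerator is nonnegative and the denominator is a normalizing factor ensuring that $T(\vec{p})_i$ is again a probability distribution, so $T$ is a continuous self-map of the nonempty compact convex set $\Delta$. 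By Brouwer's fixed-point theorem $T$ has a fixed point $\vec{p}^{*}$.

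The remaining step is to verify that every fixed point of $T$ is a mixed Nash equilibrium, i.e.\ satisfies \eqref{def:mixedNE}. The key calculation is standard: since affinity of $\Ex_i$ in $p_i$ gives $\Ex_i(\vec{p}^{*})=\sum_{s_i}p_i^{*}(s_i)\,\Ex_i(s_i,\vec{p}^{*}_{-i})$, the expected payoff is a weighted average of the pure-strategy payoffs, so at least one pure strategy in the support of $p_i^{*}$ yields payoff no greater than $\Ex_i(\vec{p}^{*})$; for that $s_i$ one gets $g_{i,s_i}(\vec{p}^{*})=0$, and feeding this into the fixed-point equation forces $\sum_{s_i'}g_{i,s_i'}(\vec{p}^{*})=0$, whence $g_{i,s_i'}(\vec{p}^{*})=0$ for all $s_i'$. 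This says $\Ex_i(s_i',\vec{p}^{*}_{-i})\le\Ex_i(\vec{p}^{*})$ for every pure strategy, and since any mixed strategy $p_i$ yields an expected payoff that is a convex combination of these pure-strategy payoffs, the inequality \eqref{def:mixedNE} follows for all $p_i\in\Delta_i$.

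The only genuine obstacle is that this argument invokes Brouwer's fixed-point theorem as an external input; everything else is elementary continuity and multilinearity bookkeeping. I expect the main care to go into the fixed-point-implies-equilibrium step, specifically the observation that some support strategy has zero deviation gain, which is what collapses the normalizing denominator to $1$ and yields the equilibrium condition.
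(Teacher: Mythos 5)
Your proof is correct: it is Nash's own 1951 argument via Brouwer's fixed-point theorem, with the gain functions $g_{i,s_i}$ and the normalized update map $T$, and every step you outline (continuity and multilinearity of $\Ex_i$, $T$ mapping $\Delta$ into itself, and the support argument showing that a fixed point forces all deviation gains to vanish) goes through as stated. The paper, however, does not prove this theorem at all: it states it with the attribution ``[Nash]'' and treats it as a classical external result, so there is nothing in the paper to compare your argument against. Your proof is the standard textbook one and is a perfectly acceptable way to discharge the statement, modulo the acknowledged reliance on Brouwer's theorem as an external input.
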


Note that the mixed strategy spaces $\Delta_i$ 
are uncountably infinite and each of them contains the original pure strategy space $S_i$
via the embedding $s_i\in S_i\mapsto \delta_{s_i}\in \Delta_i$, where $\delta_{s_i}$ is the
Dirac probability distribution at $s_i$:
\[
\delta_{s_i}(t_i)=\begin{cases}
    1 & \text{if } t_i=s_i\\
    0 & \text{if } t_i\neq s_i
\end{cases}\quad\text{for each } t_i \in S_i.
\]
For every pure strategy $s_i\in S_i$ let $\tuple{s_i,\vec{p}_{-i}}$ denote
the mixed strategy profile in which the mixed strategy of player $i$ is the Dirac distribution $\delta_{s_i}$ concentrated at $s_i$.
It can be shown that condition~\eqref{def:mixedNE} of Definition~\ref{def:mixed-strategy} only needs to be
checked against the pure strategies of each player:

\begin{proposition}[{\cite[Corollary 5.8]{Maschler-Solan-Zamir:GameTheory}}]\label{p:Games-MixedNE}
For any finite strategic game $\Gcal$ the following are equivalent:
\begin{enumerate}
	\item $\vec{p}^*$ is a mixed Nash equilibrium of $\Gcal$.
	\item For every player $i\in N$ and every pure strategy $s_i\in S_i$,
	\[
	\Ex_i(s_i,\vec{p}^*_{-i})\leq \Ex_i(\vec{p}^*).
	\]
\end{enumerate}	
\end{proposition}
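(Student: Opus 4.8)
The plan is to prove the two implications separately, noting that only the backward direction $(2)\Rightarrow(1)$ requires any work, and that it rests on a single affinity property of the expected payoff. The forward direction $(1)\Rightarrow(2)$ is immediate: under the embedding $s_i\mapsto\delta_{s_i}$ recalled above, every pure strategy $s_i\in S_i$ is a particular mixed strategy, so condition~(2) is merely the special case of~\eqref{def:mixedNE} obtained by letting the deviating mixed strategy $p_i$ range only over the Dirac distributions $\delta_{s_i}$.

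For $(2)\Rightarrow(1)$, the first step I would take is to show that the expected payoff is affine in the deviating player's mixed strategy, i.e.,
\[
\Ex_i(p_i,\vec{p}^*_{-i})=\sum_{s_i\in S_i}p_i(s_i)\,\Ex_i(s_i,\vec{p}^*_{-i}),
\]
where each $\Ex_i(s_i,\vec{p}^*_{-i})$ denotes the expected payoff of the profile in which player~$i$ plays the pure strategy $s_i$ (that is, $\delta_{s_i}$) against the fixed $\vec{p}^*_{-i}$. This identity is obtained from the definition of $\Ex_i$ by splitting the product $\prod_{j\in N}p_j(s_j)$ into the $i$-th factor $p_i(s_i)$ and the remaining factors $\prod_{j\neq i}p^*_j(s_j)$, then regrouping the sum over $\vec{s}\in S$ as an outer sum over $s_i\in S_i$ and an inner sum over $\vec{s}_{-i}$, using $\sum_{\vec{s}_{-i}}f_i(s_i,\vec{s}_{-i})\prod_{j\neq i}p^*_j(s_j)=\Ex_i(s_i,\vec{p}^*_{-i})$.

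The second step is a convexity argument. Since $p_i$ is a probability distribution on the finite set $S_i$, the coefficients $p_i(s_i)$ are nonnegative and sum to~$1$, so the displayed identity exhibits $\Ex_i(p_i,\vec{p}^*_{-i})$ as a convex combination of the numbers $\Ex_i(s_i,\vec{p}^*_{-i})$. Assuming~(2), each of these numbers is bounded above by $\Ex_i(\vec{p}^*)$, and therefore
\[
\Ex_i(p_i,\vec{p}^*_{-i})=\sum_{s_i\in S_i}p_i(s_i)\,\Ex_i(s_i,\vec{p}^*_{-i})\leq\Bigl(\sum_{s_i\in S_i}p_i(s_i)\Bigr)\Ex_i(\vec{p}^*)=\Ex_i(\vec{p}^*).
\]
As $i\in N$ and $p_i\in\Delta_i$ were arbitrary, this is precisely~\eqref{def:mixedNE}, so $\vec{p}^*$ is a mixed Nash equilibrium, establishing~(1).

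The only mildly delicate point---hardly an obstacle---is the bookkeeping in the factorization step: one must check that the product indexed over all of $N$ in the definition of $\Ex_i$ separates cleanly into the single factor depending on $s_i$ and the block depending on $\vec{s}_{-i}$, which is exactly what permits the reindexing of the sum. Finiteness of the game guarantees that all the sums involved are finite, so no convergence considerations enter and the interchange of summations is unproblematic.
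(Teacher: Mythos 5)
Your proof is correct. The paper itself gives no proof of this proposition---it is quoted directly from Maschler--Solan--Zamir as their Corollary~5.8---and your argument (the forward direction via the Dirac embedding of pure strategies, the backward direction via affinity of $\Ex_i$ in player $i$'s own mixed strategy followed by a convex-combination bound) is exactly the standard argument behind that cited result, so there is nothing to correct or compare.
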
	
\begin{example}[Love and Hate]\label{ex:love-hate}
	The countable game called \emph{Love and Hate} was analyzed in \cite{Capraroa-Scarsini:EquilibriaAlgebraicApproach}.
	Here we present its finite variant $\mathcal{LH}$.	It is played by an even number $n=2k$  of players. Let $m$ be an even positive integer. Each strategy space $S_i$ is equal to the set $\left\{0,\tfrac{1}{m},\tfrac{2}{m},\dots,\tfrac{m-1}{m},1\right\}$.	
	Let $h(x,y)=2\cdot \min (\abs{x-y},1-\abs{x-y})$, for every $x,y\in S_i$.
	The payoff functions are defined as follows,
	for every $j=1,\dots,k$ and every strategy profile $\tuple{s_1,\dots,s_n}\in S$:
	\[
	f_{2j-1}(s_1,\dots,s_n)=h(s_{2j-1},s_{2j}), \quad f_{2j}(s_1,\dots,s_n)=1-h(s_{2j},s_{2j+1}).
	\]
It can be shown (and later will be verified in Example~\ref{ex:love-hate-mNE})
	that a mixed
	Nash equilibrium in this game is the $n$-tuple of mixed strategies $\tuple{p_1,\dots,p_n}$ defined for $t_1,r_1, t_3,r_3,\dots,t_{n-1},r_{n-1}\in S_1$ by
	\begin{align*}
	p_1(x)=p_2(x)&=\tfrac{1}{2}\cdot(\delta_{t_1}(x)+\delta_{r_1}(x)),\\
	&\dots \\
	p_{n-1}(x)=p_n(x)&=\tfrac{1}{2}\cdot(\delta_{t_{n-1}}(x)+\delta_{r_{n-1}}(x)),
	\end{align*}
	where $\abs{t_{2j-1}-r_{2j-1}}=\tfrac12$ for each $j\le k$.
\end{example}

\begin{remark}\label{rem:FinitenessMixed}
The assumption of finiteness of strategy spaces in $\Gcal$ makes the ensuing theory much more understandable and technically easier. While it is possible to relax this assumption and define the mixed equilibria as general probability measures for games with infinite strategy sets, many additional assumptions are needed and the existence of a Nash equilibrium in mixed strategies is no longer guaranteed in general---see, e.g., \cite{Dresher:GamesStrategy}. Moreover, probability measures over an infinite universe are not
directly amenable to a logical treatment since they do not admit any finite representation,
in general. The problem of determining and computing the mixed strategies over $[0,1]$
in case of games expressible by formulas in {\L}ukasiewicz logic is studied in \cite{Kroupa-Majer:StrategicReasoning}.
\end{remark}	

It is trivial to observe that any bijective re-labeling of the strategies in a finite game preserves pure Nash equilibria.
Moreover, it holds as well for mixed equilibria, as stated by the next lemma.

\begin{lemma}\label{l:renaming}
Let $\Gcal=\tuple{N, \{S_{i}\mid i\in N\},\{f_{i}\mid i\in N\}}$
and $\Gcal'=\langle N, \{S'_{i}\mid i\in N\},\allowbreak\{f'_{i}\mid \nolinebreak{i\in N}\}\rangle$
be finite strategic games such that, for each $i\in N$, there is a bijection $c_i\colon S_i\to S'_i$ and $f'_i(\vec c(\vec{s}))=f_i(\vec{s})$,
where $\vec{c}(\vec{s})=\vec{c}(s_1,\dotsc,s_n)=\tuple{c_1(s_1),\dotsc,c_n(s_n)}$,
for every $\vec{s}\in S_1\times\dots\times S_n$.
Then the following are equivalent for any mixed strategy profile $\vec{p}=\tuple{p_1,\dotsc,p_n}$ in game $\mathcal{G}$:
\begin{enumerate}
\item $\vec{p}$ is a mixed Nash equilibrium in $\mathcal{G}$.
\item $\vec{c}(\vec{p})=\tuple{p_1\circ c_1^{-1},\dotsc,p_n\circ c_n^{-1}}$ is a mixed Nash equilibrium in $\mathcal{G}'$.
\end{enumerate}
\end{lemma}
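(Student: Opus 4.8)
The plan is to reduce the stated equivalence to a single expected-payoff preservation identity, namely $\Ex_i'(\vec c(\vec p)) = \Ex_i(\vec p)$ for every player $i$, where $\Ex_i'$ denotes the expected payoff in $\Gcal'$, and then to invoke Proposition~\ref{p:Games-MixedNE} so that only pure-strategy deviations need to be compared. First I would check that $\vec c(\vec p)$ is a legitimate mixed strategy profile in $\Gcal'$: since each $c_i$ is a bijection, $p_i\circ c_i^{-1}$ is a probability distribution on $S_i'$, because its values are merely a permutation of those of $p_i$, hence nonnegative and summing to~$1$.

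The heart of the argument is a reindexing of the defining sum for the expected payoff, which is finite as both games are finite. Writing $S'=S_1'\times\dots\times S_n'$ and using that $\vec c\colon S\to S'$ is a bijection, every $\vec s'\in S'$ is uniquely of the form $\vec c(\vec s)$ with $\vec s\in S$. Substituting and using the two hypotheses $f_i'(\vec c(\vec s))=f_i(\vec s)$ and $(p_j\circ c_j^{-1})(c_j(s_j))=p_j(s_j)$ collapses the sum:
\[
\Ex_i'(\vec c(\vec p)) = \sum_{\vec s'\in S'} f_i'(\vec s')\prod_{j\in N}(p_j\circ c_j^{-1})(s_j') = \sum_{\vec s\in S} f_i(\vec s)\prod_{j\in N}p_j(s_j) = \Ex_i(\vec p).
\]

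The remaining observation is that relabeling commutes with Dirac distributions: $\delta_{c_i(s_i)}=\delta_{s_i}\circ c_i^{-1}$, since both assign the value~$1$ exactly to $c_i(s_i)$. Consequently the deviating profile $\tuple{c_i(s_i),\vec c(\vec p)_{-i}}$ equals $\vec c(\tuple{s_i,\vec p_{-i}})$, and applying the identity above to this profile yields $\Ex_i'(c_i(s_i),\vec c(\vec p)_{-i})=\Ex_i(s_i,\vec p_{-i})$. Combining this with $\Ex_i'(\vec c(\vec p))=\Ex_i(\vec p)$, the pure-strategy Nash inequality $\Ex_i'(c_i(s_i),\vec c(\vec p)_{-i})\le\Ex_i'(\vec c(\vec p))$ in $\Gcal'$ is equivalent to $\Ex_i(s_i,\vec p_{-i})\le\Ex_i(\vec p)$ in $\Gcal$. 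Since $c_i$ is onto $S_i'$, the strategy $c_i(s_i)$ ranges over all of $S_i'$ as $s_i$ ranges over $S_i$; hence the family of pure-strategy conditions for $\vec c(\vec p)$ in $\Gcal'$ holds precisely when the corresponding family for $\vec p$ in $\Gcal$ holds.

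By Proposition~\ref{p:Games-MixedNE} this gives the equivalence of statements (1) and (2). I do not expect a genuine obstacle here: the only care required is the bookkeeping of the reindexing and the Dirac identity. In particular, the converse direction needs no separate argument, as the whole set-up is symmetric under replacing each $c_i$ by $c_i^{-1}$, which sends $\vec c(\vec p)$ back to $\vec p$.
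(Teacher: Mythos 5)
Your proof is correct. The paper in fact states this lemma without proof (it is introduced as a "trivial to observe" consequence of relabeling and left to the reader), and your argument — checking that $p_i\circ c_i^{-1}$ is a distribution, reindexing the finite expected-payoff sum along the bijection $\vec c$, noting that $\vec c$ carries Dirac deviations to Dirac deviations, and then reducing to pure-strategy deviations via Proposition~\ref{p:Games-MixedNE} — is exactly the routine verification the authors had in mind, with the surjectivity of each $c_i$ correctly invoked to see that all deviations in $\Gcal'$ are accounted for.
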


\newpage

It can be easily shown that pure Nash equilibria are invariant with respect to order-preserving maps:

\begin{lemma}\label{l:monotoneTrans}
Let $\Gcal=\tuple{N, \{S_{i}\mid i\in N\},\{f_{i}\mid i\in N\}}$ be a strategic game. For every player $i\in N$, let $g_i$ be a real non-decreasing function defined on the range of $f_i$. Then every pure equilibrium in $\Gcal$ is also a pure equilibrium in the game $\hat{\Gcal}=\tuple{N,\{S_{i}\mid i\in N\},\{g_i\circ f_{i}\mid i\in N\}}$.
\end{lemma}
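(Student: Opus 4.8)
The plan is to reduce the claim to the one-directional monotonicity of each $g_i$, applied directly to the defining inequality of a pure Nash equilibrium. No game-theoretic machinery beyond the bare definition is needed, so the argument is essentially a one-liner once unfolded.

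First I would fix a pure Nash equilibrium $\vec{s}^*=\tuple{s^*_1,\dots,s^*_n}$ of $\Gcal$, an arbitrary player $i\in N$, and an arbitrary strategy $s_i\in S_i$. By the definition of a pure Nash equilibrium applied to $\Gcal$, we have $f_i(s_i,\vec{s}^*_{-i})\leq f_i(\vec{s}^*)$. Next I would observe that both $f_i(s_i,\vec{s}^*_{-i})$ and $f_i(\vec{s}^*)$ are values of $f_i$, hence lie in the range of $f_i$, which is exactly the set on which $g_i$ is assumed to be defined. Since $g_i$ is non-decreasing, applying it to both sides preserves $\leq$, yielding $g_i(f_i(s_i,\vec{s}^*_{-i}))\leq g_i(f_i(\vec{s}^*))$, that is, $(g_i\circ f_i)(s_i,\vec{s}^*_{-i})\leq (g_i\circ f_i)(\vec{s}^*)$. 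As $i$ and $s_i$ were arbitrary, this is precisely the condition that $\vec{s}^*$ is a pure Nash equilibrium of $\hat{\Gcal}$.

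There is no real obstacle here; the only point deserving a moment's care is that monotonicity is used in its correct (and weaker) form: we only need $g_i$ to preserve the non-strict order $\leq$ between the two relevant payoff values, so a merely non-decreasing $g_i$ suffices and strict monotonicity is not required. I would also emphasize that the implication is genuinely one-directional and make no attempt at a converse: if some $g_i$ is constant on part of the range of $f_i$, it can collapse distinct payoffs and thereby create new equilibria in $\hat{\Gcal}$ that are not equilibria of $\Gcal$, so the reverse inclusion fails in general.
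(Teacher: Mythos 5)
Your proof is correct and is exactly the routine argument the paper has in mind (the paper omits the proof of Lemma~\ref{l:monotoneTrans} as immediate): apply the non-decreasing $g_i$ to both sides of the equilibrium inequality $f_i(s_i,\vec{s}^*_{-i})\leq f_i(\vec{s}^*)$. Your added remarks on why non-strict monotonicity suffices and why the converse fails are accurate.
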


Mixed Nash equilibria are invariant with respect to positive affine transformations of the payoff
functions---see \cite[Theorem 5.35]{Maschler-Solan-Zamir:GameTheory}, for example.

\begin{lemma}\label{l:affineTrans}
Let $\Gcal=\tuple{N, \{S_{i}\mid i\in N\},\{f_{i}\mid i\in N\}}$ be a finite strategic game. For every player $i\in N$, let $a_i>0$, $b_i\in \Rc$, and $g_i=a_i f_i + b_i$. Let $\hat{\Gcal}=\tuple{N,\{S_i\mid i\in N \}, \{g_i\mid i\in N\}}$. Then every mixed Nash equilibrium of\/ $\Gcal$ is also a mixed Nash equilibrium of~$\hat{\Gcal}$.
\end{lemma}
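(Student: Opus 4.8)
The plan is to reduce the claim to a single affine identity relating the expected payoff functions of the two games. Write $\Ex_i$ for the expected payoff of player~$i$ in $\Gcal$ (computed from $f_i$) and $\hat{\Ex}_i$ for the expected payoff in $\hat\Gcal$ (computed from $g_i$). First I would establish that, for every mixed strategy profile $\vec p\in\Delta$ and every player $i\in N$,
\[
\hat{\Ex}_i(\vec p) = a_i\,\Ex_i(\vec p) + b_i.
\]
This follows by substituting $g_i = a_i f_i + b_i$ into the defining sum of Definition~\ref{def:mixed-strategy} and splitting it by linearity: the $a_i f_i$ part yields $a_i\,\Ex_i(\vec p)$, while the constant part yields $b_i\sum_{\vec s\in S}\prod_{i\in N}p_i(s_i)$.

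The one point that deserves care is the factorization $\sum_{\vec s\in S}\prod_{i\in N}p_i(s_i)=\prod_{i\in N}\bigl(\sum_{s_i\in S_i}p_i(s_i)\bigr)=1$, where the final equality uses that each $p_i$ is a probability distribution on $S_i$. Hence the constant part contributes exactly $b_i$, which gives the displayed identity. Note that the same identity applies verbatim to any profile of the form $\tuple{p_i,\vec p^*_{-i}}$, since these are again elements of $\Delta$.

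With this identity in hand, the equilibrium condition transfers immediately. Suppose $\vec p^*$ is a mixed Nash equilibrium of $\Gcal$ and fix a player $i\in N$. By~\eqref{def:mixedNE} we have $\Ex_i(p_i,\vec p^*_{-i})\le\Ex_i(\vec p^*)$ for every $p_i\in\Delta_i$. Multiplying this inequality by $a_i>0$ preserves its direction, and adding $b_i$ to both sides yields, via the identity just proved,
\[
\hat{\Ex}_i(p_i,\vec p^*_{-i})\le\hat{\Ex}_i(\vec p^*).
\]
Since $i$ and $p_i$ were arbitrary, $\vec p^*$ satisfies the equilibrium condition for $\hat\Gcal$, and is therefore a mixed Nash equilibrium of $\hat\Gcal$.

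There is essentially no obstacle here: the entire content is linearity of expectation together with the strict monotonicity of the map $x\mapsto a_i x + b_i$ for $a_i>0$. The only step requiring a second glance is the product-measure factorization noted above, i.e.\ that the product distribution on $S$ has total mass $1$; everything else is routine algebra. (One could remark in passing that, because $a_i>0$ is invertible, the converse implication holds as well, so the two games in fact share exactly the same mixed equilibria, though the lemma asserts only the stated direction.)
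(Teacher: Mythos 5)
Your proof is correct. The paper does not actually prove Lemma~\ref{l:affineTrans}; it simply cites \cite[Theorem 5.35]{Maschler-Solan-Zamir:GameTheory}, and your argument is precisely the standard one behind that reference: the identity $\hat{\Ex}_i(\vec p)=a_i\,\Ex_i(\vec p)+b_i$ (with the product-distribution normalization $\sum_{\vec s\in S}\prod_{j\in N}p_j(s_j)=1$ correctly justified by factoring the sum over the product set $S$), followed by monotonicity of $x\mapsto a_ix+b_i$ applied to the equilibrium inequality. Nothing is missing, and your parenthetical remark that the converse also holds (so the two games share exactly the same mixed equilibria) is likewise correct.
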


\section{Logical games---representing strategic games}\label{sec:representing}

In this section, we will first (in Subsection~\ref{ss:basic})
formally introduce the notion of a logical game, followed by some useful notational conventions.
In Subsection~\ref{sec:RepreExamples} we define and then illustrate by various examples the
concept of representing a given strategic game as a logical game. Subsection~\ref{expressANDrepre}
discusses expressibility issues. Finally, Subsection~\ref{GeneralRepre} provides a series of general propositions that demonstrate how (wide classes of) finite strategic games can be represented as
logical games at various levels of expressiveness of the underlying algebra.

\subsection{Basic definitions} \label{ss:basic}

We introduce a special kind of strategic games---so-called logical $\alg{A}$-games.
The standard algebra~$\alg{A}$ plays two related roles in the definition of such games:
\begin{itemize}
\item Each player `controls' a set of propositional variables: her strategies are assignments of values from $A$ to
those variables.
\item Each payoff function is expressible by a formula in the language $\lang{L}_\alg{A}$ built from variables controlled by the players; thus
each strategy profile provides the full information needed to evaluate any such `payoff formula'
and the possible payoffs are elements of $A$ as well.
\end{itemize}

\begin{definition} \label{def:logical_game}
A \emph{logical $\alg{A}$-game,}
where \alg A is a standard algebra,
is an ordered tuple
\[
    \Gcal=\tuple{N,V, \{V_i\mid i\in N\},\{S_i\mid i\in N\},\{\f_{i}\mid i\in N\}},\text{ where:}
\]
\begin{enumerate}
\item $N=\{1,\dotsc,n\}$ is a finite set of \emph{players.}
\item $V$ is a finite set of propositional variables.

\item $V_{1},\dotsc,V_n$ are sets of propositional variables forming a partition of $V$.
\item $S_{i}\subseteq A^{V_i}$ is the \emph{strategy set} of player $i\in N$;
    we assume that $S_i$ is non-empty for each $i\in N$.

\item The formula $\f_i$ over variables from $V$ in the language $\lang{L}_\alg{A}$ represents
 the \emph{payoff function} of player $i\in N$; i.e., her payoff
 in the strategy profile $\vec s=\tuple{\vectn{\vec s}} \in S =  S_1\times \dots \times S_n$
 is $e(\f_i)$, in any $\alg{A}$-evaluation $e$
 such that $e(v)=\vec s_j(v)$ for each $j\in N$ and $v\in V_j$.
\end{enumerate}
We say that $\Gcal$ is:
\begin{itemize}
\item \emph{Basic} if $V_i$ is a singleton for each $i\in N$.
\item \emph{Finite} if $S_i$ is finite for each $i\in N$.
\item \emph{Full} if $S_i = A^{V_i}$ for each $i\in N$.
\end{itemize}
Finally, we say that an element $a\in A$ is \emph{$\Gcal$-relevant} if $a = \vec{s}_i(v)$ for some $i\in N$, $v\in V_i$, and $\vec{s}_i \in S_i$; we denote the set of all $\Gcal$-relevant elements of the  carrier set $A$ of~$\alg A$ by $\ES{\alg{A}}{\Gcal}$.
\end{definition}

\newpage

Before introducing the notional conventions and commenting on the definition, we present an example of a logical game that is closely
related to the New Technology game of Example~\ref{ex:adapt}:

\begin{example}\label{ex:adapt-QL}
Let $\mathcal{NT}_{\!\mathbfitL_4^c}$ be a 3-player logical $\mathbfitL_4^c$-game such that:
\begin{itemize}
\item $N=\{1,2,3\}$.
\item $V=\{v_1,v_2,v_3\}$.
\item $V_i=\{v_i\}$ for each $i\in N$.
\item $S_i=\{\{\tuple{v_i,a}\}\mid a\in\{0,1\}\}$ for each $i\in N$
    (i.e., the players can only assign the values $0$ or $1$ to the variable they control).
\item The $\lang L_{\mathbfitL_4^c}$-formulas representing payoffs are as follows
    (see Example~\ref{ex:standard_algebras} for the definitions of the connectives in the algebra $\mathbfitL_4^c$):
    \begin{align*}
        \f_1(v_1,v_2,v_3)&=
            \bigl(\overline{\sfrac12}\oplus\bigl(\overline{\sfrac12}\wedge v_1\bigr)\bigr)\ominus
            \bigl(\bigl(\overline{\sfrac14}\wedge v_2\bigr)\oplus\bigl(\overline{\sfrac14}\wedge v_3\bigr)\bigr)
    \\
        \f_2(v_1,v_2,v_3)&=
            \bigl(\overline{\sfrac12}\oplus\bigl(\overline{\sfrac12}\wedge v_2\bigr)\bigr)\ominus
            \bigl(\bigl(\overline{\sfrac14}\wedge v_1\bigr)\oplus\bigl(\overline{\sfrac14}\wedge v_3\bigr)\bigr)
    \\
        \f_3(v_1,v_2,v_3)&=
            \bigl(\overline{\sfrac12}\oplus\bigl(\overline{\sfrac12}\wedge v_3\bigr)\bigr)\ominus
            \bigl(\bigl(\overline{\sfrac14}\wedge v_1\bigr)\oplus\bigl(\overline{\sfrac14}\wedge v_2\bigr)\bigr)
    \end{align*}
\end{itemize}
Clearly, the game is \emph{basic} (as each player $i$ only controls the single variable~$v_i$),
\emph{finite} (as each player $i$ has only two strategies, namely, $v_i\mapsto0$ and $v_i\mapsto1$),
but \emph{not full} (as the strategies do not exhaust $\{0,\sfrac14,\sfrac12,\sfrac34,1\}^{\{v_i\}}$).
Only the elements $0,1$ are $\mathcal{NT}_{\!\mathbfitL_4^c}$-relevant,
as they are the only elements of $\mathbfitL_4^c$ that can be assigned by the players to the variables they control;
thus, $\ES{\mathbfitL_4^c}{\,\mathcal{NT}_{\!\mathbfitL_4^c}}=\{0,1\}$.

The logical game $\mathcal{NT}_{\!\mathbfitL_4^c}$ is a $\mathbfitL_4^c$-representation of the New Technology game of Example~\ref{ex:adapt},
where each player's strategy $v_i\mapsto1$ represents ``adopt'' and $v_i\mapsto0$ the strategy ``stay put''.
The details of the representation (especially how the payoff formulas correspond to the payoff functions of the strategic game)
will be clarified by Definition~\ref{def:represent} and Example~\ref{ex:adapt-representation} below.
\end{example}

Let us now introduce several notational conventions and identifications that will simplify the further presentation of logical games and formulation of results:
\begin{itemize}
\item Recall that we reserve subscripts for the index of the relevant player;
    a second index, if needed, is written as a superscript.

\item For any $i\in N$, let us enumerate the propositional variables in $V_i$ as $V_i = \{v^1_i, \dots, v_i^{|V_i|}\}$.
The tuple $\tuple{v_{i}^1, \dots, v_{i}^{|V_i|}}$ will be denoted by~$\vec{v}_i$.

\item By Definition~\ref{def:logical_game},\label{pageref:conventions} the strategies of player $i\in N$, or the elements of $S_i$,
are (some) \emph{mappings} from the player's set of controlled variables $V_i$ to $A$.
Thanks to the fixed enumeration of $V_i$, player $i$'s strategies $\vec s_i\in S_i$
can be identified with \emph{tuples} $\tuple{s_i^1,\dots,s_i^{|V_i|}}$ of elements of~$A$,
where $s_i^j=\vec s_i(v_i^j)$, for each $i\in N,j\in\{1,\dots,|V_i|\}$.
Most of the time we will view $i$'s strategies as tuples rather than mappings,
but freely switch between both meanings. (Formally, we just identify $A^{|V_i|}$ and $A^{V_i}$.)

\item Since each player's strategies can be regarded as tuples of elements of $A$,
a strategy profile $\vec s = \tuple{\vectn{\vec{s}}}$ can be viewed as
the tuple of tuples $\langle\tuple{s_1^1,\dots,s_1^{|V_1|}},\dots,\tuple{s_n^1,\dots,s_n^{|V_n|}}\rangle$,
which can in turn be identified with the concatenation of the inner tuples:
\[
    \vec s=\tuple{s_1^1,\dots,s_1^{|V_1|},\dots,s_n^1,\dots,s_n^{|V_n|}},
\]
i.e., a $|V|$-tuple of elements of~$A$.
Simultaneously, the strategy profile can be identified with a mapping from $V$ to~$A$,
assigning to each $v_i^j\in V$ the element $s_i^j\in A$.
That is, each strategy profile can also be regarded as an \emph{evaluation}
(a truth-value assignment) of all propositional variables in $V$.
Again, we will freely switch between these representations of a strategy profile.

\item Similarly, the set $V$ can be regarded as a $|V|$-tuple
\[
    \vec v=\tuple{\vec v_1,\dots,\vec v_n}=\tuple{v_1^1,\dots,v_1^{|V_1|},\dots,v_n^1,\dots,v_n^{|V_n|}}.
\]
This will allow us to write $\f_i(\vec{v})$ to signify that the variables occurring in $\f_i$
are among those in $\vec{v}$ and to write $\f_i^\alg{A}(\vec s)$ for the value of $\f_i$
in the evaluation determined by the strategy profile~$\vec s$.

\item Recall from Lemma~\ref{l:renaming} that we write $\vec{c}(\vec s)$ for $\langle c_1(s^1_1),\dotsc, c_1(s^{|V_1|}_1), c_2(s^{1}_2), \dotsc,c_n(s^{|V_n|}_n)\rangle$. Similarly, we write $\vec e(\vec v)$ for $\tuple{e(v_1^1),\dotsc,e(v_n^{|V_n|})}$.
\end{itemize}

\begin{example}\label{ex:adapt-conventions}
Recall the logical $\mathbfitL_4^c$-game $\mathcal{NT}_{\!\mathbfitL_4^c}$ of Example~\ref{ex:adapt-QL}.
By our conventions, the set $V$ can be regarded as the triple $\vec v=\tuple{v^1_1,v^1_2,v^1_3}$,
or simply $\tuple{v_1,v_2,v_3}$ since the game is basic.
Similarly, each strategy $\vec s_i\in S_i$ can be identified with the 1-tuple (i.e., an element) $s_i^1\in[0,1]$;
thus, due to the limited choice of elements by each player and the fact that the game is basic, we can identify $S_i$ with the set $\{0,1\} \subseteq A$.
Each strategic profile $\vec s\in S$ can thus be viewed as a triple $\tuple{s_1,s_2,s_3}\in \{0,1\}^3 \subseteq A^3$, or as an $\mathbfitL_4^c$-evaluation of the propositional variables $v_1,v_2,v_3$ (by values $0$ or~$1$).
\end{example}

Let us now discuss our definition of a logical \alg A-game.
Definition~\ref{def:logical_game} generalizes three classes of formalized models of strategic games appearing in the literature:
the well-known Boolean games in strategic form of Harrenstein et al.~\cite{Harrenstein-HMW:BooleanGames} are full \alg 2-games; the finite \L ukasiewicz games of~\cite{Marchioni-Wooldridge:LukasiewiczGames,Marchioni-Wooldridge:LukasiewiczGamesTOCL} are
full $\mathbfitL_n$- or $\mathbfitL_n^c$-games;{\footnote{
Marchioni and Wooldridge formulate their results for what in our terminology are full $\mathbfitL_n^c$-games. However, they also show that characteristic functions for all the elements of the domains of these algebras can be
expressed by $\mathbfitL_n$-formulas (see Section~\ref{expressANDrepre} and Definition~\ref{def:express} below).
In this sense also full $\mathbfitL_n$-games are covered
in~\cite{Marchioni-Wooldridge:LukasiewiczGames,Marchioni-Wooldridge:LukasiewiczGamesTOCL}.
}
and the infinite \L ukasiewicz games introduced in~\cite{Marchioni-Wooldridge:LukasiewiczGamesTOCL} are full $[0,1]_{\Q\mathrmL}$-games.

 Our approach, however, is versatile enough to encompass also other types of strategic games, which are not directly captured by the two mentioned subclasses.
In particular, note that whenever the payoff functions in the strategic game are of a type included in Table~\ref{tab:functions} in Section~\ref{ssec:logics}, then we can represent the game
as a logical $\alg{A}$-game, where $\alg{A}$ is the corresponding algebra specified in
the first column of the table.

Furthermore note that if a game $\Gcal$ is full, then  $\ES{\alg{A}}{\Gcal} = A$. However, it should be stressed that in many games $A\neq \ES{\alg{A}}{\Gcal}$: i.e., not all elements of $A$ are available for the players as evaluations for the variables they control. In particular a logical \alg A-game $\Gcal$ can be finite even if the algebra \alg A itself is infinite.
While this may look unintuitive at the first glance, it is actually not too different from the classical case, where we could also assume that strategy sets of the players may be coded as \emph{proper} subsets of real numbers. The main role of $\alg{A}$ is to provide a way to express payoffs as formulas.
The flexibility in modeling strategy sets yields two major advantages over the previous approaches:
\begin{itemize}

\item A much wider class of strategic games can be represented in our framework compared to the previous approaches (for details see Section~\ref{sec:RepreExamples}).

\item We can extend the algebra $\alg{A}$ by adding more elements to
its domain or adding more operations to express further properties of games (see Section \ref{sec:mixedNE}).
Moreover, if we keep
the strategy sets and payoff formulas unchanged, the resulting game will remain essentially the same (as the payoff formulas of the original game are also formulas of the larger game and we have only extended the codomain of their evaluations, but not the evaluations themselves).
\end{itemize}

The following proposition formalizes the second claim using the notion of a \emph{subreduct}:
$\alg{A}$ is a~subreduct of $\alg{B}$ if $A\subseteq B$, $\lang{L}_\alg{A} \subseteq \lang{L}_\alg{B}$, and the operations of $\alg{A}$ are the restrictions of those in $\alg{B}$ to~$A$. (If $\lang{L}_\alg{A} = \lang{L}_\alg{B}$, we speak of a \emph{subalgebra} $\alg{A}$ of $\alg{B}$; if $A= B$, then $\alg{A}$ is called a \emph{fragment} of~$\alg{B}$.)

\pagebreak

\begin{proposition}\label{p:Subreducts1}
Let $\alg{A}$ be a subreduct of $\alg{B}$. Then every logical $\alg{A}$-game is also a logical $\alg{B}$-game.
\end{proposition}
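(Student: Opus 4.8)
The plan is to unwind the definitions and check that every ingredient of a logical $\alg{A}$-game remains meaningful, and is evaluated identically, when we regard it as a logical $\alg{B}$-game. The key observation is that the data defining a logical game---the players $N$, the variable set $V$, its partition $\{V_i\}$, the strategy sets $S_i$, and the payoff formulas $\f_i$---can be taken over verbatim, provided each piece is legal in the $\alg{B}$-setting and the payoffs come out the same. So I would proceed as follows. First, given a logical $\alg{A}$-game $\Gcal = \tuple{N,V,\{V_i\},\{S_i\},\{\f_i\}}$, I define $\Gcal'$ to be the same tuple, now viewed over $\alg{B}$. Conditions (1)--(3) of Definition~\ref{def:logical_game} concern only $N$, $V$, and the partition $\{V_i\}$, so they transfer with no change at all.

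Next I would verify condition (4), that each $S_i$ is a legal strategy set in the $\alg{B}$-game. Since $\alg{A}$ is a subreduct of $\alg{B}$ we have $A\subseteq B$, hence $A^{V_i}\subseteq B^{V_i}$, and therefore $S_i\subseteq A^{V_i}\subseteq B^{V_i}$; nonemptiness of each $S_i$ is inherited directly. For condition (5) I must check that each $\f_i$ is a legal $\lang{L}_\alg{B}$-formula representing the payoff function. Because $\lang{L}_\alg{A}\subseteq\lang{L}_\alg{B}$, every connective occurring in $\f_i$ belongs to $\lang{L}_\alg{B}$, so $\f_i$ is indeed an $\lang{L}_\alg{B}$-formula over the variables in $V$.

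The substantive step---the only one that is not purely bureaucratic---is to confirm that the payoffs computed in $\Gcal'$ coincide with those in $\Gcal$, so that the two games are literally the same strategic game. This is exactly where the hypothesis that the operations of $\alg{A}$ are \emph{restrictions} of those of $\alg{B}$ does the work. Fix a strategy profile $\vec s\in S$ and a player $i$. Let $e$ be the $\alg A$-evaluation it determines (with $e(v)=\vec s_j(v)$ for $v\in V_j$), and let $e'$ be the corresponding $\alg B$-evaluation; since all values $\vec s_j(v)$ lie in $A$, we have $e'(v)=e(v)\in A$ for every $v\in V$. I would then argue by induction on the structure of $\f_i$ that $e'(\f_i)=e(\f_i)$: the base case is immediate for variables and truth constants, and the inductive step uses that for any connective $\circ\in\lang{L}_\alg{A}$ and arguments $a_1,\dots,a_m\in A$ one has $\circ^\alg{B}(a_1,\dots,a_m)=\circ^\alg{A}(a_1,\dots,a_m)\in A$, the defining property of a subreduct. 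Hence the intermediate values stay within $A$ and the two evaluations agree at every subformula, in particular on $\f_i$ itself. Thus $\f_i^\alg{B}(\vec s)=\f_i^\alg{A}(\vec s)$ for every profile, so the payoff functions of $\Gcal'$ and $\Gcal$ are identical, and $\Gcal'$ is a logical $\alg{B}$-game representing the same strategic game as $\Gcal$.

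I do not anticipate any real obstacle here; the content is entirely in setting up the right induction and invoking ``restrictions of operations'' at the inductive step. The only point requiring minor care is to ensure the induction invariant explicitly records that the common value lies in $A$ (not merely in $B$), since this is what guarantees the next application of a connective again falls under the subreduct hypothesis rather than escaping into $B\setminus A$.
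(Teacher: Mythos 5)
Your proof is correct and is precisely the routine verification that the paper omits (the proposition is stated without proof, the authors having announced that such checks are ``routine and consequently left to the reader''). Unwinding Definition~\ref{def:logical_game} clause by clause and establishing by structural induction that every subformula evaluates to the same value in $A$ under both algebras --- with the invariant explicitly keeping the value inside $A$ so the subreduct hypothesis applies at the next connective --- is exactly the intended argument, and you have identified correctly where the ``restrictions of operations'' hypothesis is used.
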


Proposition~\ref{p:Subreducts1} allows us to view all $\alg{A}$-games, for all algebras $\alg{A}$ from Example~\ref{ex:standard_algebras}, as logical $[0,1]_\LPih$-games and use all the expressive power of this logic (see Table~\ref{tab:functions}) to analyze these games (see Section~\ref{sec:mixedNE}, where it will allow us to expressed mixed Nash equilibria of all finite logical games).

The following trivial proposition shows that restricting the set of strategies in \alg A-games preserves pure Nash equilibria, provided they remain available in the restricted game.

\begin{proposition}\label{p:Subreducts2}
Let $\Gcal=\tuple{N,V, \{V_i\mid i\in N\},\{S_i\mid i\in N\},\{\f_{i}\mid i\in N\}}$
and $\Gcal'=\langle N,V, \{V_i\mid {i\in N}\},\{S'_i\mid i\in N\},\{\f_{i}\mid i\in N\}\rangle$
be logical $\alg{A}$-games such that $S_i \subseteq S'_i$ for all $i\in N$.
If the strategy profile $\vec s\in S_1\times\dots\times S_n$ in~\Gcal\ is a pure Nash equilibrium of~$\Gcal'$,
then it is a pure Nash equilibrium of $\Gcal$ as well.
\end{proposition}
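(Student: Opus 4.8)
The plan is to observe that the two games $\Gcal$ and $\Gcal'$ differ only in their strategy sets, while sharing the same players $N$, variables $V$, variable partition $\{V_i\mid i\in N\}$, algebra $\alg{A}$, and payoff formulas $\{\f_i\mid i\in N\}$. Consequently the payoff of player $i$ under any strategy profile $\vec{s}$ is, in both games, the single element $\f_i^\alg{A}(\vec{s})$ obtained by evaluating the very same formula $\f_i$ in the very same algebra; that is, the payoff functions of $\Gcal$ and $\Gcal'$ are literally the same functions, merely restricted to different domains. The whole argument then reduces to the elementary fact that shrinking the set of available unilateral deviations can only make the Nash condition easier to satisfy.

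Concretely, I would first record that $\vec{s}\in S_1\times\dots\times S_n$ holds by hypothesis, so $\vec{s}$ is a legitimate strategy profile of $\Gcal$. Next, since $\vec{s}$ is a pure Nash equilibrium of $\Gcal'$, by definition
\[
    \f_i^\alg{A}(s_i,\vec{s}_{-i}) \leq \f_i^\alg{A}(\vec{s})
\]
holds for every player $i\in N$ and every $s_i\in S'_i$. Because $S_i\subseteq S'_i$, this inequality holds in particular for every $s_i\in S_i$. Since this is exactly the defining condition for $\vec{s}$ to be a pure Nash equilibrium of $\Gcal$, the proof is complete.

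There is essentially no obstacle here: the only point that must be checked with any care is that each deviating profile $\tuple{s_i,\vec{s}_{-i}}$ appearing in the Nash condition for $\Gcal$ is among those quantified over in the condition for $\Gcal'$, which is immediate from $S_i\subseteq S'_i$. I would make explicit the trivial but conceptually central observation that no change of payoff values occurs when passing between the two games, since the formulas $\f_i$ and the algebra $\alg{A}$ are unchanged; this is precisely what licenses transferring the inequalities verbatim from $\Gcal'$ to $\Gcal$.
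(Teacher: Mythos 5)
Your proof is correct and is exactly the argument the paper has in mind: the paper states this proposition as ``trivial'' and omits the proof entirely, precisely because the payoff formulas and algebra are unchanged and the Nash condition for $\Gcal$ quantifies over a subset of the deviations quantified over for $\Gcal'$. Nothing further is needed.
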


Sometimes we can reverse the implication:

\begin{example}\label{ex:LukRationaVSRealGames}
Let $\Gcal=\tuple{N,V, \{V_i\mid i\in N\},\{S_i\mid i\in N\},\{\f_{i}\mid i\in N\}}$
and $\Gcal'=\langle N,V,\allowbreak\{V_i\mid{i\in N}\},\allowbreak\{S'_i\mid{i\in N}\},\allowbreak\{\f_{i}\mid{i\in N}\}\rangle$, where $V = \{\vectn{v}\}$ and for each $i\leq n$: $V_i = V'_i = \{v_i\}$ (i.e., both $\Gcal$ and $\Gcal'$ are basic), $S_i = [0,1]\cap\Q$, and $S'_i = [0,1]$;
i.e., each player $i$ selects the value for the variable $v_i$ from $[0,1]$ in $\Gcal'$ and from $[0,1]\cap\Q$ in \Gcal\ (thus $\Gcal'$ is full, while $\Gcal$ is not). Then every pure Nash equilibrium of $\Gcal$ is a pure Nash equilibrium of $\Gcal'$ as well: see \cite[Prop.~3.6]{Kroupa-Majer:StrategicReasoning}.
\end{example}

\subsection{Representing strategic games---examples}\label{sec:RepreExamples}

As indicated above, logical $\alg{A}$-games can be viewed as special strategic games.
Thus the notions of pure and mixed Nash equilibria are defined for logical games exactly for strategic games, using the payoff functions $\f_i^{\alg A}(\vec s)$.
We have also seen, at the end of Section~\ref{ssec:strategic_games}, that strategic games related to one another by simple transformations share (pure and/or mixed) Nash equilibria.
The following definition spells out what it means to represent a given strategic game by a logical \alg A-game and adapts the corresponding classes of transformations between games to our setting.

\begin{definition}\label{def:represent}
Let $\Gcal=\tuple{N, \{S_{i}\mid i\in N\},\{f_{i}\mid i\in N\}}$ be a strategic game and
let $\alg{A}$ be a standard algebra. We say that $\Gcal$ is
\emph{represented} by a logical $\alg{A}$-game
$\hat{\mathcal{G}}=\langle N,V,\{V_{i}\mid {i\in N}\}, \{S'_{i}\mid i\in N\},\{\f_{i}\mid i\in N\}\rangle$
via $g$ and $\vec c=\tuple{c_i}_{i\in N}$ if:
\begin{enumerate}
\item $g\colon [0,1] \to \Rc$ is a strictly increasing function.
\item $c_i\colon S_i \to S'_i$ is a bijection for each $i\in N$.
\item $f_i(\vec{s}) = g(\phi^\alg{A}_i(\vec c(\vec s)))$ for each $\vec{s}\in S_1\times \dots \times S_n$.
\end{enumerate}
The representation is called \emph{affine} if $g$ is an affine function.
\end{definition}

Using Lemmas~\ref{l:renaming}--\ref{l:affineTrans} we obtain the following basic fact, which shows how the Nash equilibria of a strategic game and its logical representation are related. For a mixed strategy $\vec{p}$ in $\Gcal$ and bijections $c_i$ as above, $\vec{c}(\vec{p})$ denotes the corresponding image of $\vec{p}$ in $\hat{\Gcal}$, just as in Lemma~\ref{l:renaming}.

\begin{lemma}\label{t:Representation}
Let $\Gcal =\tuple{N, \{S_{i}\mid i\in N\},\{f_{i}\mid i\in N\}}$ be a strategic game and let $\hat{\Gcal} = \langle N,V,\{V_{i}\mid {i\in N}\}, \{S'_{i}\mid {i\in N}\},\{\f_{i}\mid {i\in N}\}\rangle$ be a logical $\alg{A}$-game representing $\Gcal$
via $g$ and $\vec c$.
Then:
\begin{enumerate}
\item A strategy profile $\vec{s}^*$ is a pure Nash equilibrium in $\Gcal$ iff $\vec{c}(\vec{s}^*)$ is a pure Nash equilibrium in~$\hat{\Gcal}$.
\item  If $\Gcal$ is finite and the representation is affine, then $\vec{p}^*$ is a mixed Nash equilibrium in $\Gcal$ iff\/ $\vec{c}(\vec{p}^*)$ is a mixed Nash equilibrium in~$\hat{\Gcal}$.
\end{enumerate}
\end{lemma}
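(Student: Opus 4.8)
The plan is to factor the representation into two independent transformations—a bijective re-labeling of the players' strategies via $\vec{c}$, and a transformation of all payoff values via $g$—and then to apply the invariance lemmas of Section~\ref{ssec:strategic_games} to each transformation, in both directions. To this end I would introduce an auxiliary strategic game $\tilde{\Gcal}$ sharing the strategy sets $S'_i$ of the logical game $\hat{\Gcal}$ but equipped with the composite payoffs $g\circ\f_i^\alg{A}$ in place of $\f_i^\alg{A}$. Condition~(3) of Definition~\ref{def:represent} reads $f_i(\vec{s})=g(\f_i^\alg{A}(\vec c(\vec s)))$, which says precisely that $\tilde{\Gcal}$ is the re-labeling of $\Gcal$ along the bijections $c_i$ in the sense required by Lemma~\ref{l:renaming}; thus the step from $\Gcal$ to $\tilde{\Gcal}$ is pure re-labeling, while the step from $\tilde{\Gcal}$ to $\hat{\Gcal}$ alters only payoff values through $g$.

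For part~(1) I would first check, by directly unwinding the definition of a pure Nash equilibrium, that $\vec{s}^*$ is a pure equilibrium of $\Gcal$ if and only if $\vec{c}(\vec{s}^*)$ is a pure equilibrium of $\tilde{\Gcal}$; this is the trivial re-labeling invariance noted just before Lemma~\ref{l:renaming} and needs no finiteness assumption. It then remains to show that $\tilde{\Gcal}$ and $\hat{\Gcal}$ have the same pure equilibria. The forward inclusion follows from Lemma~\ref{l:monotoneTrans} applied to the non-decreasing function $g$, which turns the payoffs $\f_i^\alg{A}$ of $\hat{\Gcal}$ into those of $\tilde{\Gcal}$; the converse follows from the same lemma applied to $g^{-1}$, which is again non-decreasing because $g$ is strictly increasing and which recovers $\f_i^\alg{A}$ from $g\circ\f_i^\alg{A}$. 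Chaining these two equivalences gives~(1).

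Part~(2) runs along identical lines, with the mixed-strategy lemmas replacing their pure counterparts; here the finiteness of $\Gcal$—and hence of $\hat{\Gcal}$ and $\tilde{\Gcal}$, since the $c_i$ are bijections—is exactly what licenses their use. I would invoke Lemma~\ref{l:renaming} to obtain that $\vec{p}^*$ is a mixed equilibrium of $\Gcal$ iff $\vec{c}(\vec{p}^*)$ is a mixed equilibrium of $\tilde{\Gcal}$, and then Lemma~\ref{l:affineTrans} to identify the mixed equilibria of $\tilde{\Gcal}$ and $\hat{\Gcal}$: writing the affine $g$ as $g(x)=a x+b$ with $a>0$ (positivity forced by strict monotonicity), one direction uses the positive affine map $g$ and the other its inverse $g^{-1}(y)=\tfrac{1}{a}(y-b)$, which is again positive affine.

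The only delicate point—and the closest thing to an obstacle—is bidirectionality: Lemmas~\ref{l:monotoneTrans} and~\ref{l:affineTrans} are stated as one-way implications, sending equilibria of one game to equilibria of a transformed game, so to upgrade each to an ``iff'' I must apply it a second time to the inverse transformation. The whole argument therefore rests on the elementary facts that the inverse of a strictly increasing function is strictly increasing (hence non-decreasing) and that the inverse of a positive affine function is positive affine, which place the inverse transformations within the scope of the very same lemmas and so close the loop.
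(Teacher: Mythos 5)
Your proof is correct and takes essentially the same approach as the paper: the paper offers no explicit proof, saying only that the lemma follows ``using Lemmas~\ref{l:renaming}--\ref{l:affineTrans}'', and your factorization through the auxiliary re-labeled game with payoffs $g\circ\f_i^{\alg{A}}$, applying each one-way invariance lemma a second time to the inverse transformation to obtain the biconditionals, is precisely the routine verification the authors leave to the reader.
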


In Section~\ref{GeneralRepre} we state a series of propositions, demonstrating that wide classes of finite games can be (affinely) represented by appropriate logical games of various levels of complexity. Ultimately, Proposition~\ref{prop:vii} shows that for a sufficiently expressive algebra $\alg{A}$, \emph{all} finite games can be represented (though not necessarily affinely) as logical $\alg{A}$-games.
We will see that many of those representations are nevertheless affine and thus preserve even mixed equilibria.
However, the representations that are directly obtained from those propositions are using rather complex formulas, in general.
In the following, we revisit the examples from Section~\ref{ssec:strategic_games}
to indicate that many prominent strategic games can in fact be logically represented in a more compact and natural fashion.

\begin{example}\label{ex:adapt-representation}
Recall the strategic game \emph{New Technology} of Example~\ref{ex:adapt} (let us denote it by~$\mathcal{NT}$)
and the logical $\mathbfitL_4^c$-game $\mathcal{NT}_{\!\mathbfitL_4^c}$ of Example~\ref{ex:adapt-QL}.
Definition~\ref{def:represent} makes precise in which sense $\mathcal{NT}_{\!\mathbfitL_4^c}$ represents $\mathcal{NT}$:
\begin{itemize}
\item There are just two strategies (``adopt'' and ``stay put'') for each player $i$ in $\mathcal{NT}$.
    As hinted in Example~\ref{ex:adapt-QL}, these are encoded by assigning the values $1$ and $0$,
    respectively, to the variable $v_i$ the player controls in $\mathcal{NT}_{\!\mathbfitL_4^c}$.
    This provides the bijection $c_i$ between the two-element strategy sets of player~$i$ in both games.

\item Recall from Example~\ref{ex:adapt-conventions} that strategy profiles in $\mathcal{NT}_{\!\mathbfitL_4^c}$
    can be viewed as triples $\tuple{a_1,a_2,a_3}\in\{0,1\}^3$ evaluating the variables $v_1,v_2,v_3$.
    By the interpretation of the connectives in $\mathbfitL_4^c$ (see Example~\ref{ex:standard_algebras}),
    the payoff formula $\f_1$ of $\mathcal{NT}_{\!\mathbfitL_4^c}$ (defined in Example~\ref{ex:adapt-QL}) evaluates as
    \[
        \f_1^{\mathbfitL_4^c}(a_1,a_2,a_3) = \biggl(\frac12+\frac{a_1}2\biggr)-\biggl(\frac{a_2}4+\frac{a_3}4\biggr)
    \]
    for each $a_1,a_2,a_3\in\{0,1\}$, and similarly for $\f_2$ and $\f_3$.
    Observe that
    the resulting values $\{0,\frac14,\frac12,\frac34,1\}$ of the payoff formulas $\f_i$ in $\mathbfitL_4^c$ can be transformed to the corresponding payoff values
    \[
        f_i(\vec s)\in\{-c,\sfrac{-c}2,0,\sfrac c2,c\}
    \]
    (as defined for each corresponding strategy profile $\vec s$ of $\mathcal{NT}$ in Example~\ref{ex:adapt})
    by the strictly increasing function $g\colon x\mapsto 2c(x-\frac12)$ from $[0,1]$ to~$\Rc$.
    The representation is affine, as $g$ is clearly an affine function.
\end{itemize}
Thus the logical game $\mathcal{NT}_{\!\mathbfitL_4^c}$
represents the finite strategic game \emph{New Technology} affinely,
therefore both games have the same pure and mixed Nash equilibria
(modulo the transformation by $\vec c$ and~$g$).
\end{example}

\begin{example}\label{ex:love-hate-representation}
The finite variant $\mathcal{LH}$ of \emph{Love and Hate} from Example~\ref{ex:love-hate} can be affinely represented as a (full and basic) logical $\mathbfitL_m$-game $\mathcal{LH}_{\mathbfitL_m}$.
First observe that each strategy space $S_i=\left\{0,\tfrac{1}{m},\tfrac{2}{m},\dots,\tfrac{m-1}{m},1\right\}$
is just the universe of the finite {\L}ukasiewicz chain $\mathbfitL_m$.
We can thus take $V_i=\{v_i\}$ and $c_i$ as identity for each $i\in N$.

Using the interpretation of connectives in $\mathbfitL_m$ (see Example~\ref{ex:standard_algebras}),
one can easily verify that $h(x,y)=\eta^{\mathbfitL_m}(x,y)$ for the $\lang L_{\mathbfitL_m}$-formula $\eta$ defined as
	\[
	\eta(x,y)=\bigl(\theta(x,y)\wedge \neg\theta(x,y)\bigr)\oplus\bigl(\theta(x,y)\wedge \neg\theta(x,y)\bigr),
	\]
where $\theta(x,y)=\neg (x\to y) \vee \neg (y\to x)$.
(Observe that $\theta^{\mathbfitL_m}(x,y)=\abs{x-y}$ for every $x,y\in S_i$.)
The payoff functions are thus directly expressible by $\lang L_{\mathbfitL_m}$-formulas $\f_{2j-1}=\eta(v_{2i-1},v_{2i})$ for odd players
and $\f_{2j}=\neg\eta(v_{2j},v_{2j+1})$ for even players.
The representation is affine, as the transformation $g$ is the identity function.
\end{example}

\begin{example}\label{ex:Vickrey-QLD}
Recall the \emph{Vickrey Auction} game from Example \ref{ex:Vickrey}.
This game of $n$ players is determined by values each player associates to the object sold in the auction. Recall that we assume that
the values \vectn p
are non-negative rational numbers and that bids \vectn b of all players are from the interval $[0,t]$ for some rational number $t > \max_{i\le n}p_i$.

Let us consider a $[0,1]^\tri_{\Q\mathrmL}$-game $\mathcal{V\!A}_{\Q\mathrmL}^{\tri}$,
where $N=\{1,2, \dots, n\}$, $V=\{\vectn{v}\}$, and for each $i\in N$, we put $V_i=\{v_i\}$, $S_i=[\frac12,1]$,
and the payoff formulas $\f_i(\vectn v)$ are constructed as follows.

Let $\overline{r_i}=\overline{\textstyle\frac{t+p_i}{2t}}$
be the truth constant corresponding to $\frac{t+p_i}{2t}$
(recall that we assume $t$ and all $p_i$ to be rational).
Let $\vec v=\tuple{\vectn v}$
and define:
\begin{align*}
    \kappa_i(\vec v)&=\bigvee_{j\ne i}v_j
\\  \iota_i(\vec v)&=\tri\biggl(\Bigl(\bigvee_j v_j\Bigr)\rightarrow v_i\biggr)\wedge
        \neg\tri\Bigl(v_i\rightarrow\bigvee_{j<i}v_j\Bigr)
\end{align*}
(if $i=1$, then the empty disjunction $\bigvee_{j<i}v_j$ is understood as~$0$).
The payoff formula $\f_i$ is defined as follows
(for the definition of connectives in the algebra $[0,1]^\tri_{\Q\mathrmL}$ see Example~\ref{ex:standard_algebras}):
\[
    \f_i(\vec v)=
        \biggl(\overline{\textstyle\frac12}\oplus
            \Bigl(\iota_i(\vec v)\wedge\bigl(\overline{r_i}\ominus\kappa_i(\vec v)\bigr)\Bigr)\biggr)\ominus
        \Bigl(\iota_i(\vec v)\wedge\bigl(\kappa_i(\vec v)\ominus\overline{r_i}\bigr)\Bigr).
\]
We can show that $\mathcal{V\!A}_{\Q\mathrmL}^{\tri}$ affinely represents the Vickrey auction game via
$g(x) = 2t(x-\frac12)$ and $c_i(x) = \frac{t+x}{2t}$ for each $i\in N$.

First observe that $\iota_i^{[0,1]^\tri_{\Q\mathrmL}}(\vec v)$ indicates the player who wins the auction.
Indeed, for each $\vectn{b} \in [0,t]$,
\[
\iota_{i}^{[0,1]^\tri_{\Q\mathrmL}}(\vec c(\vec b))
=\begin{cases}
	1 & \text{if $\displaystyle b_i\ge\max_{j\in N}b_j$ and $\displaystyle b_i>\max_{j<i}b_j$}, \\
            0 & \text{otherwise,}
	\end{cases}
\]
where $\vec c(\vec b)$ denotes the tuple $\tuple{c_1(b_1),\dots,c_n(b_n)}$.
Furthermore notice that if $i$ wins the auction, then $\kappa_i$ represents the second highest bid,
i.e., the price to be paid for the auctioned object:
\[
\kappa_{i}^{[0,1]^\tri_{\Q\mathrmL}}(\vec c(\vec b))=
    \max_{j\ne i}c_j(b_j)=\textstyle\frac12+\frac{\max_{j\ne i}b_j}{2t}.
\]
Consequently, by the semantics of the connectives in $[0,1]^\tri_{\Q\mathrmL}$
(see Example~\ref{ex:standard_algebras}), for each $\vectn{b} \in [0,t]$:
\[
\f_{i}^{[0,1]^\tri_{\Q\mathrmL}}(\vec c(\vec b))
=\begin{cases}
	\frac12+\frac{p_i-\max_{j\ne i}b_j}{2t} & \text{if } i=\min\{j\in N\mid b_j=\displaystyle\max_{k\in N} b_k\}; \\
            \frac12 & \text{otherwise.}
	\end{cases}
\]
Thus indeed, $f_i(b_1,\ldots,b_n) = g\bigl(\f_i^{[0,1]^\tri_{\Q\mathrmL}}(\vec c(\vec b))\bigl)$.
\end{example}

\begin{remark}
Marchioni and Wooldridge  \cite{Marchioni-Wooldridge:LukasiewiczGamesTOCL} aim to show that
(a variant of) the second-price
sealed-bid auction with perfect information can be formalized as a {\L}ukasiewicz game.
However, to achieve that goal they explicitly impose some highly problematic restrictions:
(1) all players are assumed to assign the same value to the object in question;
(2) the players can only submit bids that are smaller or equal to the assigned value;
(3) the payoff is set to $0$ for all players if there is a tie at the highest bid.
Since the value assigned to the object is common knowledge (as required in any strategic game),
each of these assumptions trivializes the game.
Jointly these restrictions amount to a game that hardly reflects any essential
feature of the Vickrey auction. In any case, our logical game $\mathcal{V\!A}_{\Q\mathrmL}^{\tri}$
does not impose any of the three mentioned restrictions.
\end{remark}

\begin{example}\label{ex:EC-L}
Recall the \emph{Electoral Competition} model introduced in Example \ref{ex:EC}. It follows directly from the definitions of the two payoffs functions $f_1$ and $f_2$ that both are continuous and piecewise linear (affine), where each of the finitely many linear pieces has only integer coefficients. We have seen in Table~\ref{t:Representation}
that these functions are represented two-variable formulas in infinite-valued \L ukasiewicz logic (see \cite{McNaughton:FunctionalRep,Cignoli-Ottaviano-Mundici:AlgebraicFoundations} for details).
This means that the electoral model is representable as a logical $\alg{A}$-game $\mathcal{EC}_{\mathrmL}$,
where $\alg{A}$  is the standard \MV-algebra (see Example~\ref{ex:standard_algebras}).
\end{example}

\begin{example}\label{ex:MP-2}
Recall the game of \emph{Matching Pennies} from Example \ref{ex:MP}. This game is represented by the Boolean game $\mathcal{MP}_{\!\alg 2}$ specified in Table~\ref{tab:MP}, resulting from the original payoff table by
applying the affine transformation $x\mapsto \tfrac 12 x + \tfrac 12$. Thus, $A=\{0,1\}$ and
we may identify $h$ with $0$ and $t$ with $1$.
The payoff of player~$1$ is then determined by the Boolean function $f_1(v_1,v_2)=1-\abs{v_1-v_2}$, expressible by the classical propositional formula
$\f_1 = (v_1 \wedge \neg v_2) \vee (\neg v_1 \wedge v_2)$;
$f_2$ can be represented either analogously or simply as $\neg \f_1$.
	\begin{table}
		\begin{center}
    \renewcommand{\arraystretch}{1.4}
	\begin{tabular}{|c|c|c|}
		\hline  & $h$ & $t$ \\
		\hline \ $h$\ &\ $(1,0)$\  &\ $(0,1)$\ \\
		\hline \ $t$\ &\ $(0,1)$\ &\ $(1,0)$\ \\
		\hline
	\end{tabular}	
\vspace*{1ex}
\caption{Matching Pennies with transformed payoffs}	
\label{tab:MP}
\end{center}
\end{table}
\end{example}

\subsection{Expressible games}\label{expressANDrepre}

In Sections~\ref{sec:pureNE} and~\ref{sec:mixedNE} we will study the expressibility of
pure and mixed equilibria, respectively, by formulas of the logics in question.
For that purpose we will need an additional condition on logical games.

\begin{definition}\label{def:express}
Let $\alg{A}$ be a standard algebra and $a\in A$.
We say that $\alg{A}$ has:
\begin{itemize}
\item a \emph{(definable) truth constant} for $a$ if there is an $\lang{L}_{\alg{A}}$-formula $\bar a$ such that $e(\bar a) = a$ for every $\alg{A}$-evaluation~$e$;

\item a \emph{pseudo-characteristic formula} for $a$ if there is an $\lang{L}_{\alg{A}}$-formula $\x_a$ over a single variable such that for every $x\in A$ we have $\x^\alg{A}_a(x) = 1$ iff $\nolinebreak{x=a}$;

\item a \emph{characteristic formula} for $a$ if there is an $\lang{L}_{\alg{A}}$-formula $\delta_a$ over a single variable such that for every $x\in A$ we have $\delta^\alg{A}_a(x) = 1$ if $x = a$ and $\delta^\alg{A}_a(x)=0$ otherwise.
\end{itemize}
We say that a logical $\alg{A}$-game $\Gcal$ is:
\begin{itemize}
\item \emph{weakly expressible} if there is a pseudo-characteristic formula for each $a \in \ES{\alg{A}}{\Gcal}$ in $\alg{A}$;
\item \emph{expressible} if there is a truth constant for each $a \in \ES{\alg{A}}{\Gcal}$ in~$\alg{A}$.
\end{itemize}
\end{definition}

Clearly if there is a truth constant $\bar a$ for $a$ in~\alg A, then
$\x_a(p) = (\nolinebreak{p \rightarrow \bar a})\wedge(\nolinebreak{\bar a \rightarrow p})$
is a pseudo-characteristic formula for $a$. Thus all expressible games are weakly expressible.
Also note that if $\lang{L}_{\alg{A}}$ contains the connective $\tri$ (see Example~\ref{ex:standard_algebras}) and $\x_a$ is a pseudo-characteristic formula for $a$ in~\alg A, then $\tri\x_a$ is a characteristic formula for~$a$.

Since the Boolean algebra \alg 2 has truth constants for both of its elements $0$ and~$1$, all Boolean games are expressible. Note that this includes, e.g., the game $\mathcal{MP}_{\!\alg 2}$ of Example~\ref{ex:MP-2}. The analogous claim is no longer true for (finite-valued) \L ukasiewicz games, but well-known results tell us the following:
\begin{lemma}\label{l:Luk-folklore}
~
\begin{enumerate}
\item\label{l:Luk-folklore-prime}
Let $a=\frac mn$, where $m$ and $n$ are relatively prime positive integers with $m\leq n$.  Then there is an $\lang L_{[0,1]_\mathrmL}$-formula $\xi_{m,n}(v)$ such that $\xi_{m,n}^{[0,1]_\mathrmL}(a)=\frac1n$
    and $\xi_{m,n}^{[0,1]_\mathrmL}(x)<\frac1n$ if $x\ne a$.

\item\label{l:Luk-folklore-pseudochar}
    For each rational $a\in [0,1]$ there is a pseudo-characteristic $\lang L_{[0,1]_\mathrmL}$-formula.
\item\label{l:Luk-folklore-rational-no}
    There is neither a characteristic formula nor a definable truth constant in $[0,1]_\mathrmL$ for any rational $a\notin\{0,1\}$.
\item\label{l:Luk-folklore-irrational}
    There is no pseudo-characteristic formula (so, \emph{a fortiori,} no characteristic formula nor a definable truth constant) in $[0,1]_\mathrmL$ for any irrational $a\in[0,1]$.
\item\label{l:Luk-folklore-finite}
Let $n$ be a positive integer and $m \leq n$. Then there is a characteristic formula\label{pageref:char_formula_Ln} for $\frac{m}{n}$ in $\mathbfitL_n$.
\end{enumerate}
\end{lemma}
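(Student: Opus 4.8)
The plan is to reduce every part of the lemma to the McNaughton characterization of the functions expressible in $[0,1]_\mathrmL$ recorded in Table~\ref{tab:functions}: these are exactly the continuous piecewise-linear functions $[0,1]\to[0,1]$ whose finitely many linear pieces have integer coefficients (see \cite{McNaughton:FunctionalRep,Cignoli-Ottaviano-Mundici:AlgebraicFoundations}). Two features of such functions drive the whole argument: they are \emph{continuous}, and their breakpoints are \emph{rational} (the intersection of two integer-coefficient lines has rational abscissa). The positive parts (\ref{l:Luk-folklore-prime}), (\ref{l:Luk-folklore-pseudochar}), (\ref{l:Luk-folklore-finite}) will exhibit suitable McNaughton functions explicitly, while the negative parts (\ref{l:Luk-folklore-rational-no}), (\ref{l:Luk-folklore-irrational}) will derive contradictions from continuity and integrality.

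For (\ref{l:Luk-folklore-prime}) --- which I expect to be the main obstacle --- the idea is to build a ``tent'' of height exactly $\frac1n$ peaking at $a=\frac mn$. Since $\gcd(m,n)=1$, the residue $m$ is invertible modulo $n$, so by a Bézout/congruence argument I can choose positive integers $s_1,s_2$ with $s_1 m\equiv 1$ and $s_2 m\equiv -1 \pmod n$; the lines $s_1x+\frac{1-s_1m}n$ and $-s_2x+\frac{1+s_2m}n$ then have integer coefficients and both pass through $\tuple{\frac mn,\frac1n}$ with positive and negative slope respectively. Their lower envelope, truncated below at $0$, is the continuous integer-coefficient function
\[
    \max\Bigl(0,\ \min\bigl(s_1x+\tfrac{1-s_1m}n,\ -s_2x+\tfrac{1+s_2m}n\bigr)\Bigr),
\]
which equals $\frac1n$ exactly at $x=\frac mn$ and is strictly smaller elsewhere; its expressibility by some $\xi_{m,n}$ follows from the McNaughton characterization. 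The hard point is precisely the use of coprimality to force the apex value to be the \emph{reduced} fraction $\frac1n$ while keeping all slopes and intercepts integral; the boundary case $\frac mn=1$ (forcing $m=n=1$) is handled separately by $\xi_{1,1}=v$.

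Parts (\ref{l:Luk-folklore-pseudochar}) and (\ref{l:Luk-folklore-finite}) will bootstrap from (\ref{l:Luk-folklore-prime}). For (\ref{l:Luk-folklore-pseudochar}) I would form the $n$-fold bounded sum $\xi_{m,n}\oplus\dots\oplus\xi_{m,n}$: at $a$ it evaluates to $\min(n\cdot\tfrac1n,1)=1$, while at any $x\neq a$ the strict inequality $\xi_{m,n}^{[0,1]_\mathrmL}(x)<\tfrac1n$ yields a value $<1$, so the result is pseudo-characteristic (the cases $a\in\{0,1\}$ being covered by $\neg v$ and $v$). For (\ref{l:Luk-folklore-finite}) note that restricting a formula to the subalgebra $\mathbfitL_n\subseteq[0,1]_\mathrmL$ preserves its values on $\{0,\tfrac1n,\dots,1\}$, so the pseudo-characteristic formula from (\ref{l:Luk-folklore-pseudochar}) stays pseudo-characteristic in $\mathbfitL_n$; applying $\tri$, which is definable in $\mathbfitL_n$ (realized there as the $n$-fold strong conjunction, since $y\le\frac{n-1}n$ gives $\underbrace{y\conj\dots\conj y}_{n}=\max(ny-(n-1),0)=0$), then converts ``value $<1$'' into ``value $0$'' and produces the desired characteristic formula, exactly as in the remark following Definition~\ref{def:express}.

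Finally, the negative parts follow directly from the two features above. In (\ref{l:Luk-folklore-rational-no}) a definable truth constant for $a$ would be a formula whose induced function is the \emph{constant} $a$; an integer-coefficient constant McNaughton function can only take an integer value, forcing $a\in\{0,1\}$. A characteristic formula would have to realize the indicator of $\{a\}$, which is discontinuous, contradicting continuity. For (\ref{l:Luk-folklore-irrational}), given irrational $a$ and a putative pseudo-characteristic $\x_a$, the point $a$ cannot be a breakpoint (breakpoints are rational), so $a$ lies in the interior of a single linear piece $\alpha x+\beta$; since the function never exceeds $1$ yet attains $1$ at the interior point $a$, the slope must be $0$ (a nonzero slope would push the value above $1$ on one side of $a$), whence $\x_a^{[0,1]_\mathrmL}=1$ on a whole neighbourhood of $a$, contradicting that $\x_a^{[0,1]_\mathrmL}(x)=1$ only at $x=a$.
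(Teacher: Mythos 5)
Your proposal is correct and follows essentially the same route as the paper: the paper proves part~(\ref{l:Luk-folklore-prime}) by citing the theory of Farey--Schauder hats, and your explicit tent function built from the two integer-coefficient lines through $\tuple{\frac mn,\frac1n}$ (obtained from the invertibility of $m$ modulo $n$) is precisely such a one-dimensional Schauder hat, made self-contained; parts (\ref{l:Luk-folklore-pseudochar})--(\ref{l:Luk-folklore-finite}) then coincide with the paper's argument, since the $n$-fold $\oplus$ of $\xi_{m,n}$, the McNaughton characterization for the negative claims, and the $n$-fold strong conjunction (your realization of $\tri$ in $\mathbfitL_n$) are exactly the devices used there. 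The extra care you take at the endpoints ($a=0$ via $\neg v$, $a=1$ via $v$) and in spelling out why continuity, integrality of constant pieces, and rationality of breakpoints rule out the negative cases only adds detail the paper leaves implicit.
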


\begin{proof}
Claim~\ref{l:Luk-folklore-prime} follows, e.g., from the theory of Farey--Schauder hats as developed in  \cite[Section~3]{Cignoli-Ottaviano-Mundici:AlgebraicFoundations}. To obtain claim~\ref{l:Luk-folklore-pseudochar} for any rational $a=\frac mn$ (where $m,n$ are relatively prime), it is sufficient to take the formula $\bigoplus_{i=1}^n\xi_{m,n}$, where $\xi_{m,n}$ is as in  claim~\ref{l:Luk-folklore-prime}.
Claims~\ref{l:Luk-folklore-rational-no} and~\ref{l:Luk-folklore-irrational} are direct consequences of functional representation of \L ukasiewicz infinite-valued logic (see Table~\ref{tab:functions}).
Finally, in order to obtain claim~\ref{l:Luk-folklore-finite}
it suffices to take the formula
$\bigop{\&}\nolimits_{i=1}^n\x_{\frac{m}{n}}$,
where $\x_{\frac{m}{n}}$ is a pseudo-characteristic formula for ${\frac{m}{n}}$ in $[0,1]_\mathrmL$ (a more involved proof of this fact is provided in \cite[Lemma~3]{Marchioni-Wooldridge:LukasiewiczGames} and in~\cite[Lemma~7.2]{Marchioni-Wooldridge:LukasiewiczGamesTOCL}).
\end{proof}

Thus all logical $\mathbfitL_n$-games (including, e.g., the game $\mathcal{LH}_{\mathbfitL_m}$ of Example~\ref{ex:love-hate-representation})
are weakly expressible and so are all logical $[0,1]_\mathrmL$-games $\Gcal$ such and only such that $\ES{[0,1]_{\mathrmL}}{\Gcal} \subseteq \Q$. Observe that, consequently, the game $\mathcal{EC}_{\mathrmL}$ of Example~\ref{ex:EC-L} is not even weakly expressible (although it would become weakly expressible if restricted to rational payoffs).

Furthermore, recall that the algebras $\mathbfitL_n^c$ and $\alg{G}_n^c$ introduced in Example~\ref{ex:standard_algebras} contain truth constants for all elements of their domains;
consequently, all $\mathbfitL_n^c$- and $\alg{G}_n^c$-games are expressible
(including, e.g., the game $\mathcal{NT}_{\!\mathbfitL_4^c}$ of Example~\ref{ex:adapt-QL}).
Example~\ref{ex:standard_algebras} also introduced several standard algebras containing truth constants $\bar{r}$ for all $r\in[0,1]\cap \mathbb{Q}$: clearly for any such algebra $\alg{A}$ and any \alg A-game $\mathcal G$, if $\ES{\alg{A}}{\Gcal}\subseteq \mathbb{Q}$ then $\Gcal$ is expressible. (Thus, e.g., the game $\mathcal{V\!A}_{\Q\mathrmL}^{\tri}$ of Example~\ref{ex:Vickrey-QLD} is expressible.)

Here we can also illustrate the usefulness of Proposition~\ref{p:Subreducts1}:
Consider a $[0,1]_{\mathrmL}$-game $\Gcal$ such that
$\{0,1\} \subsetneq \ES{[0,1]_{\mathrmL}}{\Gcal} 
\subseteq \Q$. We know that $\Gcal$ is weakly expressible, but not expressible. However, by Proposition~\ref{p:Subreducts1}, we can see $\Gcal$ as a $[0,1]_{\Q\mathrmL}$-game which has the same pure and mixed equilibria and is clearly expressible.

\subsection{Representing strategic games---general results}\label{GeneralRepre}

In the next three propositions we investigate a particularly simple type of games: finite strategic games
with at most two possible payoff values. Note that all finite
win/loose games  fall into this category.
It is easy to show that all such games (modulo certain encodings of strategies)  can be presented as Boolean games (logical $\alg{2}$-games in our terminology). The only possible complication
arises from the fact that the sets of strategies need not be limited to a binary choice.
This implies that each player may have to control more than one variable, in general,
so that  all  of her possible choices can be represented.
This result is implicit already in~\cite{Harrenstein-HMW:BooleanGames};
however, we make it explicit and prove it, as it provides a convenient
preparation for more complex cases elaborated later.

\begin{convention}\label{conv:onStrategies}
By Lemma~\ref{l:renaming} one may identify  without loss of generality any given set of strategies of a finite game with an initial segment of the set of natural numbers. For sake of conciseness we will do so in the rest of the paper.
\end{convention}

\begin{proposition} \label{prop:ab_i}
Let $\Gcal =\tuple{N, \{S_{i}\mid i\in N\},\{f_{i}\mid i\in N\}}$ be a finite
strategic game such that the union of ranges of all $f_i$ is $\{a,b\}$,
where $a < b$.
For any $i\in N$, let $n_i$ denote the least natural number such that $|S_i| \leq 2^{n_i}$.
Moreover, for any $s_i\in S_i$, let $\tuple{s_i^1, \dots, s_i^{n_i}}\in  2^{n_i}$
denote the binary representation of $s_i$. (Recall that by Convention~\ref{conv:onStrategies} we assume that $s_i$ is a natural number).

Then the game $\Gcal$ is affinely represented by an expressible logical $\alg{2}$-game
$\hat{\Gcal}=\langle N,V,\allowbreak\{V_i\mid {i\in N}\},\allowbreak\{S'_{i}\mid{i\in N}\},\{\f_{i}\mid{i\in N}\}\rangle$
via $g$ and $\vec c=\tuple{c_i}_{i\in N}$, where for each $i\in N$:
\begin{itemize}
\item $V_i = \{v^1_i,\dots, v_i^{n_i}\}$ (and $V=\bigcup_{i\in N}V_i$).
\item $S'_i = \{\tuple{s_i^1, \dots, s_i^{n_i}} \mid s_i\in S_i\}$.
    (Recall that by our conventions, strategies can be identified with tuples of elements of the algebra; see beginning of Section~\ref{ss:basic}.)
\item $
    \displaystyle\f_i =  \bigvee\limits_{\substack{\vec{s}\in S_1\times\dots\times S_n\\f_i(\vec{s}) = b}} \bigwedge\limits_{k \in N} \bigwedge\limits_{j \leq n_{k}}\delta_{s^j_{k}}(v_{k}^j)
    $,

   where $\delta_x(v)$ is a characteristic formula of\/ $x\in\{0,1\}$ in \alg 2
    (e.g., $\delta_0(v)=\neg v$ and $\delta_1(v)=v$).

\item $c_i(s_i) = \tuple{s_i^1, \dots, s_i^{n_{i}}}$ for each $s_i\in S_i$.
\item $g(x) = (b-a)x + a$.
\end{itemize}
\end{proposition}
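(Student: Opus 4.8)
The plan is to verify directly the three defining conditions of a representation (Definition~\ref{def:represent}) for the data $g$ and $\vec c=\tuple{c_i}_{i\in N}$ specified in the statement. The first two conditions are immediate. Since $a<b$ we have $b-a>0$, so $g(x)=(b-a)x+a$ is strictly increasing, giving condition~(1). For condition~(2), each $c_i$ is a bijection onto $S'_i$: distinct natural numbers below $2^{n_i}$ have distinct length-$n_i$ binary expansions, so $s_i\mapsto\tuple{s_i^1,\dots,s_i^{n_i}}$ is injective on the initial segment $S_i$, and $S'_i$ is defined to be precisely its image.

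The substance lies in condition~(3), namely $f_i(\vec s)=g(\f_i^{\alg{2}}(\vec c(\vec s)))$ for every profile $\vec s\in S_1\times\dots\times S_n$ and every $i\in N$. The key observation I would isolate is that each disjunct of $\f_i$ is a complete conjunctive description of one single profile. Concretely, under the $\alg{2}$-evaluation determined by $\vec c(\vec s)$, which assigns the bit $s_k^j$ to the variable $v_k^j$, the inner conjunction $\bigwedge_{k\in N}\bigwedge_{j\le n_k}\delta_{t_k^j}(v_k^j)$ equals $1$ if and only if $\delta_{t_k^j}^{\alg{2}}(s_k^j)=1$ for all $k$ and $j$, by the standard-algebra clause of Definition~\ref{def:A} that a conjunction is $1$ iff both conjuncts are. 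Since $\delta_x$ is a characteristic formula in $\alg{2}$, so that $\delta_x^{\alg{2}}(y)=1$ iff $y=x$, this amounts to $s_k^j=t_k^j$ for all $k,j$, which by uniqueness of binary expansions is equivalent to $\vec t=\vec s$.

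From this it follows, using the dual clause of Definition~\ref{def:A} that a disjunction is $1$ iff some disjunct is $1$, that $\f_i^{\alg{2}}(\vec c(\vec s))=1$ precisely when $\vec s$ itself occurs among the indexing profiles, i.e.\ precisely when $f_i(\vec s)=b$, and $\f_i^{\alg{2}}(\vec c(\vec s))=0$ otherwise. Since the range of $f_i$ is contained in $\{a,b\}$, I then compute $g(1)=(b-a)+a=b$ and $g(0)=a$, which yields $f_i(\vec s)=g(\f_i^{\alg{2}}(\vec c(\vec s)))$ in both cases and establishes condition~(3), hence the affine representation. Finally, expressibility of $\hat{\Gcal}$ is automatic: every $\hat{\Gcal}$-relevant element is a bit, so $\ES{\alg{2}}{\hat{\Gcal}}\subseteq\{0,1\}$, and $\alg{2}$ carries the truth constants $\0$ and $\1$ for these.

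I do not expect a genuine obstacle here; the result is essentially a disjunctive-normal-form encoding of the profiles yielding payoff $b$. The only point demanding care is the bookkeeping that keeps the identification of a profile $\vec s$ with its tuple of bits $\vec c(\vec s)$ coherent across the nested conjunction over players $k$ and their bit-positions $j$, together with the uniqueness of binary representation that makes each encoding $c_i$ injective on the initial segment $S_i$.
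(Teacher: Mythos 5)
Your proof is correct and follows essentially the same route as the paper's: verify that $g$ is strictly increasing and each $c_i$ bijective, observe that each inner conjunction $\bigwedge_{k}\bigwedge_{j}\delta_{s_k^j}(v_k^j)$ characterizes exactly the profile $\vec c(\vec s)$, and conclude that $\f_i^{\alg 2}(\vec c(\vec s))=1$ iff $f_i(\vec s)=b$, with $g$ mapping $\{0,1\}$ to $\{a,b\}$. The only difference is that you spell out a few steps (injectivity of the binary encoding, the values $g(0)=a$ and $g(1)=b$) that the paper leaves implicit.
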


\begin{proof}
First observe that $\hat{\Gcal}$, as specified above, is indeed a logical $\alg{2}$-game.
Moreover, $\ES{\alg{2}}{\hat{\Gcal}} = \{0,1\}$,
therefore $\hat{\Gcal}$ is expressible (recall that the requisite truth constants $\0$ and $\1$
are part of $\lang{L}_\alg{2}$).
Clearly each $c_i$ is bijective and $g$ is an affine monotone injection.
It remains to check that for each strategy profile~$\vec{s}$,
$$
    f_i(\vec{s}) = g(\phi^{\alg{2}}_i(\vec c(\vec s))).
$$
It is easy to see that for any strategy profile $\vec{s}$ we have:
$$
    e\Bigl(\bigwedge\limits_{k \in N} \bigwedge\limits_{j \leq n_{k}}\delta_{s^j_{k}}(v_{k}^j)\Bigr) =
\begin{cases}
    1 & \text{if } \tuple{e(v^1_k),\dots, e(v^{n_k}_k)} = \tuple{s^1_k,\dots, s^{n_k}_k} = c_k(s_k)\\
        &\mbox{}\quad\hfill \text{for each $k\in N$ (i.e., $e(\vec{v}) = \vec{c}(\vec{s})$);} \\
    0 & \text{otherwise.}
\end{cases}
$$
Therefore $e(\f_i(\vec{v})) =1 = \f^{\alg{2}}_i(\vec{c}(\vec{s}))$ iff  $f_i(\vec{s}) = b$.
\end{proof}

Next we show that, instead of working with $\sum_{i\in N} n_i$ `binary' variables,
 we could represent such games with just one variable for each player,
but at the price of using a logic with more truth values.

\begin{proposition} \label{prop:ab_ii}
Let $\Gcal =\tuple{N, \{S_{i}\mid i\in N\},\{f_{i}\mid i\in N\}}$ be a finite
strategic game such that the union of ranges of all $f_i$ is $\{a,b\}$,
where $a < b$, and let $m= \max_{i\in N}|S_i| -1$.
Then $\Gcal$ is affinely represented by a \emph{basic} weakly expressible logical $\mathbfitL_m$-game
$\hat{\Gcal} = \langle N,V,\{V_{i}\mid{i\in N}\},\allowbreak\{S'_{i}\mid{i\in N}\},\allowbreak\{\f_{i}\mid{i\in N}\}\rangle$
via $g$ and $\vec c=\tuple{c_i}_{i\in N}$, where for each $i\in N$:
\begin{itemize}
\item $V_i = \{v_i\}$ (thus $V=\{v_1,\dots,v_n\}$).
\item $S'_i = \bigl\{\frac{s_i}{m}\mid s_i\in S_i\bigr\}$.
\item $\displaystyle
    \f_i =  \bigvee\limits_{\substack{\vec{s}\in S_1\times\dots\times S_n\\f_i(\vec{s}) = b}} \bigwedge\limits_{k \in N} \delta_{\frac{s_{k}}{m}}(v_{k}),
    $

where $\delta_x(v)$ is a characteristic formula of $x$ in~$\mathbfitL_m$.
(Recall from Lemma~\ref{l:Luk-folklore} that there is a characteristic formula for each element of~$\mathbfitL_m$.)
\item $c_i(s_i) = \frac{s_i}{m}$ for each $s_i\in S_i$.
\item $g(x) = (b-a)x + a$.
\end{itemize}
\end{proposition}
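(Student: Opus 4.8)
The plan is to mirror the proof of Proposition~\ref{prop:ab_i} almost verbatim, the only structural change being that each player now controls a single $\mathbfitL_m$-valued variable $v_i$ instead of a block of $n_i$ Boolean variables. First I would check that $\hat{\Gcal}$ is a well-defined logical $\mathbfitL_m$-game. Since $m = \max_{i\in N} |S_i| - 1$ and, by Convention~\ref{conv:onStrategies}, each $S_i$ is the initial segment $\{0,1,\dots,|S_i|-1\}$, the map $s_i\mapsto\frac{s_i}{m}$ sends $S_i$ into $\{0,\frac1m,\dots,\frac{|S_i|-1}{m}\}$, and $\frac{|S_i|-1}{m}\le 1$ guarantees that $S'_i\subseteq\mathbfitL_m$. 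Hence each $S'_i$ is a legitimate nonempty strategy set, the game is \emph{basic} because every $V_i=\{v_i\}$ is a singleton, each $c_i$ is clearly a bijection onto $S'_i$, and $g(x)=(b-a)x+a$ is a strictly increasing affine function because $a<b$.

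Next I would address expressibility. The relevant set $\ES{\mathbfitL_m}{\hat{\Gcal}}$ consists of the rationals $\frac{s_i}{m}\in\mathbfitL_m$, and Lemma~\ref{l:Luk-folklore}(\ref{l:Luk-folklore-finite}) supplies a characteristic formula $\delta_{\frac{s_i}{m}}$ for each of them; in particular a pseudo-characteristic formula exists for each relevant element, so $\hat{\Gcal}$ is \emph{weakly expressible}. This is precisely where the finite-valued character of $\mathbfitL_m$ is used, and the same formulas $\delta$ are those appearing in the payoff formula $\f_i$.

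The heart of the argument is condition~3 of Definition~\ref{def:represent}, namely $f_i(\vec s)=g(\f_i^{\mathbfitL_m}(\vec c(\vec s)))$ for every strategy profile $\vec s$. Writing $e$ for the evaluation determined by $\vec c(\vec s)$, so $e(v_k)=\frac{s_k}{m}$, I would first evaluate a single conjunct of $\f_i$: for a profile $\vec s'$ with $f_i(\vec s')=b$, the characteristic property gives $\delta_{\frac{s'_k}{m}}^{\mathbfitL_m}(\frac{s_k}{m})\in\{0,1\}$, equal to $1$ exactly when $s_k=s'_k$. Since $\wedge,\vee$ are interpreted in $\mathbfitL_m$ as the lattice minimum and maximum, $e(\bigwedge_{k\in N}\delta_{\frac{s'_k}{m}}(v_k))$ equals $1$ if $\vec s=\vec s'$ and $0$ otherwise, so the outer disjunction evaluates to $1$ precisely when $\vec s$ itself occurs among the profiles with payoff $b$, and to $0$ otherwise. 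Because the union of the ranges of the $f_i$ is $\{a,b\}$, ``otherwise'' means $f_i(\vec s)=a$; hence $\f_i^{\mathbfitL_m}(\vec c(\vec s))=1$ iff $f_i(\vec s)=b$ and $=0$ iff $f_i(\vec s)=a$. Applying $g$, which sends $1\mapsto b$ and $0\mapsto a$, yields the identity.

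No step is genuinely hard, and the computation is identical in spirit to that of Proposition~\ref{prop:ab_i}; the expected obstacles are only bookkeeping. The two points needing care are (i)~confirming that all values $\frac{s_i}{m}$ really lie in the carrier of $\mathbfitL_m$, which is exactly why the choice $m=\max_{i\in N}|S_i|-1$ is forced, and (ii)~invoking the concrete min/max interpretation of $\wedge,\vee$ in $\mathbfitL_m$ so that a single vanishing conjunct collapses the whole conjunction to $0$, since Definition~\ref{def:A} alone fixes the behaviour of $\wedge,\vee$ only at the top element~$1$. One should also note the degenerate case where some $f_i$ never attains $b$: then the disjunction defining $\f_i$ is empty and is read as the constant $\0$, consistent with $f_i\equiv a$.
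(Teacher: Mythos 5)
Your proof is correct and follows essentially the same route as the paper's, which simply notes that the argument is analogous to Proposition~\ref{prop:ab_i} and records the key observation that $e\bigl(\bigwedge_{k\in N}\delta_{\frac{s_k}{m}}(v_k)\bigr)$ equals $1$ exactly when $e(\vec v)=\vec c(\vec s)$ and $0$ otherwise. Your additional remarks---that $m=\max_i|S_i|-1$ guarantees $S'_i\subseteq\mathbfitL_m$, that $\wedge,\vee$ are genuinely min/max in $\mathbfitL_m$, and that an empty disjunction is read as $\0$---are correct points of care that the paper leaves implicit.
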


\begin{proof}
The proof is analogous to that of Proposition~\ref{prop:ab_i}.
Besides small structural differences---the current game is only weakly expressible and  basic---we only alter (actually simplify) the key observation for any strategy profile $\vec{s}$:
$$
e\Bigl(\bigwedge\limits_{k \in N} \delta_{\frac{s_k}{m}}(v_{k})\Bigr) =
\begin{cases}
 1 & \text{if } \tuple{e(v_1),\dots, e(v_n)} = \bigl\langle\frac{s_1}{m},\dots, \frac{s_n}{m}\bigr\rangle = \vec{c}(\vec{s}); \\
 0 & \text{otherwise.}
\end{cases}
$$
Thus we still have $e(\f_i(\vec{v})) =1 = \f^{\mathbfitL_m}_i(\vec{c}(\vec{s}))$ iff  $f_i(\vec{s}) = b$.
\end{proof}

Of course, one can combine the previous two approaches and work with any number of truth values between $2$ and $\max_{i\in N}|S_i|$ at the price of using more variables, and we can render it in different logics. Let us now formalize these ideas in the next proposition (note that Propositions~\ref{prop:ab_i} and~\ref{prop:ab_ii} are its corollaries).

\begin{proposition}  \label{prop:ab_iii}
Let $\Gcal =\tuple{N, \{S_{i}\mid i\in N\},\{f_{i}\mid i\in N\}}$ be a finite
strategic game such that the union of ranges of all $f_i$ is $\{a,b\}$,
where $a < b$. Let $m\geq 1$ and $n_i$ be the least integer such that $|S_i| \leq {(m+1)^{n_i}}$ and let us, for a given $s_i\in S_i$ denote  the $(m+1)$-ary representation of $s_i$ by $\tuple{s_i^1, \dots, s_i^{n_i}}\in  \{0,\dots,m\}^{n_i}$.

Moreover let $\alg{A}$ be a standard algebra with distinct elements $x_{0}, \dots, x_m\in A$ such that for each $i\leq m$ there is a characteristic formula $\delta_i(v)$ for $x_i$ in~\alg A.

Then $\Gcal$ is affinely represented by a weakly expressible logical $\alg{A}$-game
$\hat{\Gcal} = \langle N,V,\allowbreak\{V_{i}\mid{i\in N}\},\allowbreak\{S'_{i}\mid{i\in N}\},\allowbreak\{\f_{i}\mid{i\in N}\}\rangle$
via $g$ and $\vec c=\tuple{c_i}_{i\in N}$, where for each $i\in N$:
\begin{itemize}
\item $V_i = \{\tuple{v_i^1, \dots, v_i^{n_i}}\}$ (thus $V=\bigcup_{i\in N}V_i$).
\item $S'_i = \bigl\{\bigl\langle x_{s_i^1}, \dots, x_{s_i^{n_i}}\bigr\rangle \mid s_i\in S_i\bigr\}$.
\item $\displaystyle
    \f_i =  \bigvee\limits_{\substack{\vec{s}\in S_1\times\dots\times S_n\\f_i(\vec{s}) = b}} \bigwedge\limits_{k \in N}\bigwedge\limits_{j \leq n_k} \delta_{s^j_{k}}(v^j_{k}).
    $
\item $c_i(s_i) = \bigl\langle x_{s_i^1}, \dots, x_{s_i^{n_i}}\bigr\rangle$ for each $s_i\in S_i$.
\item $g(x) = (b-a)x + a$.
\end{itemize}
Furthermore, the representing game is basic iff $m \geq  \max_{i\in N}|S_i| -1$.
\end{proposition}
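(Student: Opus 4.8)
The plan is to follow the template of the proofs of Propositions~\ref{prop:ab_i} and~\ref{prop:ab_ii}, of which the present statement is a common generalization (one recovers Proposition~\ref{prop:ab_i} by taking $m=1$ and $\alg A=\alg 2$, and Proposition~\ref{prop:ab_ii} by taking $m=\max_{i\in N}|S_i|-1$ and $\alg A=\mathbfitL_m$, which forces every $n_i=1$). First I would check that $\hat{\Gcal}$ is a bona fide logical $\alg A$-game: the sets $V_i=\{v_i^1,\dots,v_i^{n_i}\}$ partition $V$, each $S'_i$ is nonempty, and each $\f_i$ is an $\lang{L}_{\alg{A}}$-formula over the variables in $V$. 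Next I would verify the three clauses of Definition~\ref{def:represent}: that $g(x)=(b-a)x+a$ is affine and strictly increasing (as $b>a$); that each $c_i$ is a bijection $S_i\to S'_i$ --- this uses uniqueness of the $(m+1)$-ary representation together with distinctness of $x_0,\dots,x_m$, so that $s_i\mapsto\tuple{x_{s_i^1},\dots,x_{s_i^{n_i}}}$ is injective and, by the very definition of $S'_i$, surjective onto $S'_i$; and finally the payoff identity $f_i(\vec s)=g(\f_i^\alg A(\vec c(\vec s)))$. Along the way I would observe that $\ES{\alg A}{\hat{\Gcal}}\subseteq\{x_0,\dots,x_m\}$, so weak expressibility is immediate: each $\delta_i$ is a characteristic and hence a pseudo-characteristic formula for $x_i$ (cf.\ the remark after Definition~\ref{def:express}).

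The heart of the argument is the evaluation of the payoff formulas, and it proceeds exactly as in the two special cases. Fix a strategy profile $\vec s$ and the $\alg A$-evaluation $e$ it determines. By the defining property of $\wedge^\alg A$ in Definition~\ref{def:A}, a finite $\wedge$-conjunction evaluates to $1$ iff each conjunct does; combined with the characteristic-formula property that $\delta_{t^j_k}^\alg A(x)=1$ iff $x=x_{t^j_k}$, this gives, for any profile $\vec t$, that $e\bigl(\bigwedge_{k\in N}\bigwedge_{j\le n_k}\delta_{t^j_k}(v^j_k)\bigr)=1$ iff $e(v^j_k)=x_{t^j_k}$ for all $k,j$, i.e.\ iff $\vec e(\vec v)=\vec c(\vec t)$, and equals $0$ otherwise. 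Dually, by the defining property of $\vee^\alg A$, the disjunction $\f_i$ over all profiles $\vec t$ with $f_i(\vec t)=b$ evaluates to $1$ iff at least one such $\vec t$ satisfies $\vec c(\vec t)=\vec e(\vec v)=\vec c(\vec s)$; by injectivity of $\vec c$ this happens iff $f_i(\vec s)=b$. Hence $\f_i^\alg A(\vec c(\vec s))=1$ when $f_i(\vec s)=b$ and $\f_i^\alg A(\vec c(\vec s))=0$ when $f_i(\vec s)=a$, so $f_i(\vec s)=g(\f_i^\alg A(\vec c(\vec s)))$ because $g(1)=b$ and $g(0)=a$.

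For the final clause I would argue directly from the definition of \emph{basic}. The game $\hat{\Gcal}$ is basic iff every $V_i$ is a singleton, i.e.\ iff $n_i=1$ for all $i\in N$. Since $n_i$ is the least integer with $|S_i|\le(m+1)^{n_i}$, we have $n_i=1$ iff $|S_i|\le m+1$; therefore all $n_i=1$ iff $\max_{i\in N}|S_i|\le m+1$, i.e.\ iff $m\ge\max_{i\in N}|S_i|-1$, as claimed.

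I do not expect any genuine obstacle: the computation is routine once the two-index conjunction over $k$ and $j$ is handled, and the only points needing a little care are the injectivity of the encoding $\vec c$ (uniqueness of the $(m+1)$-ary digits together with distinctness of the $x_i$) and the degenerate case in which some $f_i$ never attains $b$, where the defining disjunction is empty and must be read as the constant $0$, consistently matching $f_i\equiv a$ and $g(0)=a$.
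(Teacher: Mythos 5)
Your proposal is correct and follows exactly the route the paper takes: its entire proof of Proposition~\ref{prop:ab_iii} is the one-line remark that it is ``a straightforward combination of the proofs of Propositions~\ref{prop:ab_i} and~\ref{prop:ab_ii},'' and your argument is precisely that combination, spelled out (game well-formedness, bijectivity of $\vec c$ via uniqueness of $(m+1)$-ary digits and distinctness of the $x_i$, the key evaluation of the two-index conjunction, and $g(0)=a$, $g(1)=b$). Your extra observations --- the empty-disjunction degenerate case and the ``basic iff $m\ge\max_i|S_i|-1$'' clause --- are details the paper leaves implicit, and they are handled correctly.
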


\begin{proof}
A straightforward combination of the proofs of Propositions~\ref{prop:ab_i} and~\ref{prop:ab_ii}.
\end{proof}

Now we leave games with binary payoffs and deal with the strategic games with any finite number
$r$ of possible payoff values. To achieve a more digestible presentation,
 we first formulate a result for a fixed infinitely-valued logic and basic games;
 only then a general variant is presented.
On the other hand, the less general version provides an \emph{affine} representation,
which cannot be guaranteed in the general case.
For simplicity we use the logic given by the algebra $[0,1]^\tri_{\Q\mathrm{G}}$
(see Example~\ref{ex:standard_algebras}); indeed in this algebra we have both the corresponding truth constant $\bar{a}$ and a characteristic formula $\delta_a$ for each rational $a$
and thus each logical $[0,1]^\tri_{\Q\mathrm{G}}$-game $\Gcal$ where $\ES{\alg{A}}{\Gcal}\subseteq \mathbb{Q}$ is expressible.
Notice that by Proposition~\ref{p:Subreducts1}, we can as well use the logic of any algebra expanding $[0,1]^\tri_{\Q\mathrm{G}}$, for example $[0,1]^\tri_{\Q\mathrmL}$, $[0,1]^\tri_{\Q\prl}$,
or $[0,1]_\LPih$ (as the connectives of $[0,1]_{\mathrm{G}}$ are definable in all these algebras).

\begin{proposition} \label{prop:vi}
Let $\Gcal =\tuple{N, \{S_{i}\mid i\in N\},\{f_{i}\mid i\in N\}}$ be a finite strategic game
such that the union of the ranges of all $f_i$ is a set of rational numbers $\{o_1,\ldots,o_r\}$, where
$o_1  < o_2 < \dots <o_r$, and let $m = \max_{i\in N}|S_i| -1$.

Then $\Gcal$ is affinely represented by a \emph{basic} expressible logical\/ $[0,1]^\tri_{\Q\mathrm{G}}$-game
$\hat{\Gcal} = \langle N,V,\allowbreak\{V_{i}\mid{i\in N}\},\allowbreak\{S'_{i}\mid{i\in N}\},\allowbreak\{\f_{i}\mid{i\in N}\}\rangle$
via $g$ and $\vec c=\tuple{c_i}_{i\in N}$,
where for each $i\in N$:
\begin{itemize}
\item $V_i = \{v_i\}$ (thus $V=\{v_1,\dots,v_n\}$).
\item $S'_i = \{\frac{s_i}{m}\mid s_i\in S_i\}$.
\item $\displaystyle
    \f_i =  \bigvee\limits_{\vec{s}\in S_1\times\dots\times S_n} \Bigl( \overline{g^{-1}(f_i(\vec{s}))} \wedge \bigwedge\limits_{k \leq n} \delta_{\frac{s_k}{m}}(v_k)\Bigr),$

    where $\delta_{a}(v)$ is a characteristic formula of $a$ in $[0,1]^\tri_{\Q\mathrm{G}}$.

\item $c_i(s_i) =  \frac{s_i}{m}$ for each $s_i\in S_i$.
\item $g(x) = (o_r-o_1)x + o_1$.
\end{itemize}
\end{proposition}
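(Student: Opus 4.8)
The plan is to follow the template of the proofs of Propositions~\ref{prop:ab_i} and~\ref{prop:ab_ii}, the only new ingredient being the use of truth constants to record the (back-transformed) payoff values. Throughout write $\alg{A}=[0,1]^\tri_{\Q\mathrm{G}}$ for brevity. First I would dispatch the structural requirements. The game $\hat{\Gcal}$ is basic by construction, since each $V_i=\{v_i\}$. Because every $\hat{\Gcal}$-relevant element has the form $\frac{s_i}{m}$ with $s_i$ a natural number and $0\le s_i\le|S_i|-1\le m$ (invoking Convention~\ref{conv:onStrategies}), we get $\ES{\alg{A}}{\hat{\Gcal}}\subseteq[0,1]\cap\Q$; as $\alg{A}$ carries a truth constant for every rational in $[0,1]$, the game is expressible. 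The maps $c_i(s_i)=\frac{s_i}{m}$ are injective, hence bijections onto $S'_i$, and $g(x)=(o_r-o_1)x+o_1$ is affine and strictly increasing because $o_1<o_r$. Its inverse $g^{-1}(y)=\frac{y-o_1}{o_r-o_1}$ maps $[o_1,o_r]$ onto $[0,1]$ and sends rationals to rationals, so each constant $\overline{g^{-1}(f_i(\vec{s}))}$ occurring in $\f_i$ is legitimate and $\hat{\Gcal}$ is a genuine logical $\alg{A}$-game.

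The heart of the argument is the representation identity $f_i(\vec{s})=g(\f_i^{\alg{A}}(\vec{c}(\vec{s})))$ from clause~3 of Definition~\ref{def:represent}. Fix a profile $\vec{s}^{*}=\tuple{s_1^{*},\dots,s_n^{*}}$ and let $e$ be the evaluation with $e(v_k)=\frac{s_k^{*}}{m}=c_k(s_k^{*})$. As in the earlier proofs, now reusing the characteristic formulas $\delta_a$ available in $\alg{A}$, the decisive observation is that for each profile $\vec{s}$ ranging over the disjunction,
\[
    e\Bigl(\bigwedge_{k\le n}\delta_{\frac{s_k}{m}}(v_k)\Bigr)=
    \begin{cases}
        1 & \text{if } \vec{s}=\vec{s}^{*},\\
        0 & \text{otherwise,}
    \end{cases}
\]
because $\delta_{\frac{s_k}{m}}^{\alg{A}}(\frac{s_k^{*}}{m})=1$ exactly when $s_k=s_k^{*}$ and $=0$ otherwise, so a single mismatched coordinate forces the meet to $0$.

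It then remains to read off the value of $\f_i$ using the Gödel interpretation of $\wedge$ and $\vee$ as minimum and maximum. For the unique profile $\vec{s}=\vec{s}^{*}$ the corresponding disjunct evaluates to $g^{-1}(f_i(\vec{s}^{*}))\wedge 1=g^{-1}(f_i(\vec{s}^{*}))$, while every other disjunct evaluates to $\overline{g^{-1}(f_i(\vec{s}))}^{\alg{A}}\wedge 0=0$. Since $f_i(\vec{s}^{*})\ge o_1$ yields $g^{-1}(f_i(\vec{s}^{*}))\ge 0$, the join (maximum) of all disjuncts is exactly $g^{-1}(f_i(\vec{s}^{*}))$. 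Hence $\f_i^{\alg{A}}(\vec{c}(\vec{s}^{*}))=g^{-1}(f_i(\vec{s}^{*}))$ and, applying $g$, $g(\f_i^{\alg{A}}(\vec{c}(\vec{s}^{*})))=f_i(\vec{s}^{*})$, as required.

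The proof presents no serious obstacle; the only point needing a little care is the verification that the Gödel minimum and maximum genuinely implement a ``lookup table''. This rests on the two facts isolated above: the matched disjunct already carries the right value $g^{-1}(f_i(\vec{s}^{*}))$ (and meeting with $1$ leaves it unchanged, as it lies in $[0,1]$), while all competing disjuncts are annihilated to $0$ by the characteristic formulas, so that taking the supremum can introduce no spurious larger value. Everything else is routine and, in keeping with the paper's stated policy, could be left to the reader.
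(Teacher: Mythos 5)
Your proof is correct and follows essentially the same route as the paper's: verify the structural clauses (rationality of $g^{-1}(f_i(\vec{s}))$, expressibility via the rational truth constants, bijectivity of $c_i$, affinity of $g$), then establish the single key evaluation identity showing that each disjunct of $\f_i$ returns $g^{-1}(f_i(\vec{s}))$ on the matching profile and $0$ otherwise, so that the G\"odel join picks out exactly the intended value. Your explicit remark that the matched value is nonnegative (so the maximum cannot be spoiled) is a small detail the paper leaves implicit, but the argument is the same.
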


\begin{proof}
First observe that $\hat{\Gcal}$ is indeed a logical $[0,1]^\tri_{\Q\mathrm{G}}$-game: notice that
$g^{-1}(f_i(\vec{s}))$ is a rational number, as $f_i(\vec{s})$ is rational and $g\colon[0,1]\to\Rc$ is an affine function with rational coefficients; thus there is a corresponding truth constant and $\f_i$ is indeed an $\lang{L}_{[0,1]^\tri_{\Q\mathrm{G}}}$-formula.
Moreover, $\ES{[0,1]^\tri_{\Q\mathrm{G}}}{\hat{\Gcal}} \subseteq [0,1]\cap\Q$ and therefore $\hat{\Gcal}$ is expressible (recall that all rational truth constants are present in the language $\lang{L}_{[0,1]^\tri_{\Q\mathrm{G}}}$). Clearly,
each $c_i$ is a bijection and $g$ is an affine strictly increasing function with rational coefficients. It remains to check that for each strategy profile~$\vec{s}$,
$$
f_i(\vec{s}) = g\bigl(\phi^{[0,1]^\tri_{\Q\mathrm{G}}}_i(\vec c(\vec s))\bigr).
$$
Observe that for any strategy profile $\vec{s}$ we have:
$$
e\Bigl(\overline{g^{-1}(f_i(\vec{s}))} \wedge\bigwedge\limits_{k \leq n} \delta_{\frac{s_k}{m}}(v_k)\Bigr) =
\begin{cases}
 g^{-1}(f_i(\vec{s})) & \text{if } \tuple{e(v_1),\dots, e(v_n)} = \tuple{\frac{s_1}{m},\dots, \frac{s_n}{m}}= \vec{c}(\vec{s}); \\
 0 & \text{otherwise.}
\end{cases}
$$
Therefore we have $\f^{[0,1]^\tri_{\Q\mathrm{G}}}_i(\vec{c}(\vec{s})) = g^{-1}(f_i(\vec{s}))$, as required.
\end{proof}

Furthermore, finite strategic games with rational payoffs can also be affinely represented by logical $\alg A$-games for sufficiently expressive \emph{finite} algebras~\alg A. For simplicity, the following proposition is formulated for finite standard G-algebras with truth constants and $\tri$ (see Example~\ref{ex:standard_algebras}); i.e., $\alg A=\alg G_m^{c\,\tri}$ for sufficiently large $m$. However, Proposition~\ref{p:Subreducts1} again ensures that all expansions of $\alg G_m^{c\,\tri}$ can be used as well (notice that in particular, $\mathbfitL_m^c$ falls within this class, since all connectives of $\alg G_m^{c\,\tri}$ are definable in~$\mathbfitL_m^c$).

\begin{proposition} \label{prop:vi-Gmc}
Let $\Gcal =\tuple{N, \{S_{i}\mid i\in N\},\{f_{i}\mid i\in N\}}$ be a finite strategic game
such that the union of the ranges of all $f_i$ is a set of rational numbers
$\bigl\{\frac{p_1}q,\ldots,\frac{p_r}q\bigr\}$, where $q, p_1, \dots p_r$ are integers and
$p_1  < p_2 < \dots <p_r$. Let $m$ be a natural number such that $m \ge \max\{p_r-p_1,|S_1|,\dots,|S_n|\}-1$.

Then $\Gcal$ is affinely represented by a basic expressible logical\/ $\alg G_m^{c\,\tri}$-game
$\hat{\Gcal} = \langle N,V,\allowbreak\{V_{i}\mid{i\in N}\},\allowbreak\{S'_{i}\mid{i\in N}\},\allowbreak\{\f_{i}\mid{i\in N}\}\rangle$
via $g$ and $\vec c=\tuple{c_i}_{i\in N}$,
where for each $i\in N$:
\begin{itemize}
\item $V_i$, $S'_i$, $\f_i$, and $c_i$ are defined as in Proposition~\ref{prop:vi},
    using $\delta_{a}(v)=\tri((v\rightarrow\bar a)\wedge(\bar a\rightarrow v))$.
\item $g(x) = (mx+p_1)/q $. 
\end{itemize}
\end{proposition}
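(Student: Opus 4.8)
The plan is to run the argument of Proposition~\ref{prop:vi} essentially verbatim, isolating the two places where the passage from the infinite algebra $[0,1]^\tri_{\Q\mathrm{G}}$ to the finite algebra $\alg G_m^{c\,\tri}$ forces us to invoke the hypotheses on $m$. The only genuinely new ingredient is the modified slope of $g$: whereas Proposition~\ref{prop:vi} used slope $o_r-o_1=\frac{p_r-p_1}{q}$, here we use slope $\frac{m}{q}$, and this choice is dictated precisely by the need to keep all the required truth constants inside the finite domain $\{0,\frac1m,\dots,1\}$.

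First I would check that $\hat\Gcal$ is a well-formed logical $\alg G_m^{c\,\tri}$-game. The formula $\delta_a(v)=\tri((v\rightarrow\bar a)\wedge(\bar a\rightarrow v))$ is a characteristic formula for $a$: by the defining clauses of a standard algebra (Definition~\ref{def:A}), $(v\rightarrow\bar a)\wedge(\bar a\rightarrow v)$ evaluates to $1$ exactly when $v\le a$ and $a\le v$, i.e.\ when $v=a$, and $\tri$ then collapses this to the Boolean value $1$ (if $v=a$) and $0$ (otherwise); here the presence of $\tri$ and of the truth constant $\bar a$ in $\alg G_m^{c\,\tri}$ is used. The strategies $\frac{s_i}{m}$ with $s_i\in S_i=\{0,\dots,|S_i|-1\}$ lie in the domain of $\alg G_m$ because $m\ge|S_i|-1$, so that $\frac{s_i}{m}$ is of the form $\frac km$ with $0\le k\le m$; since these are the only $\hat\Gcal$-relevant elements, $\ES{\alg G_m^{c\,\tri}}{\hat\Gcal}\subseteq\{0,\frac1m,\dots,1\}$, so $\hat\Gcal$ is basic (each $V_i$ is a singleton) and expressible (all truth constants $\overline{k/m}$ belong to $\lang L_{\alg G_m^{c}}$). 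Finally $c_i\colon s_i\mapsto\frac{s_i}{m}$ is a bijection onto $S'_i$, and $g(x)=(mx+p_1)/q$ is an affine, strictly increasing (assuming, as we may, $q>0$) function $[0,1]\to\Rc$.

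The crux is to verify that the truth constants $\overline{g^{-1}(f_i(\vec s))}$ occurring in $\f_i$ actually exist in $\alg G_m^{c}$. Computing the inverse of the new $g$ gives $g^{-1}(y)=\frac{qy-p_1}{m}$, so for a payoff value $f_i(\vec s)=\frac{p_j}{q}$ we obtain $g^{-1}(f_i(\vec s))=\frac{p_j-p_1}{m}$. Since $p_1\le p_j\le p_r$ and, by hypothesis, $p_r-p_1\le m$, this number is of the form $\frac{k}{m}$ with $0\le k\le m$, hence an element of $\alg G_m$ carrying an available truth constant. This is exactly the step that would fail for the naive slope $o_r-o_1$, and is the reason for choosing slope $\frac{m}{q}$; it is the main (and essentially only) obstacle, and it is precisely where the bound $m\ge p_r-p_1$ enters.

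With well-formedness established, the remaining computation is identical to that of Proposition~\ref{prop:vi}. For a strategy profile $\vec t$ and the evaluation $e$ with $e(\vec v)=\vec c(\vec t)$, the inner conjunction $\bigwedge_{k\le n}\delta_{s_k/m}(v_k)$ takes the Boolean value $1$ iff $\vec t=\vec s$ and $0$ otherwise, so each disjunct $\overline{g^{-1}(f_i(\vec s))}\wedge\bigwedge_{k\le n}\delta_{s_k/m}(v_k)$ evaluates to $g^{-1}(f_i(\vec s))$ on its own profile (using $x\wedge^{\alg A}1=x$ for the G\"odel meet, since $g^{-1}(f_i(\vec s))\in[0,1]$) and to $0$ on every other profile. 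Taking the G\"odel join over all profiles, exactly one disjunct survives, giving $\f_i^{\alg G_m^{c\,\tri}}(\vec c(\vec t))=g^{-1}(f_i(\vec t))$, i.e.\ $f_i(\vec t)=g(\f_i^{\alg G_m^{c\,\tri}}(\vec c(\vec t)))$, which is condition~(3) of Definition~\ref{def:represent}. As $g$ is affine, the representation is affine, completing the proof.
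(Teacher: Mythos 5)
Your proof is correct and matches the paper's intent exactly: the paper omits this proof, stating only that it is essentially the same as that of Proposition~\ref{prop:vi}, and your write-up is precisely that adaptation, correctly isolating the two places where the finiteness of $\alg G_m^{c\,\tri}$ and the modified slope of $g$ matter (the strategies $\frac{s_i}{m}$ landing in the domain, and the constants $\overline{g^{-1}(f_i(\vec s))}=\overline{(p_j-p_1)/m}$ being available). One small caveat: you invoke ``$p_r-p_1\le m$ by hypothesis,'' whereas the hypothesis as literally stated only gives $m\ge\max\{p_r-p_1,|S_1|,\dots,|S_n|\}-1$, i.e.\ possibly $m=p_r-p_1-1$; since $m\ge p_r-p_1$ is indeed what the argument needs, this is an off-by-one in the paper's statement (compare the cleaner bound in Proposition~\ref{prop:vi-Lm}) rather than a gap in your proof.
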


The proof of Proposition~\ref{prop:vi-Gmc} is essentially the same as the proof of Proposition~\ref{prop:vi},
therefore we omit it.
As shown by the next Proposition~\ref{prop:vi-Lm}, the presence of truth constants can be avoided in standard MV-chains $\mathbfitL_m$ of suitable lengths, at the price of having a slightly larger algebra and only \emph{weak} expressibility of the representing game. The formalizability of the payoff function in $\mathbfitL_m$ is based on the following lemma:

\begin{lemma}\label{l:Lm-zeta}
Let $m$ be a prime number, $a,b\in\mathbfitL_m$ and $a\notin\{0,1\}$.
Then there is an $\lang L_{\mathbfitL_m}$-formula $\zeta_{m,a,b}(v)$
such that $\zeta_{m,a,b}^{\mathbfitL_m}(a)=b$.
\end{lemma}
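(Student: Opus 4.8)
The plan is to recast the statement in purely algebraic terms and reduce it to the subalgebra structure of $\mathbfitL_m$. First I would observe that, as $\zeta$ ranges over all one-variable $\lang{L}_{\mathbfitL_m}$-formulas, the value $\zeta^{\mathbfitL_m}(a)$ ranges over exactly the subalgebra $\langle a\rangle$ of $\mathbfitL_m$ generated by the single element $a$: the formula $v$ yields $a$, the nullary connectives $\0,\1$ yield $0,1$, and every connective application corresponds to closing under the operations of $\mathbfitL_m$. Hence the lemma is equivalent to the claim that $\langle a\rangle=\mathbfitL_m$ whenever $a\notin\{0,1\}$, for then $b\in\langle a\rangle$ and any term witnessing this membership may be taken as $\zeta_{m,a,b}$.

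Next I would invoke the (standard) description of the subalgebras of the finite \L ukasiewicz chain: a subset is a subalgebra of $\mathbfitL_m$ iff it is of the form $\mathbfitL_d=\{0,\tfrac1d,\dots,1\}$ for some divisor $d$ of $m$ (realized inside $\mathbfitL_m$ as the multiples of $\tfrac1d=\tfrac{m/d}{m}$). This is where the primality of $m$ enters decisively: the only divisors of a prime $m$ are $1$ and $m$, so the only subalgebras of $\mathbfitL_m$ are the two-element Boolean algebra $\{0,1\}$ and $\mathbfitL_m$ itself. Since $a\in\langle a\rangle$ but $a\notin\{0,1\}$, the subalgebra $\langle a\rangle$ cannot be $\{0,1\}$, and therefore $\langle a\rangle=\mathbfitL_m$, which completes the argument.

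I expect the main point requiring care to be the justification that $\langle a\rangle=\mathbfitL_m$, i.e.\ that $\tfrac1m\in\langle a\rangle$; the subalgebra classification packages this, but it is instructive to see it directly. Writing $a=\tfrac{k}{m}$ with $1\le k\le m-1$, primality gives $\gcd(k,m)=1$. Using that $\langle a\rangle$ contains $0$ and $1$ and is closed under $x\mapsto x\oplus a=\min(x+\tfrac{k}{m},1)$ and $x\mapsto x\ominus a=\max(x-\tfrac{k}{m},0)$, one runs the subtractive Euclidean algorithm on the integer set $\{i: \tfrac{i}{m}\in\langle a\rangle\}\subseteq\{0,\dots,m\}$: this set contains $0,m,k$ and is closed under $i\mapsto\min(i+k,m)$ and $i\mapsto\max(i-k,0)$, so it contains $\gcd(k,m)=1$, i.e.\ $\tfrac1m\in\langle a\rangle$. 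Closing again under $\oplus$ then yields every $\tfrac{j}{m}$, so $\langle a\rangle=\mathbfitL_m$ and in particular $b\in\langle a\rangle$. Once $\tfrac1m$ is available, exhibiting an explicit term $\zeta_{m,a,b}$ with $\zeta_{m,a,b}^{\mathbfitL_m}(a)=b$ is routine and can be left implicit.
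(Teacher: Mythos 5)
Your main argument is correct, but it takes a genuinely different route from the paper's. The paper proves this constructively: by Lemma~\ref{l:Luk-folklore}(\ref{l:Luk-folklore-prime}) (the Farey--Schauder hat construction), primality of $m$ gives a formula $\xi_{p,m}$ with $\xi_{p,m}^{[0,1]_\mathrmL}(\tfrac pm)=\tfrac1m$; restricting to the subalgebra $\mathbfitL_m$ and taking $\bigoplus_{i=1}^q\xi_{p,m}$ yields $\zeta_{m,a,b}$ explicitly. You instead reduce the statement to the fact that the one-generated subalgebra $\langle a\rangle$ must be all of $\mathbfitL_m$, via the classification of subalgebras of finite MV-chains by divisors of $m$. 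That identification of term values at $a$ with $\langle a\rangle$ is sound, the classification is a standard fact, and primality then does exactly the work you describe; so the lemma follows. What you lose relative to the paper is an explicit term (the paper reuses machinery it has already set up and produces a concrete formula that then appears in the payoff formulas of Proposition~\ref{prop:vi-Lm}); what you gain is a shorter, more conceptual argument that makes transparent \emph{why} primality is the right hypothesis.

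One step of your supplementary ``direct verification'' is, however, wrong as stated. A set containing $0$, $k$, $m$ and closed only under the \emph{unary} maps $i\mapsto\min(i+k,m)$ and $i\mapsto\max(i-k,0)$ need not contain $\gcd(k,m)$: for $m=7$, $k=4$ the closure of $\{0,4,7\}$ under these two maps is $\{0,3,4,7\}$, which misses $1$. The subtractive Euclidean algorithm requires subtracting the \emph{new} remainders, not just $k$. The repair is to use closure under the binary operations on arbitrary pairs of elements of $\langle a\rangle$: since $i\oplus j=\min(i+j,m)$ and $i\odot j=\max(i+j-m,0)$ are both available (on numerators), one of them always realizes $(i+j)\bmod m$, so the set of numerators is closed under addition modulo $m$ and hence contains the cyclic subgroup of $\mathbb{Z}_m$ generated by $k$, i.e.\ all multiples of $\gcd(k,m)$. (In the example: $3\oplus 3=6$ and $7-6=1$.) Since you present this paragraph only as a sanity check on top of the subalgebra classification, the proof as a whole survives, but the inference ``so it contains $\gcd(k,m)$'' should be fixed or dropped.
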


\begin{proof}
Let $a=\frac pm$ and $b=\frac qm$.
Since $m$ is prime, by Lemma~\ref{l:Luk-folklore}(\ref{l:Luk-folklore-prime}) there is a formula $\xi_{p,m}(v)$ such that $\xi_{p,m}^{[0,1]_{\mathrmL}}\bigl(\frac pm\bigr)=\frac1m$.
Since $\mathbfitL_m$ is a subalgebra of $[0,1]_{\mathrmL}$, we obtain $\xi_{p,m}^{\mathbfitL_m}\bigl(\frac pm\bigr)=\frac1m$ as well;
thus it is sufficient to take $\bigoplus_{i=1}^q\xi_{p,m}$ for $\zeta_{m,a,b}$.
\end{proof}

\begin{proposition} \label{prop:vi-Lm}
Let $\Gcal =\tuple{N, \{S_{i}\mid i\in N\},\{f_{i}\mid i\in N\}}$ be a finite strategic game
such that the union of the ranges of all $f_i$ is a set of rational numbers
$\bigl\{\frac{p_1}q,\ldots,\frac{p_r}q\bigr\}$, where $q, p_1, \dots p_r$ are integers and
$p_1  < p_2 < \dots <p_r$. Let $m$ be a prime number such that
$m \ge \max\bigl(p_r-p_1,|S_1|+1,\dots,|S_n|+1\bigr)$.

Then $\Gcal$ is affinely represented by a basic weakly expressible logical\/ $\mathbfitL_m$-game
$\hat{\Gcal} = \langle N,V,\allowbreak\{V_{i}\mid{i\in N}\},\allowbreak\{S'_{i}\mid{i\in N}\},\allowbreak\{\f_{i}\mid{i\in N}\}\rangle$
via $g$ and $\vec c=\tuple{c_i}_{i\in N}$,
where for each $i\in N$:
\begin{itemize}
\item $V_i = \{v_i\}$ (thus $V=\{v_1,\dots,v_n\}$).
\item $S'_i = \{\frac{s_i+1}{m}\mid s_i\in S_i\}$.
\item $\displaystyle
    \f_i =  \bigvee\limits_{\vec{s}\in S_1\times\dots\times S_n} \Bigl(
    \zeta_{m,\frac{s_i+1}m,g^{-1}(f_i(\vec{s}))}(v_i)
    \wedge \bigwedge\limits_{k \leq n} \delta_{\frac{s_k+1}{m}}(v_k)\Bigr),$

    where $\zeta_{m,a,b}(v)$ is the formula from Lemma~\ref{l:Lm-zeta}
    and $\delta_{a}(v)$ is a characteristic formula of $a$ in~$\mathbfitL_m$.

\item $c_i(s_i) =  \frac{s_i+1}{m}$ for each $s_i\in S_i$.
\item $g(x) = (mx+p_1)/q $. 
\end{itemize}
\end{proposition}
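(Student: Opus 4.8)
The plan is to mimic the proof of Proposition~\ref{prop:vi} almost line for line, the single substantive change being that the rational truth constant $\overline{g^{-1}(f_i(\vec s))}$ used there is unavailable in $\mathbfitL_m$ and must be simulated by the formula $\zeta_{m,\frac{s_i+1}m,\,g^{-1}(f_i(\vec s))}(v_i)$ furnished by Lemma~\ref{l:Lm-zeta}. First I would check that $\hat{\Gcal}$ is a well-defined logical $\mathbfitL_m$-game, which requires two verifications peculiar to this setting. \emph{(a)} Every strategy value lies in $\mathbfitL_m\setminus\{0,1\}$: by Convention~\ref{conv:onStrategies} the strategies of player $i$ form the initial segment $\{0,\dots,|S_i|-1\}$, so $1\le s_i+1\le|S_i|\le m-1$ and hence $\frac{s_i+1}m\in(0,1)$; this is precisely the hypothesis $a\notin\{0,1\}$ needed to invoke Lemma~\ref{l:Lm-zeta}, and it also guarantees a characteristic formula $\delta_{\frac{s_i+1}m}$ by Lemma~\ref{l:Luk-folklore}(\ref{l:Luk-folklore-finite}). \emph{(b)} The target value is an element of $\mathbfitL_m$: writing $f_i(\vec s)=\frac{p_j}q$ and inverting $g(x)=(mx+p_1)/q$ gives $g^{-1}(f_i(\vec s))=\frac{p_j-p_1}m$, and since $0\le p_j-p_1\le p_r-p_1\le m$ this lies in $\{0,\frac1m,\dots,1\}=\mathbfitL_m$, so $\zeta_{m,\frac{s_i+1}m,\,g^{-1}(f_i(\vec s))}$ is indeed available.

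Next I would record the easy structural points: each $c_i$ is a bijection onto $S'_i$, the game is basic (all $V_i$ singletons), $g$ is affine and strictly increasing (taking $q>0$, as forced by the agreement of $\frac{p_1}q<\dots<\frac{p_r}q$ with $p_1<\dots<p_r$), and $\hat{\Gcal}$ is weakly expressible since the relevant values $\frac{s_i+1}m$ carry pseudo-characteristic formulas by Lemma~\ref{l:Luk-folklore}(\ref{l:Luk-folklore-pseudochar}). The heart of the argument is then the single evaluation identity, exactly as in Proposition~\ref{prop:vi}: fixing a profile $\vec t$ and the evaluation $e$ with $e(\vec v)=\vec c(\vec t)$, I would show for each disjunct indexed by $\vec s$ that
\[
e\Bigl(\zeta_{m,\frac{s_i+1}m,\,g^{-1}(f_i(\vec s))}(v_i)\wedge\bigwedge_{k\le n}\delta_{\frac{s_k+1}m}(v_k)\Bigr)=
\begin{cases}
g^{-1}(f_i(\vec t)) & \text{if } \vec s=\vec t,\\
0 & \text{otherwise.}
\end{cases}
\]
When $\vec s\neq\vec t$ the characteristic-formula conjunction already evaluates to $0$, and meeting with the $\zeta$-value still yields $0$; when $\vec s=\vec t$ the conjunction is $1$ while $\zeta^{\mathbfitL_m}$ evaluated at $\frac{t_i+1}m$ returns exactly $g^{-1}(f_i(\vec t))\le1$, so the meet equals $g^{-1}(f_i(\vec t))$. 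Taking the join over all $\vec s$ leaves only the surviving term, giving $\f_i^{\mathbfitL_m}(\vec c(\vec t))=g^{-1}(f_i(\vec t))$, i.e.\ $f_i(\vec t)=g\bigl(\f_i^{\mathbfitL_m}(\vec c(\vec t))\bigr)$, which is condition~(3) of Definition~\ref{def:represent}.

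The only genuinely new obstacle relative to Proposition~\ref{prop:vi} is the absence of truth constants for intermediate values in $\mathbfitL_m$, and it is exactly this that dictates the two design choices making the computation go through: the shift from $\frac{s_i}m$ to $\frac{s_i+1}m$ keeps every strategy value strictly inside $(0,1)$ as Lemma~\ref{l:Lm-zeta} demands, and the primality of $m$ ensures $\gcd(s_i+1,m)=1$ for $s_i+1<m$, so that the Farey--Schauder construction behind Lemma~\ref{l:Luk-folklore}(\ref{l:Luk-folklore-prime}), and hence $\zeta$, applies. Once these are in place the calculation is routine; the degenerate subcase $g^{-1}(f_i(\vec s))=0$ is harmless, since there $\zeta_{m,a,0}$ is the empty $\oplus$ (namely $\0$) and correctly returns $0$.
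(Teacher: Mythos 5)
Your proposal is correct and follows exactly the route the paper intends: the paper's own proof is a one-line remark that the argument is analogous to Proposition~\ref{prop:vi} with the truth constant $\overline{g^{-1}(f_i(\vec s))}$ replaced by the formula $\zeta_{m,\frac{s_i+1}m,g^{-1}(f_i(\vec s))}(v_i)$ from Lemma~\ref{l:Lm-zeta}, which is precisely the substitution you carry out. Your additional verifications --- that the shift to $\frac{s_i+1}m$ keeps strategy values in $(0,1)$ under the hypothesis $m\ge|S_i|+1$, that $g^{-1}(f_i(\vec s))=\frac{p_j-p_1}m\in\mathbfitL_m$ because $m\ge p_r-p_1$, and that primality of $m$ makes $\zeta$ available --- are exactly the details the paper leaves implicit.
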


The proof of Proposition~\ref{prop:vi-Lm} is analogous to that of Proposition~\ref{prop:vi}
(just observe that by Lemma~\ref{l:Lm-zeta},
$\zeta_{m,\frac{s_i+1}m,g^{-1}(f_i(\vec{s}))}^{\mathbfitL_m}(v_i)=g^{-1}(f_i(\vec{s}))$).

\begin{example}\label{ex:adapt-finite}
By Propositions~\ref{prop:vi}--\ref{prop:vi-Lm},
the strategic game $\mathcal{NT}$ of Example~\ref{ex:adapt} can be affinely represented not only as the logical $\mathbfitL_4^c$-game $\mathcal{NT}_{\!\mathbfitL_4^c}$ of Examples~\ref{ex:adapt-QL} and~\ref{ex:adapt-representation},
but also, e.g., as a logical $[0,1]^\tri_{\Q\mathrm{G}}$-game %$\mathcal{NT}^\tri_{\Q\mathrm{G}}$
(by Proposition~\ref{prop:vi}),
a logical $\alg G_4^{c\,\tri}$-game 
(by Proposition~\ref{prop:vi-Gmc}),
or a logical $\mathbfitL_7$-game 
(by Proposition~\ref{prop:vi-Lm}).
Proposition~\ref{p:Subreducts1} and the variability of $m$ in Propositions~\ref{prop:vi-Gmc} and~\ref{prop:vi-Lm} admit further algebras for logical representation of $\mathcal{NT}$, including, e.g., $[0,1]_{\Q\mathrmL}$, $[0,1]_\LPih$, $\mathbfitL_5^c$, $\mathbfitL_6^c$, $\mathbfitL_{11}$, $\mathbfitL_{13}$, etc.
In all these cases, the representation is affine and the representing games are basic and expressible
(or weakly expressible in the case of $\mathbfitL_m$-games).

Notice that while the payoff formulas produced by Propositions~\ref{prop:vi}--\ref{prop:vi-Lm} are rather large, a much more compact logical representation of $\mathcal{NT}$ exists in algebras that contain $\mathbfitL_4^c$ as a subreduct: see the formulas $\f_i$ in Example~\ref{ex:adapt-QL}.
Notice also that despite the representability as an $\mathbfitL_4^c$- or $\mathbfitL_7$-game,
$\mathcal{NT}$ cannot be represented as a finite {\L}ukasiewicz game of Marchioni and Wooldridge, since the sets of strategies and payoff values have different cardinalities.
\end{example}

As  already mentioned above, the following general version comes at the price of
loosing the affinity of representations.

\begin{proposition} \label{prop:vii}
Let $\Gcal =\tuple{N, \{S_{i}\mid i\in N\},\{f_{i}\mid i\in N\}}$ be a finite game such that the union of ranges of all $f_i$ is $\{o_1,\ldots,o_r\}$, where $o_1  < o_2 < \dots <o_r$, and let $m = \max_{i\in N} |S_i| -1 $. Let $\alg{A}$ be an arbitrary standard algebra with distinct elements $a_0, \dots, a_{m-1}$ and distinct elements $b_1< \dots < b_r$ (the two sets can overlap, though) and such that there are characteristic formulas $\delta_i$ in \alg A for each $a_i$, $i < m$, and (definable) truth constants $\bar b_i$ for each $b_i$, $i\leq r$.

Then $\Gcal$ is represented by a \emph{basic} weakly expressible $\alg{A}$-logical game
$\hat{\Gcal} = \langle N,V,\allowbreak\{V_{i}\mid i\in N\},\allowbreak\{S'_{i}\mid i\in N\},\allowbreak\{\f_{i}\mid i\in N\}\rangle$
via $g$ and $\vec c=\tuple{c_i}_{i\in N}$,
where for each $i\in N$:
\begin{itemize}
\item $V_i = \{v_i\}$  (thus $V=\{v_1,\dots,v_n\}$).
\item $S'_i = \{a_{s_i}\mid s_i\in S_i\}$.
\item $\displaystyle
    \f_i =  \bigvee\limits_{\vec{s}\in S_1\times\dots\times S_n} \Bigl( \overline{g^{-1}(f_i(\vec{s}))} \wedge \bigwedge\limits_{k \leq n} \delta_{s_k}(v_k)\Bigr)
    $.
\item $c_i(s_i) =  a_{s_i}$  for each $s_i\in S_i$.
\item $g\colon[0,1]\to\Rc$ is a strictly increasing function with $g(b_j) = o_j$ for every $j\leq r$.
\end{itemize}
\end{proposition}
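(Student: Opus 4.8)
The plan is to follow the blueprint of the proof of Proposition~\ref{prop:vi} almost verbatim, the only genuinely new ingredients being the weaker hypotheses on $\alg{A}$ and the correspondingly weaker (non-affine) choice of $g$. First I would check that $\hat{\Gcal}$ is a well-defined logical $\alg{A}$-game. Since each $f_i(\vec{s})$ lies in $\{o_1,\dots,o_r\}$, say $f_i(\vec{s})=o_j$, and since the sought $g$ will satisfy $g(b_j)=o_j$, we have $g^{-1}(f_i(\vec{s}))=b_j$; hence the truth constant $\overline{g^{-1}(f_i(\vec{s}))}=\bar b_j$ is available by hypothesis and each $\f_i$ is a genuine $\lang{L}_{\alg{A}}$-formula. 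The game is basic by construction ($V_i=\{v_i\}$), and it is weakly expressible because its relevant elements are among the distinct codes $a_{s_i}$ (one per strategy value $s_i\in\{0,\dots,m\}$, so the indexing in the statement should run up to $a_m$), each of which carries a characteristic formula, and every characteristic formula is in particular pseudo-characteristic (Definition~\ref{def:express}). Finally each $c_i\colon s_i\mapsto a_{s_i}$ is a bijection onto $S'_i$ since the codes are distinct.

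Next I would exhibit $g$. Because $b_1<\dots<b_r$ and $o_1<\dots<o_r$ are two strictly increasing finite sequences of reals, there is a strictly increasing $g\colon[0,1]\to\Rc$ with $g(b_j)=o_j$ for all $j\le r$ (for instance, interpolate linearly between consecutive points $\tuple{b_j,o_j}$ and extend monotonically beyond $b_1$ and $b_r$). In general the $o_j$ are not an affine image of the $b_j$, so $g$ cannot be taken affine; this is exactly why only pure equilibria are preserved, which suffices, since by Lemma~\ref{t:Representation}(1) verifying the three clauses of Definition~\ref{def:represent} already yields the desired correspondence of pure Nash equilibria between $\Gcal$ and $\hat{\Gcal}$.

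It then remains to verify clause~(3) of Definition~\ref{def:represent}, namely $f_i(\vec{s})=g(\f_i^{\alg{A}}(\vec{c}(\vec{s})))$ for every profile $\vec{s}$. Fix $\vec{s}$ and let $e$ be the evaluation with $e(v_k)=a_{s_k}$. For a profile $\vec{s}'$ occurring in the disjunction defining $\f_i$, the conjunct $\delta_{s'_k}(v_k)$ evaluates to $\delta_{s'_k}^{\alg{A}}(a_{s_k})$, which is $1$ if $s'_k=s_k$ and $0$ otherwise, by distinctness of the codes and the defining property of characteristic formulas. Hence $\bigwedge_{k\le n}\delta_{s'_k}(v_k)$ evaluates to $1$ exactly when $\vec{s}'=\vec{s}$ and, for $\vec{s}'\neq\vec{s}$, has at least one factor equal to $0$. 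I would then argue, as in Proposition~\ref{prop:vi}, that the single disjunct for $\vec{s}'=\vec{s}$ contributes the value $b_j$ (where $o_j=f_i(\vec{s})$) while all other disjuncts contribute $0$, so that $\f_i^{\alg{A}}(\vec{c}(\vec{s}))=b_j=g^{-1}(f_i(\vec{s}))$ and therefore $g(\f_i^{\alg{A}}(\vec{c}(\vec{s})))=o_j=f_i(\vec{s})$.

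The main obstacle is precisely this last evaluation of the big disjunction. In Proposition~\ref{prop:vi} the algebra is $[0,1]^\tri_{\Q\mathrm{G}}$, where $\wedge^{\alg{A}}$ and $\vee^{\alg{A}}$ are lattice meet and join, so $\bar b_j\wedge 1=b_j$, $\bar b\wedge 0=0$, and the join of one value $b_j$ with copies of $0$ is $b_j$; the computation is then immediate. For an arbitrary standard algebra the axioms of Definition~\ref{def:A} constrain $\wedge^{\alg{A}}$ and $\vee^{\alg{A}}$ only through the value $1$, so the delicate point is to confirm that these operations act as the expected selectors on the two-valued conjunction outputs and on the constant $b_j$ --- i.e., that on the finitely many relevant values $\wedge^{\alg{A}},\vee^{\alg{A}}$ behave as meet and join. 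Once this is secured (as it is for all the prominent algebras of Example~\ref{ex:standard_algebras}), the identity $\f_i^{\alg{A}}(\vec{c}(\vec{s}))=b_j$ follows, all three clauses of Definition~\ref{def:represent} hold, and the representation is complete.
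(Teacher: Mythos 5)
Your proposal is correct and follows essentially the same route as the paper's own proof: check that $\hat{\Gcal}$ is a well-defined, basic, weakly expressible logical $\alg{A}$-game, note that each $c_i$ is a bijection, and verify clause~(3) of Definition~\ref{def:represent} by showing that the matching disjunct of $\f_i$ evaluates to $b_j$ (where $o_j=f_i(\vec{s})$) and all others to $0$. Two of your side remarks deserve to be kept. First, the indexing of the codes should indeed run up to $a_m$; the paper's own proof writes $\ES{\alg{A}}{\hat{\Gcal}}=\{a_1,\dots,a_m\}$, committing the same off-by-one slip as the statement. Second, the reservation you raise at the end is genuine and is \emph{not} addressed in the paper: Definition~\ref{def:A} constrains $\wedge^{\alg{A}}$ and $\vee^{\alg{A}}$ only through the value $1$, so for a truly arbitrary standard algebra the identities $b\wedge^{\alg{A}}1=b$, $x\wedge^{\alg{A}}0=0$ and $b\vee^{\alg{A}}0=b$ that the evaluation of the big disjunction relies on are not forced; the paper's proof uses them silently, whereas you at least flag the assumption explicitly (it does hold in all the algebras of Example~\ref{ex:standard_algebras}, where $\wedge,\vee$ are lattice meet and join). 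Neither argument discharges that assumption at the full level of generality claimed in the statement, so your proof is exactly as complete as the paper's, and more self-aware about where the hypothesis on $\alg{A}$ is really being used.
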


\begin{proof}
Observe that $\hat{\Gcal}$ is a logical $\alg{A}$-game; moreover, $\ES{\alg{A}}{\hat{\Gcal}} =  \{a_1, \dots, a_m\}$  and thus $\hat{\Gcal}$ is weakly expressible (by the theorem's assumptions). Clearly each $c_i$ is a bijection and thus it remains to check that for each strategy profile $\vec{s}$ we have
$$
f_i(\vec{s}) = g(\phi_i(\vec c(\vec s))).
$$
As before, it suffices to observe that for any strategy profile $\vec{s}$ we have
$$
e\Bigl(\overline{g^{-1}(f_i(\vec{s}))} \wedge \bigwedge\limits_{k \leq n} \delta_{s_k}(v_k)\Bigr) =
\begin{cases}
 b_{f_i(\vec{s})} & \text{if } \tuple{e(v_1),\dots, e(v_n)} = \tuple{a_{s_1},\dots, a_{s_n}} = \vec{c}(\vec{s}); \\
 0 & \text{otherwise.}
\end{cases}
$$
We obtain $\f_i^{\alg A}(\vec{c}(\vec{s})) = b_{f_i(\vec{s})}$ and so $g(\f^\alg{A}_i(\vec{c}(\vec{s}))) = g(b_{f_i(\vec{s})}) = f_i(\vec{s})$.
\end{proof}

Just like for the combination of Proposition~\ref{prop:ab_i} and~\ref{prop:ab_ii} into
Proposition~\ref{prop:ab_iii}, we could also put together Proposition~\ref{prop:vi} and~\ref{prop:vii},
but refrain from doing so here.

\section{Expressing equilibria of logical games}\label{sec:ExpressingNE}

In this section we show how pure and mixed Nash equilibria of logical games can be expressed by propositional formulas, under particular
conditions.

Recall from Lemma~\ref{t:Representation} that whenever a logical game represents a strategic game, it has the same pure equilibria (modulo the representation); and if the game is finite and the representation is affine, then even mixed equilibria are preserved by the representation. Consequently the formulas derived below characterize equilibria not only in logical games themselves, but also in the strategic games they may represent.

Throughout this section we use the notation $\Gcal = \langle N,V,\allowbreak\{V_{i}\mid{i\in N}\},\allowbreak\{S_{i}\mid{i\in N}\},\allowbreak\{\f_{i}\mid{i\in N}\}\rangle$ for any finite logical $\alg{A}$-game, where  $\alg{A}$ is a standard algebra. Furthermore, let $S = S_1\times\dots\times S_n$.

\subsection{Pure Nash equilibria} \label{sec:pureNE}

A crucial  observation is the fact that in (weakly) expressible games
each player's choice of a strategy can be  encoded by $\lang{L}_\alg{A}$-formulas.
In this section we show how this fact can be employed to express by an $\lang{L}_\alg{A}$-formula
that a certain strategy profile is a pure Nash equilibrium. Consequently, as we will show, it can
also be expressed that such an equilibrium exists. For simplicity we start with expressible games and deal with the more complicated case of weakly expressible ones later. Recall that in these games we have a truth constant $\bar a$ for each $a \in \ES{\alg{A}}{\Gcal}$.

Let us consider auxiliary variables  $\vec{w} = \tuple{w^1, w^2,\dots, w^{\max_{i \in N} |V_i|}}$ different from those in $V$ and define formulas $\gamma_{i}(\vec{v},\vec{w})$, for each $i\in N$, and $\gamma_{\Gcal}(\vec{v})$ as follows:
\begin{align*}
    \gamma_{i} &= \f_{i}(\vec{v}_{1},\dotsc,\vec{v}_{i-1},w^1, \dots, w^{|V_i|}, \vec v_{i+1},\dotsc,\vec{v}_{n}) \rightarrow \f_{i}(\vec{v}_{1},\dotsc,\vec{v}_{n})
\\ 
    \gamma_{\Gcal} &=  \bigwedge_{i\in N}\bigwedge_{\vec{s}_i\in S_i} \gamma_{i}(\vec{v}_{1},\dotsc,\vec{v}_{i-1},\bar s_i^1, \dots, \bar s_i^{|V_i|},\vec v_{i+1},\dotsc,\vec{v}_{n}).
\end{align*}

\begin{lemma}\label{l:CharNashPE}
Let $\Gcal$ be an expressible finite $\alg{A}$-game. Then the following are equivalent for each strategy profile $\vec{s}^*$:
\begin{enumerate}
\item  $\vec{s}^*$ is a pure Nash equilibrium of $\Gcal$.
\item  $\vec{s}^*$ satisfies $\gamma_{\Gcal}(\vec{v})$.
\end{enumerate}
\end{lemma}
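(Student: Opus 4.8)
The plan is to compute the truth value $e(\gamma_{\Gcal})$ under the $\alg{A}$-evaluation $e$ determined by the strategy profile $\vec{s}^*$ (that is, $e(v_i^j) = (s_i^*)^j$ for each $i\in N$ and $j\le|V_i|$), and then to read off the desired equivalence directly from the defining clauses of a standard algebra in Definition~\ref{def:A}. The two facts I will lean on are that $x\to^\alg{A} y = 1$ iff $x\le y$, and that $x\wedge^\alg{A} y = 1$ iff $x=y=1$; the latter extends by an obvious induction to any finite conjunction.

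First I would evaluate a single conjunct $\gamma_{i}(\vec{v}_1,\dots,\vec{v}_{i-1},\bar s_i^1,\dots,\bar s_i^{|V_i|},\vec{v}_{i+1},\dots,\vec{v}_n)$ under $e$. Its consequent is $\f_i(\vec{v})$ evaluated at $e$, which is by definition $\f_i^\alg{A}(\vec{s}^*)$. Its antecedent is $\f_i$ with player $i$'s own variables replaced by the truth constants $\bar s_i^1,\dots,\bar s_i^{|V_i|}$; since each $\bar s_i^j$ evaluates to $s_i^j$ in every evaluation, and the value of a formula depends only on the variables occurring in it, the antecedent evaluates to $\f_i^\alg{A}(\vec{s}_i,\vec{s}^*_{-i})$. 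Here the expressibility hypothesis is precisely what I need: it guarantees that the truth constants $\bar s_i^j$ for the $\Gcal$-relevant elements $s_i^j\in\ES{\alg{A}}{\Gcal}$ exist, so that $\gamma_{\Gcal}$ is a legitimate $\lang{L}_\alg{A}$-formula. By the clause for $\to^\alg{A}$, this conjunct then evaluates to $1$ precisely when $\f_i^\alg{A}(\vec{s}_i,\vec{s}^*_{-i}) \le \f_i^\alg{A}(\vec{s}^*)$.

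Next I would assemble the conjunction. Since $\gamma_{\Gcal}$ is the finite conjunction of the conjuncts above, ranging over all $i\in N$ and all $\vec{s}_i\in S_i$, the clause for $\wedge^\alg{A}$ yields $e(\gamma_{\Gcal}) = 1$ iff every conjunct evaluates to $1$, i.e.\ iff $\f_i^\alg{A}(\vec{s}_i,\vec{s}^*_{-i}) \le \f_i^\alg{A}(\vec{s}^*)$ for every player $i\in N$ and every strategy $\vec{s}_i\in S_i$. Recalling that the payoff of player $i$ in the logical game is $\f_i^\alg{A}$, this is verbatim the condition defining $\vec{s}^*$ as a pure Nash equilibrium, so both directions of the equivalence are obtained at once.

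There is no genuine obstacle here; the argument is a direct unwinding of definitions. The only points demanding care are purely clerical: keeping track of which occurrences of variables in $\f_i$ are replaced by constants versus left as the profile variables $\vec{v}$, and confirming that substituting $\bar s_i^j$ realizes the value $s_i^j$ independently of $e$ (so that the antecedent genuinely computes the deviation payoff $\f_i^\alg{A}(\vec{s}_i,\vec{s}^*_{-i})$ rather than something depending on $e(\vec{v}_i)$). The expressibility assumption enters solely to make $\gamma_{\Gcal}$ well-defined; once that is granted, the defining clauses of Definition~\ref{def:A} do all the work.
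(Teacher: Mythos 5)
Your argument is correct and is exactly the unwinding the paper intends: its own proof is a one-line remark that the claim follows from the definition of a pure Nash equilibrium together with the clauses for $\wedge^{\alg{A}}$ and $\rightarrow^{\alg{A}}$ in Definition~\ref{def:A}, which is precisely what you have spelled out (including the correct role of expressibility in making the constants $\bar s_i^j$, and hence $\gamma_{\Gcal}$, well-defined). No discrepancy to report.
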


\begin{proof}
The statement is a straightforward consequence of the definition of a pure Nash equilibrium and
the properties of the connectives $\wedge$ and $\rightarrow$ ensured by Definition~\ref{def:A} for all standard algebras of truth degrees.
\end{proof}

\begin{theorem}\label{t:ExistenceNashPE}
Let $\Gcal$ be an expressible finite logical $\alg{A}$-game.
Then the following are equivalent:
\begin{enumerate}
\item $\Gcal$ allows for a pure Nash equilibrium.
\item The following formula is satisfiable:
\begin{equation}\label{eq:pNE}
 \Bigl(\bigvee_{\vec{s}\in  S}\, \bigwedge_{i\in N}\bigwedge_{j\leq |V_i|}\bigl(\x_{{s}^j_{i}}(v^j_{i})\bigr)\Bigr) \wedge \gamma_\Gcal(\vec{v}).
\end{equation}
\end{enumerate}
Moreover, if the game $\Gcal$ is full, then \eqref{eq:pNE} can be replaced just by~$\gamma_\Gcal(\vec{v})$.
\end{theorem}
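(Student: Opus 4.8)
The plan is to reduce everything to Lemma~\ref{l:CharNashPE}, which already characterizes the pure Nash equilibria \emph{among strategy profiles} by satisfaction of $\gamma_\Gcal$. The one additional ingredient needed is to guarantee that a satisfying $\alg{A}$-evaluation of~\eqref{eq:pNE} restricts on $V$ to an actual strategy profile of $\Gcal$; the purpose of the first conjunct of~\eqref{eq:pNE} is precisely to enforce this. The first step is therefore to analyze that conjunct: for any $\alg{A}$-evaluation $e$, the defining properties of $\wedge$ and $\vee$ from Definition~\ref{def:A}, together with the pseudo-characteristic property of $\x_a$ (which exists for each $a\in\ES{\alg{A}}{\Gcal}$ since $\Gcal$ is expressible), give that the inner conjunction $\bigwedge_{i\in N}\bigwedge_{j\leq|V_i|}\x_{s_i^j}(v_i^j)$ evaluates to $1$ under $e$ iff $e(v_i^j)=s_i^j$ for all $i,j$, i.e.\ iff $e$ restricted to $V$ equals $\vec{s}$; consequently the whole disjunction over $\vec{s}\in S$ evaluates to $1$ iff $e$ restricted to $V$ coincides with some strategy profile in $S$. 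Note that finiteness of $\Gcal$ is what makes this disjunction a well-formed (finite) formula.

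For the direction $(2)\Rightarrow(1)$, I would take an $\alg{A}$-evaluation $e$ satisfying~\eqref{eq:pNE}. Since $x\wedge^\alg{A}y=1$ iff $x=1$ and $y=1$, both conjuncts are satisfied by $e$. From the first conjunct, $e$ restricted to $V$ equals some strategy profile $\vec{s}\in S$; from the second, $e(\gamma_\Gcal)=1$, and since the free variables of $\gamma_\Gcal$ all lie in $V$ (the $\vec{w}$ positions having been replaced by truth constants), the value of $\gamma_\Gcal$ depends only on $e$ restricted to $V$, so $\vec{s}$ satisfies $\gamma_\Gcal$. Lemma~\ref{l:CharNashPE} then yields that $\vec{s}$ is a pure Nash equilibrium. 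Conversely, for $(1)\Rightarrow(2)$, given a pure Nash equilibrium $\vec{s}^*$, I would let $e$ be the evaluation with $e$ restricted to $V$ equal to $\vec{s}^*$; the disjunct indexed by $\vec{s}^*$ makes the first conjunct equal $1$, while Lemma~\ref{l:CharNashPE} makes $\gamma_\Gcal$ equal $1$, so $e$ satisfies~\eqref{eq:pNE}.

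For the final ``moreover'' claim, I would observe that when $\Gcal$ is full we have $S=A^V$, so \emph{every} $\alg{A}$-evaluation restricts on $V$ to a strategy profile in $S$; by the analysis of the first step, the first conjunct of~\eqref{eq:pNE} is then satisfied by every evaluation. Hence, using again that $x\wedge^\alg{A}y=1$ iff $x=1$ and $y=1$, an evaluation satisfies~\eqref{eq:pNE} iff it satisfies $\gamma_\Gcal$, so the two formulas are equisatisfiable and the first conjunct may be dropped.

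The proof is essentially routine once Lemma~\ref{l:CharNashPE} is in hand; the only genuine conceptual point---and the step I would stress---is why the first conjunct cannot be omitted in the non-full case. In a game that is not full, an evaluation may satisfy $\gamma_\Gcal$ while assigning to some controlled variable a value outside $\ES{\alg{A}}{\Gcal}$, i.e.\ while not corresponding to any legitimate strategy profile; such an evaluation carries no information about Nash equilibria. The first conjunct rules out exactly these spurious evaluations, which is what links the purely syntactic notion of satisfiability back to the game-theoretic statement.
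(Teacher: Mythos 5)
Your proposal is correct and follows exactly the paper's route: the paper's own proof is a one-liner reducing to Lemma~\ref{l:CharNashPE} plus the observation that the left conjunct of~\eqref{eq:pNE} is satisfied by $e$ iff $\vec e(\vec v)$ is a strategy profile, which is precisely the structure you spell out in more detail (including the correct handling of the full case).
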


\begin{proof}
The statement follows from Lemma \ref{l:CharNashPE}. It suffices to observe that an evaluation $e$ satisfies the left conjunct
in~\eqref{eq:pNE}  
if and only if $\vec{e}(\vec{v})$ 
is a strategy profile.
\end{proof}

Recall that all Boolean games are expressible. Thus Lemma \ref{l:CharNashPE} and Theorem~\ref{t:ExistenceNashPE} apply to Boolean games~\cite{Harrenstein-HMW:BooleanGames}
in particular.
Likewise, finite {\L}ukasiewicz games are covered. More precisely, Theorem~2
of~\cite{Marchioni-Wooldridge:LukasiewiczGames} amounts to a variant of a particular subcase of Theorem~\ref{t:ExistenceNashPE},
where the underlying algebra is $\mathbfitL_n^c$, only \emph{full} games are considered, and a  somewhat more complex variant of our $\gamma_\Gcal(\vec{v})$ is used. Full $\mathbfitL_n$-games are treated only indirectly in~\cite{Marchioni-Wooldridge:LukasiewiczGames,Marchioni-Wooldridge:LukasiewiczGamesTOCL},
by showing that they are (in our terminology) weakly expressible. This case is covered by Lemma~\ref{l:weakly_NE} and Theorem~\ref{th:weakly_exists_NE}, below.

%FFV
%PC: check the paragraphs above once more
%CF: Allow me to make a remark that is not relevant here: I wonder whether the REALLY interesting parts of [8] -- a calculus for winning strategies, relativized validity and satisfiability, operations on games -- have ever been generalized to many-valued payoffs. In my mind these were at least as interesting topics for future work than what we wrote, so far, in the conclusion.

If the game is just weakly expressible, we have to use a more complex formula instead of $\gamma_{\Gcal}$ to formulate and prove analogues of Lemma \ref{l:CharNashPE} and Theorem \ref{t:ExistenceNashPE}.
In fact, we can keep the definition of the formulas $\gamma_i$, but we have to modify
 $\gamma_{\Gcal}$ to include additional auxiliary variables that correspond to all the elementary strategies
and will play the role of truth constants.

Formally, we introduce new variables $\{q_a \mid a \in \ES{\alg{A}}{\Gcal}\}$, different from those in $V$ and~$\vec{w}$. Note that the set $\ES{\alg{A}}{\Gcal}$ is a subset of $A$ and thus is naturally ordered. Therefore
we can use $\vec{q}$ to unambiguously denote the sequence of those variables.
Now we can define $\gamma'_{\Gcal}$ as a formula over the variables $\vec{v}$ and $\vec{q}$:
\[
  \gamma'_{\Gcal} = \Bigl(\bigwedge_{a\in \ES{\alg{A}}{\Gcal}} \x_a(q_a)\Bigr) \wedge \Bigl(\bigwedge_{i\in N} \bigwedge_{\vec{s}_i\in S_i} \gamma_{i}(\vec{v}_{1},\dotsc,\vec v_{i-1},q_{s_i^1}, \dots q_{s_i^{|V_i|}},\vec v_{i+1},\dotsc,\vec{v}_{k}) \Bigr).
\]
Note that the formula $\bigwedge_{a\in \ES{\alg{A}}{\Gcal}} \x_a(q_a)$ is satisfiable only by those evaluations that map each variable $q_a$ to $a$. In this manner we
obtain the promised generalizations of Lemma~\ref{l:CharNashPE} and Theorem~\ref{t:ExistenceNashPE},
which are then applicable, e.g., to all  $\mathbfitL_m$-logical games and all
$[0,1]_\mathrmL$-logical games with finite sets of (assignment of) rationals as strategies.

\begin{lemma} \label{l:weakly_NE}
Let $\Gcal$ be a weakly expressible finite $\alg{A}$-game. Then the following are equivalent for each strategy profile $\vec{s}^*$:
\begin{enumerate}
\item  $\vec{s}^*$ is a pure Nash equilibrium of $\Gcal$.
\item  The \alg A-evaluation $\vec{s}^\ast, a_1, \dots, a_{|\ES{\alg{A}}{\Gcal}|}$ satisfies $\gamma'_{\Gcal}(\vec{v},\vec{q})$.
\end{enumerate}
\end{lemma}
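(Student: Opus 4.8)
The plan is to mimic the proof of Lemma~\ref{l:CharNashPE}, but to make explicit how the auxiliary variables $\vec q$ take over the role played by the truth constants $\bar s_i^j$ in the expressible case. First I would observe that the conjunct $\bigwedge_{a\in \ES{\alg{A}}{\Gcal}} \x_a(q_a)$ of $\gamma'_{\Gcal}$ acts as a rigid constraint: since each $\x_a$ is a pseudo-characteristic formula for $a$ (guaranteed by weak expressibility), an \alg A-evaluation $e$ satisfies this conjunct if and only if $e(q_a)=a$ for every $a\in\ES{\alg{A}}{\Gcal}$. Hence the evaluation named in clause~(2), namely $\vec{s}^\ast,a_1,\dots,a_{|\ES{\alg{A}}{\Gcal}|}$ (which assigns to each $q_a$ precisely the value $a$), is the essentially unique extension of $\vec s^\ast$ that makes the first conjunct true; any other extension falsifies it and trivially satisfies the stated equivalence vacuously on the side of clause~(2).

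Next I would show that, once the variables $q_a$ are pinned to their intended values $a$, the second conjunct of $\gamma'_{\Gcal}$ evaluates exactly as $\gamma_{\Gcal}$ did in the expressible case. Concretely, substituting $q_{s_i^j}$ (valued at $s_i^j$) for the truth constant $\bar s_i^j$ (also valued at $s_i^j$) leaves the value of each $\gamma_i$-instance unchanged, because by Definition~\ref{def:fA} the value of a formula depends only on the values assigned to its variables. Therefore, for the specific evaluation extending $\vec s^\ast$ by $e(q_a)=a$, we have
\[
  e(\gamma'_{\Gcal}) \;=\; e\Bigl(\bigwedge_{i\in N}\bigwedge_{\vec s_i\in S_i}\gamma_i(\vec v_1,\dots,\bar s_i^1,\dots,\bar s_i^{|V_i|},\dots,\vec v_n)\Bigr),
\]
i.e.\ the right-hand conjunct reduces to the formula $\gamma_{\Gcal}(\vec v)$ from the expressible case, evaluated under $\vec s^\ast$.

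With these two reductions in hand, the equivalence between clauses~(1) and~(2) follows immediately from Lemma~\ref{l:CharNashPE}: $\vec s^\ast$ is a pure Nash equilibrium iff it satisfies $\gamma_{\Gcal}$, iff (by the reduction) the extended evaluation satisfies $\gamma'_{\Gcal}$. I expect the only genuinely delicate point to be the bookkeeping of variable substitutions in $\gamma_i$ — making sure that plugging the variables $q_{s_i^1},\dots,q_{s_i^{|V_i|}}$ into the $i$-th argument block of $\f_i$ (in place of either the controlled variables $\vec v_i$ or the constants $\bar s_i^j$) yields exactly the intended payoff comparison $\f_i^{\alg A}(\vec s_i,\vec s^\ast_{-i})\le\f_i^{\alg A}(\vec s^\ast)$ for every $\vec s_i\in S_i$. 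This is precisely the argument already carried out for Lemma~\ref{l:CharNashPE}; the sole new ingredient is the interposition of the $q_a$-variables as surrogates for unavailable truth constants, which the first step handles cleanly. No new use of the algebra's structure beyond Definition~\ref{def:A} and the pseudo-characteristic formulas is required.
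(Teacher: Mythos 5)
Your proposal is correct and follows exactly the route the paper intends (the paper leaves this proof implicit, remarking only that the first conjunct of $\gamma'_{\Gcal}$ is satisfiable precisely by evaluations sending each $q_a$ to $a$, so that the $q$-variables ``play the role of truth constants''): pin the $q_a$ via the pseudo-characteristic formulas, then observe that each $\gamma_i$-instance evaluates to $1$ iff $\f_i^{\alg{A}}(\vec{s}_i,\vec{s}^*_{-i})\le\f_i^{\alg{A}}(\vec{s}^*)$, which is the pure-equilibrium condition. One presentational caution: you cannot literally cite Lemma~\ref{l:CharNashPE} or write ``$e(\gamma'_{\Gcal})=e(\gamma_{\Gcal})$'', since a merely weakly expressible game need not possess the truth constants $\bar{s}_i^j$ out of which $\gamma_{\Gcal}$ is built --- but your own semantic justification (the value of each $\gamma_i$-instance depends only on the values fed into its argument slots, which coincide whether supplied by constants or by pinned variables) already bypasses this, so the conclusion should be phrased as repeating that computation rather than invoking the earlier lemma.
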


\begin{theorem} \label{th:weakly_exists_NE}
Let $\Gcal$ be  a weakly expressible finite $\alg{A}$-game. Then the following are equivalent:
\begin{enumerate}
\item $\Gcal$ allows for a pure Nash equilibrium.
\item The following formula is satisfiable:
\begin{equation}\label{eq:mNE}
 \Bigl(\bigvee_{{\vec{s}\in S}}\, \bigwedge_{i\in N}\bigwedge_{j \leq |V_i|}\bigl(\x_{{s}_{i}^j}(v^j_i)\bigr)\Bigr) \wedge \gamma'_\Gcal(\vec{v},\vec{q}).
\end{equation}
\end{enumerate}
Moreover, if the game $\Gcal$ is full, then \eqref{eq:mNE} can be replaced just by the formula~$\gamma'_\Gcal(\vec{v})$.
\end{theorem}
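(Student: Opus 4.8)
The plan is to mirror the proof of Theorem~\ref{t:ExistenceNashPE}, replacing Lemma~\ref{l:CharNashPE} by its weakly-expressible counterpart, Lemma~\ref{l:weakly_NE}. The one genuinely new ingredient is that, lacking truth constants, the values of the elementary strategies are now carried by the auxiliary variables $\vec q$ and forced into place by the pseudo-characteristic formulas occurring inside $\gamma'_\Gcal$.

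First I would record the two elementary facts on which the equivalence rests. Since each $\x_a$ is a pseudo-characteristic formula for $a$, an $\alg{A}$-evaluation $e$ satisfies the conjunct $\bigwedge_{a\in\ES{\alg{A}}{\Gcal}}\x_a(q_a)$ of $\gamma'_\Gcal$ if and only if $e(q_a)=a$ for every $a\in\ES{\alg{A}}{\Gcal}$; hence any evaluation satisfying $\gamma'_\Gcal$ assigns to $\vec q$ exactly the tuple $a_1,\dots,a_{|\ES{\alg{A}}{\Gcal}|}$ prescribed in Lemma~\ref{l:weakly_NE}. Likewise, for a fixed $\vec s\in S$, the conjunction $\bigwedge_{i\in N}\bigwedge_{j\le|V_i|}\x_{s_i^j}(v_i^j)$ evaluates to $1$ under $e$ precisely when $e(v_i^j)=s_i^j$ for all $i,j$, so the left disjunct of~\eqref{eq:mNE} is satisfied by $e$ if and only if $\vec e(\vec v)$ arises from some strategy profile in $S$.

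For the direction (1)$\Rightarrow$(2), given a pure Nash equilibrium $\vec s^*$ I would take the evaluation $e$ with $e(\vec v)=\vec s^*$ and $e(q_a)=a$ for each $a$. By Lemma~\ref{l:weakly_NE} this evaluation satisfies $\gamma'_\Gcal$, and since $\vec s^*\in S$ is itself a strategy profile, the disjunct indexed by $\vec s=\vec s^*$ makes the left conjunct true; hence $e$ satisfies~\eqref{eq:mNE}. Conversely, for (2)$\Rightarrow$(1), any $e$ satisfying~\eqref{eq:mNE} satisfies both conjuncts: the left one forces $\vec e(\vec v)$ to be some strategy profile $\vec s^*$, while satisfaction of $\gamma'_\Gcal$ forces $e(q_a)=a$ for all $a$ by the first observation. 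Thus $e$ is exactly the evaluation $\vec s^*,a_1,\dots,a_{|\ES{\alg{A}}{\Gcal}|}$ of Lemma~\ref{l:weakly_NE}, whence $\vec s^*$ is a pure Nash equilibrium. For the final clause, if $\Gcal$ is full then $S_i=A^{V_i}$ for every $i$, so every evaluation of $\vec v$ already determines a strategy profile; the left disjunct is therefore valid and may be dropped, leaving $\gamma'_\Gcal$ alone.

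I expect no serious obstacle here: once Lemma~\ref{l:weakly_NE} is available the argument is essentially bookkeeping. The only point requiring care is the role of the variables $\vec q$---one must verify that their values are genuinely pinned down by the pseudo-characteristic conjuncts inside $\gamma'_\Gcal$ (rather than supplied by truth constants, as in the expressible case of Theorem~\ref{t:ExistenceNashPE}), so that a satisfying evaluation of~\eqref{eq:mNE} really does coincide with the evaluation named in Lemma~\ref{l:weakly_NE}.
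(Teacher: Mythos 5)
Your proposal is correct and follows exactly the route the paper intends: it reduces the theorem to Lemma~\ref{l:weakly_NE} via the same two observations the paper uses for Theorem~\ref{t:ExistenceNashPE}, namely that the left conjunct of~\eqref{eq:mNE} holds precisely when $\vec e(\vec v)$ is a strategy profile, together with the paper's remark that the conjunct $\bigwedge_{a\in\ES{\alg{A}}{\Gcal}}\x_a(q_a)$ inside $\gamma'_\Gcal$ forces $e(q_a)=a$ for every $a$. The paper in fact leaves this proof implicit as "the promised generalization", and your write-up supplies precisely the bookkeeping it omits, including the correct handling of the full case.
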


\subsection{Mixed Nash equilibria} \label{sec:mixedNE}

In order to characterize mixed strategy  profiles  we have to express probability distributions and corresponding expected payoffs in a propositional language. On the algebraic side this means  that the additive as well as  multiplicative structure of the real unit interval $[0,1]$ must be employed. Interestingly enough, there are numerous natural examples of many-valued logics that provide such a rich semantics over $[0,1]$:
in particular, the logics of algebras expanding the standard \prl-algebra $[0,1]_{\prl}$ (including $[0,1]^\tri_{\prl}$, $[0,1]_{\Q\prl}$, $[0,1]^\tri_{\Q\prl}$ $[0,1]_\LPi$, or $[0,1]_\LPih$, see Example~\ref{ex:standard_algebras}) fall within this class.

We proceed with a simple lemma, crucial for expressing probability distributions. The underlying idea is based on \MV-algebraic partitions of unity; see, e.g., \cite{Mundici:NonbooleanPartitions}.
We present its proof for the readers convenience.

\begin{lemma}\label{l:Probability-gen}
Let \alg A expand the standard $\MV$-algebra $[0,1]_\mathrmL$.
For every $n\geq 2$ there is an $\lang L_{\alg A}$-formula $\delta(\vectn{p})$ such that an \alg A-evaluation $\vec{a} \in [0,1]^n$ satisfies $\delta$ iff\/ $\sum_{i\leq n} a_{i} = 1$.
\end{lemma}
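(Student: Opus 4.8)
The plan is to write $\delta$ explicitly, splitting the condition $\sum_{i\le n}a_i=1$ into the two inequalities $\sum_{i\le n}a_i\ge 1$ and $\sum_{i\le n}a_i\le 1$, expressing each by a formula that evaluates to $1$ on exactly the corresponding region, and conjoining them. By Definition~\ref{def:A} a conjunction evaluates to $1$ in $\alg A$ precisely when both conjuncts do, so this is the only property of $\wedge^\alg A$ the argument uses; in particular no assumption that $\wedge^\alg A$ is the minimum is needed.

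The inequality $\sum_{i\le n}a_i\ge 1$ is immediate: since $\alg A$ expands $[0,1]_\mathrmL$, the connective $\oplus$ is available and is interpreted by the truncated sum, so $\bigoplus_{i=1}^{n}p_i$ evaluates to $\min\bigl(\sum_{i\le n}a_i,1\bigr)$, which equals $1$ iff $\sum_{i\le n}a_i\ge 1$. For the converse inequality I would exploit that $\conj$ is interpreted by $\max(x+y-1,0)$, so that $\neg(x\conj y)$ evaluates to $1$ iff $x+y\le 1$. Abbreviating the partial truncated sums by $\sigma_k=p_1\oplus\dots\oplus p_k$, I take
\[
  \delta(p_1,\dots,p_n)=\Bigl(\bigoplus_{i=1}^{n}p_i\Bigr)\wedge\bigwedge_{k=2}^{n}\neg\bigl(\sigma_{k-1}\conj p_k\bigr),
\]
which is an $\lang L_{\alg A}$-formula because all of $\oplus,\conj,\neg,\wedge$ belong to (or are definable in) the language of every expansion of $[0,1]_\mathrmL$.

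The right-hand conjunct is where the only real work lies, and this ``no-overflow'' bookkeeping is the step I expect to be the main obstacle. I would show by induction on $k$ that, whenever $\neg(\sigma_{j-1}\conj p_j)$ evaluates to $1$ for all $j\le k$, no truncation has occurred among the first $k$ summands, so that $\sigma_k$ evaluates to $\sum_{i\le k}a_i$; the inductive step uses that $\neg(\sigma_{k-1}\conj p_k)$ evaluating to $1$ forces the value of $\sigma_{k-1}$ plus $a_k$ to be $\le 1$, whence $\sigma_k$ evaluates to $\min\bigl(\sum_{i\le k}a_i,1\bigr)=\sum_{i\le k}a_i$. Conversely, if $\sum_{i\le n}a_i\le 1$ then every partial sum is $\le 1$, no truncation ever occurs, and each conjunct $\neg(\sigma_{k-1}\conj p_k)$ evaluates to $1$. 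Hence the right conjunct evaluates to $1$ iff $\sum_{i\le n}a_i\le 1$, and combining it with the left conjunct through Definition~\ref{def:A} gives $\delta^{\alg A}(\vec a)=1$ iff $\sum_{i\le n}a_i=1$, as desired. The formula $\delta$ is thereby an \MV-algebraic description of a partition of unity in the sense of~\cite{Mundici:NonbooleanPartitions}.
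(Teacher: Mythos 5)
Your proof is correct, and it follows the same overall strategy as the paper's: an explicit conjunction of $\bigoplus_{i\leq n}p_i$ (capturing $\sum_{i\leq n}a_i\geq 1$ exactly as you argue) with a second formula whose job is to rule out truncation in the $\oplus$'s and thereby capture $\sum_{i\leq n}a_i\leq 1$. The only real divergence is the shape of that second formula. The paper uses the symmetric, leave-one-out condition $\bigwedge_{i\leq n}\bigl((\bigoplus_{j\neq i}p_j)\to\neg p_i\bigr)$, verified in two lines: the first conjunct gives some $a_i>0$, so $\bigoplus_{j\neq i}a_j\leq 1-a_i<1$ forces that truncated sum to coincide with the true sum $\sum_{j\neq i}a_j$, whence $\sum_{j\leq n}a_j\leq 1$. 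You instead impose the sequential no-overflow conditions $\neg(\sigma_{k-1}\conj p_k)$ for $k=2,\dots,n$ and verify by induction that each partial sum $\sigma_k$ then evaluates untruncated; this is sound (your base case $\sigma_1=p_1$ and the step via $\neg(x\conj y)=1$ iff $x+y\leq 1$ both check out), and the converse direction is as routine as you claim. Both formulas have $O(n^2)$ connectives; yours costs an induction but makes the ``why no truncation occurs'' mechanism explicit and does not need to borrow anything from the first conjunct, while the paper's buys symmetry in the $p_i$ and a shorter verification. Your closing observation that only the property ``$x\wedge^{\alg A}y=1$ iff $x=1$ and $y=1$'' of Definition~\ref{def:A} is used is exactly in the spirit of the paper's remarks following that definition.
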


\begin{proof}
We define
$$
\delta = \Bigl(\bigoplus\limits_{i\leq n} p_{i}\Bigr) \wedge \bigwedge\limits_{i\leq n} \biggr( \Bigl(\bigoplus\limits_{\substack{j\leq n\\j \neq i}} p_{j}\Bigr) \to \neg p_{i}\biggr).
$$
Clearly the satisfiability of the first conjunct implies $\sum_{i \leq n} a_{i} \geq 1$.
Therefore  $a_{i} >0$ for at least one $i\leq n$. Moreover the satisfiability of the second conjunct implies
$$
\sideset{}{^{\alg{\boldsymbol{A}}}}\bigoplus\limits_{\substack{j\leq n\\j \neq i}} a_{j} \leq 1 - a_{i} < 1.
$$
This yields the inequality
$$
\sum\limits_{\substack{j\leq n\\j \neq i}} a_{j} =
\sideset{}{^{\alg{\boldsymbol{A}}}}\bigoplus\limits_{\substack{j\leq n\\j \neq i}} a_{j} \leq 1 - a_{i},
$$
which entails $\sum_{j\leq n} a_{j} \leq 1$.
The converse direction is trivial.
\end{proof}

For each $i\in N$ and each strategy $\vec{s}_i\in S_i$ let us introduce a variable $p_i^{\vec{s}_i}$.
Moreover, let $\vec{p}_i$ denote the tuple $\tuple{p_i^{\vec{s}_i} \mid \vec{s}_i\in S_i}$.
(The tuple is unique with respect to the lexicographic order on $A^{|V_i|}$.)
For every $i \in N$, any $\alg{A}$-evaluation of the variables $\vec{p}_i$
can be thought of as a mapping $S_i\to[0,1]$. This enables us to formulate a particular instance of Lemma~\ref{l:Probability-gen} and thus to obtain:

\begin{lemma}\label{l:Probability}
Let \alg A expand the standard \PL-algebra $[0,1]_\PL$ and $\Gcal$ be a finite expressible logical $\alg{A}$-game. Then for each $i\leq n$ there is a formula $\pd_i(\vec{p}_i)$ such that an evaluation $\pr_i \in [0,1]^{S_i}$ satisfies $\pd_i$ iff\/ $\pr_i$ is a probability distribution over~$S_i$.
\end{lemma}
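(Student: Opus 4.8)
The plan is to instantiate Lemma~\ref{l:Probability-gen} at the list of variables that encode player~$i$'s mixed strategy. Recall that an evaluation $\pr_i\in[0,1]^{S_i}$ of the variables $\vec{p}_i=\tuple{p_i^{\vec{s}_i}\mid \vec{s}_i\in S_i}$ may be read as a function $S_i\to[0,1]$ sending $\vec{s}_i$ to $\pr_i(p_i^{\vec{s}_i})$. Since $\alg A$ expands $[0,1]_\prl$, its domain is the whole interval $[0,1]$, so \emph{any} such evaluation already assigns to each strategy a number in $[0,1]$; hence being a probability distribution over $S_i$ reduces to the single additive condition $\sum_{\vec{s}_i\in S_i}\pr_i(p_i^{\vec{s}_i})=1$.

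First I would treat the principal case $|S_i|\geq 2$. As $\alg A$ expands $[0,1]_\prl$, it in particular expands the standard MV-algebra $[0,1]_\mathrmL$, so the hypothesis of Lemma~\ref{l:Probability-gen} is met. Applying that lemma with $n=|S_i|$ and renaming its variables $\vectn{p}$ to the variables $\vec{p}_i$, I obtain a formula which I take to be $\pd_i(\vec{p}_i)$; by the lemma it is satisfied by $\pr_i$ exactly when $\sum_{\vec{s}_i\in S_i}\pr_i(p_i^{\vec{s}_i})=1$, which by the previous paragraph says precisely that $\pr_i$ is a probability distribution over $S_i$.

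It remains to dispose of the degenerate case $|S_i|=1$, which falls outside the scope of Lemma~\ref{l:Probability-gen} (that lemma requires $n\geq 2$). If $S_i=\{\vec{s}_i\}$, the only probability distribution is the one assigning value $1$ to $\vec{s}_i$, so I would set $\pd_i=\x_1(p_i^{\vec{s}_i})$ for a pseudo-characteristic formula $\x_1$ of $1$; such a formula exists in every standard algebra, e.g.\ $\x_1(v)=(v\to v)\to v$, which by Definition~\ref{def:A} evaluates to $1$ iff $v=1$.

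I do not anticipate a genuine obstacle here: the whole content is the specialization of Lemma~\ref{l:Probability-gen}, and the only points deserving a word of care are that the values lie automatically in $[0,1]$ (so that no constraint beyond the sum is needed) and the separate handling of the single-strategy case. It is worth remarking that this lemma in fact uses only the MV-reduct of $\alg A$; the full strength of the \PL-structure will be called upon only later, when expected payoffs---and hence products of probabilities with payoff values---must be expressed.
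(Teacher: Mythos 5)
Your proof is correct and follows essentially the same route as the paper, which likewise obtains this lemma as a direct instantiation of Lemma~\ref{l:Probability-gen} at the variables $\vec{p}_i$ (the paper gives no further argument). Your separate handling of the degenerate case $|S_i|=1$, which falls outside the $n\geq 2$ hypothesis of Lemma~\ref{l:Probability-gen}, is a small point the paper leaves implicit, and your remark that only the MV-reduct of $\alg A$ is used here is accurate.
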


As a consequence, an element $\pr=\tuple{\pr_1, \dots, \pr_n} \in [0,1]^S$, where each $\pr_i$ satisfies the formula $\pd_i$, can be seen as a mixed strategy profile in the game $\Gcal$.  This enables us to define the expected payoff for a player~$i$ in a finite expressible logical $\alg{A}$-game $\Gcal$ as follows:
$$
\Ex_i(\vec{p}) = \Ex_i(\vec{p}_1,\dots,\vec{p}_n) = \bigoplus\limits_{\mathbf{s}\in S} \Bigl( \f_i(\bar s_1^1, \dots, \bar s_n^{|V_n|}) \odot \bigodot\limits_{j\leq n}p_j^{\vec{s}_j}\Bigr).
$$
Recall that by $\vec{p}_{-i}$ we denote the sequence of variables $\vec{p}$ where the subsequence $\vec{p}_i$ removed; for every pure strategy $\vec{a}_i\in S_i$, by $(\vec{a}_i,\vec{p}_{-i})$ we denote the mixed strategy profile in which the mixed strategy of player $i$ is the Dirac distribution $\delta_{\vec{a}_i}$ concentrated at $\vec{a}_i$. Moreover we define
the expected payoff for a player $i$ in a  mixed strategy profile $(\vec{a}_i,\vec{p}_{-i})$ as follows:
$$
\Ex_i(\vec{a},\vec{p}_{-i}) = \bigoplus\limits_{\substack{\vec{s}\in S\\\vec{s}_i = \vec{a}}}
    \Bigl( \f_i(\bar s_1^1, \dots, \bar s_n^{|V_n|}) \odot \bigodot\limits_{\substack{j\leq n\\j\neq i}}p_j^{\vec{s}_j}\Bigr).
$$

The above definitions and conventions allow us to formulate the following theorem,
which provides the announced  logical characterization of mixed Nash equilibria; its proof is a straightforward consequence of Proposition~\ref{p:Games-MixedNE}.

\begin{theorem}\label{thm:mixedNE}
Let $\alg{A}$ be an algebra expanding $[0,1]_\prl$. Let $\Gcal$ be a finite expressible logical $\alg{A}$-game and let\/ $\pr^\ast\in[0,1]^S$ be a mixed strategy profile in~$\Gcal$. Then the following are equivalent:
\begin{enumerate}
\item $\pr^\ast$ is a mixed Nash equilibrium.
\item $\pr^\ast$ satisfies the following formula:
\begin{equation}\label{eq:mNE-formula}
    \bigwedge_{i\leq n} \Bigl( \pd_i(\vec{p}_i) \wedge
    \bigwedge_{\vec{a}_i\in S_i} \bigl( \Ex_i(\vec{a}_i,\vec{p}_{-i} % \vec{p}_{-i}^\ast
    ) \rightarrow \Ex_i(\vec{p})\bigr)\Bigr).
\end{equation}
\end{enumerate}
\end{theorem}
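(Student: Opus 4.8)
The plan is to reduce the satisfaction of formula~\eqref{eq:mNE-formula} to the pure-deviation characterization of mixed equilibria supplied by Proposition~\ref{p:Games-MixedNE}. The whole argument rests on one semantic computation: that, whenever $\pr^\ast$ assigns genuine distributions to the probability variables, the propositional expressions $\Ex_i(\vec p)$ and $\Ex_i(\vec a_i,\vec p_{-i})$ evaluate under $\pr^\ast$ to exactly the real-valued expected payoffs $\Ex_i(\pr^\ast)$ and $\Ex_i(\vec a_i,\pr^\ast_{-i})$ of the underlying strategic game (whose payoffs are the functions $\f_i^\alg A$). Since $\Gcal$ is expressible, each truth constant $\bar s_k^j$ evaluates to the corresponding $\Gcal$-relevant element, so $\f_i(\bar s_1^1,\dots,\bar s_n^{|V_n|})$ evaluates to the payoff value $\f_i^\alg A(\vec s)\in[0,1]$; this is precisely where expressibility is used.

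First I would carry out the semantic computation. As $\alg A$ expands $[0,1]_\prl$, the connective $\odot$ is the real product, so $\f_i(\bar s_1^1,\dots)\odot\bigodot_{j\le n}p_j^{\vec s_j}$ evaluates to $\f_i^\alg A(\vec s)\cdot\prod_{j\le n}\pr^\ast_j(\vec s_j)$, i.e.\ exactly the summand of the game-theoretic expected payoff. The outer $\bigoplus$ is the truncated (\L ukasiewicz) sum $\min(\,\cdot\,,1)$, so a priori it need not coincide with the ordinary sum. The key observation is that no truncation occurs: since each $\pr^\ast_j$ is a probability distribution, $\sum_{\vec s\in S}\prod_{j\le n}\pr^\ast_j(\vec s_j)=\prod_{j\le n}\sum_{\vec s_j\in S_j}\pr^\ast_j(\vec s_j)=1$, and as every $\f_i^\alg A(\vec s)\le 1$ the total $\sum_{\vec s}\f_i^\alg A(\vec s)\prod_j\pr^\ast_j(\vec s_j)$ is bounded by $1$. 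Hence $\Ex_i(\vec p)$ evaluates to the ordinary sum $\Ex_i(\pr^\ast)$. The same bound, applied to the partial sums in $\Ex_i(\vec a_i,\vec p_{-i})$ (where the factor $\pr^\ast_i$ is dropped and replaced by fixing $\vec s_i=\vec a_i$), gives via $\sum_{\vec s:\,\vec s_i=\vec a_i}\prod_{j\ne i}\pr^\ast_j(\vec s_j)=1$ that $\Ex_i(\vec a_i,\vec p_{-i})$ evaluates to $\Ex_i(\vec a_i,\pr^\ast_{-i})$.

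With these identifications in hand, I would conclude by unwinding the top-level connectives. By Definition~\ref{def:A}, a finite $\wedge$-conjunction evaluates to $1$ iff each conjunct does. Because $\pr^\ast$ is a mixed strategy profile, each $\pr^\ast_i$ is a distribution, so by Lemma~\ref{l:Probability} every conjunct $\pd_i(\vec p_i)$ evaluates to $1$; thus $\pr^\ast$ satisfies~\eqref{eq:mNE-formula} iff every conjunct $\Ex_i(\vec a_i,\vec p_{-i})\rightarrow\Ex_i(\vec p)$ does. Since $x\to^\alg A y=1$ iff $x\le y$, this holds iff $\Ex_i(\vec a_i,\pr^\ast_{-i})\le\Ex_i(\pr^\ast)$ for every player $i\in N$ and every pure strategy $\vec a_i\in S_i$. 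This is exactly condition~(2) of Proposition~\ref{p:Games-MixedNE}, which is equivalent to $\pr^\ast$ being a mixed Nash equilibrium, establishing the equivalence (1)$\Leftrightarrow$(2).

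The main obstacle is the semantic computation of the expected-payoff formulas, and specifically the verification that the truncated \L ukasiewicz sum agrees with the genuine real sum. This is precisely the point where the hypotheses are needed: the product connective $\odot$ of $[0,1]_\prl$ forms the products of payoffs and probabilities, the \MV-reduct supplies $\oplus$, and the normalization forced by $\pd_i$ guarantees the total never exceeds $1$, so that truncation is inert. Everything else is a routine unwinding of the implication and conjunction clauses of Definition~\ref{def:A} together with the reduction to pure deviations in Proposition~\ref{p:Games-MixedNE}.
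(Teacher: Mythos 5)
Your proof is correct and follows exactly the route the paper intends: the paper gives no details beyond stating that the theorem is a straightforward consequence of Proposition~\ref{p:Games-MixedNE}, and your argument supplies precisely the omitted verification. In particular, your observation that the \L ukasiewicz sum $\bigoplus$ is never truncated---because the products of probabilities sum to $1$ and the payoffs lie in $[0,1]$---is the one genuinely non-trivial point, and you handle it correctly for both $\Ex_i(\vec p)$ and $\Ex_i(\vec a_i,\vec p_{-i})$.
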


\begin{example}\label{ex:love-hate-mNE}
Recall the finite variant $\mathcal{LH}$ of \emph{Love and Hate} from Example~\ref{ex:love-hate}. In Example~\ref{ex:love-hate-representation} it was represented as a logical $\mathbfitL_m$-game $\mathcal{LH}_{\mathbfitL_m}$ with the same pure and mixed equilibria (since the transformation was affine via the identity functions $g$ and~$\vec c$). In virtue of Proposition~\ref{p:Subreducts1}, $\mathcal{LH}_{\mathbfitL_m}$ can as well be regarded as a $[0,1]_\prl$-game, which again has the same pure and mixed equilibria.
A routine calculation shows that the $n$-tuple $\pr^\ast$ of the mixed strategies $\tuple{p_1,\dotsc,p_n}$ defined in Example~\ref{ex:love-hate} satisfies the formula~\eqref{eq:mNE-formula} in $[0,1]_{\prl}$,
and thus $\pr^\ast$ is indeed a mixed Nash equilibrium in $\mathcal{LH}$ and $\mathcal{LH}_{\mathbfitL_m}$.
\end{example}

\section{Conclusion}\label{sec:conclusion}

We have taken up a line of research initiated by the introduction of Boolean games in \cite{Harrenstein-HMW:BooleanGames}
and generalized in \cite{Marchioni-Wooldridge:LukasiewiczGames,Marchioni-Wooldridge:LukasiewiczGamesTOCL} to {\L}ukasiewicz games.
These are particular types of strategic games, where the payoff function for each player is specified
by a propositional formula (of classical logic or some {\L}ukasiewicz logic, respectively)  and where each
strategy assigns truth values (Boolean or many-valued) to the propositional variables under control
of the player in question. The scope of general strategic games that can be directly represented
as Boolean or {\L}ukasiewicz games is limited by the specific type of the formulas that represent
the payoff functions. Motivated by this observation, we present a more general approach referring to a wide class of algebras of truth values in the real unit interval.
For any such algebra~$\alg{A}$ there is a corresponding notion of logical $\alg{A}$-game, where
propositional formulas over the corresponding language $\lang{L}_\alg{A}$ express the
players' payoff functions. Based on a formal definition of the representability of a general
strategic game as a logical $\alg{A}$-game, we have shown for several quite general
classes of finite
strategic games how they can be represented as logical $\alg{A}$-games. Furthermore
we have shown that, for sufficiently expressible algebras $\alg{A}$, the existence
of a pure Nash equilibrium in a logical $\alg{A}$-game can be expressed by an
$\alg{A}$-formula.  This is due to the observation that strategy profiles of logical
games can be identified with evaluations (truth-value assignments) and the fact that
the equilibrium conditions  can be expressed by corresponding propositional formulas.
As is well known, finite strategic games always admit a Nash equilibrium in terms
of mixed strategies. However, it has been left open so far whether  such mixed equilibria
can be characterized by propositional formulas. We have taken up this challenge and
proved that,  for sufficiently rich algebras $\alg{A}$, one can encode
probability distributions over strategies (i.e., evaluations) and
find an $\lang{L}_\alg{A}$-formula that is satisfied by an interpretation if and only if that interpretation
encodes a mixed Nash equilibrium.

Several directions for future research seem natural. For example, we did not consider complexity
issues in this paper. Moreover, from a logical point of view, a particularly interesting question arises for mixed
equilibria in infinite games: The logical machinery developed here is quite obviously insufficiently
expressive to deal with probability distributions over infinite sets of strategies. However, we conjecture
that our results can be generalized also to that case by employing quantified propositional logics
(see, e.g.,~\cite{Baaz-Fermuller-Veith:AnalyticCalculusQuantified}).

Another option would be to deepen the research on the game-theoretic side with the goal of characterizing the special structure of Nash equilibria associated with certain classes of infinite games in which the payoffs are continuous and the strategy space of each player is
isomorphic to $[0,1]$. The motivation comes from known results for separable (polynomial) games \cite[Chapter 11]{Dresher:GamesStrategy}: Every continuous game that falls within this class has at least one mixed Nash equilibrium whose components are probability measures with finite supports. Specifically, let us suppose that each player $i\in N$ has $[0,1]$ as her strategy space. The payoff functions $f_i\colon [0,1]^n\to [0,1]$ are assumed to be polynomials in $n$ variables. Although the set of all mixed strategies in this game is the set of all Borel probability measures over $[0,1]$, which is far beyond the scope of any straightforward logic-based treatment,
it is known that every polynomial game has a Nash equilibrium consisting of finite mixed strategies $p_i$ only. Namely for every $i\in N$ there exist strategies $s_i^1,\dots,s_i^m\in [0,1]$ and coefficients $\alpha_i^1,\dots,\alpha_i^m\geq 0$ such that $\sum_{j=1}^m \alpha_i^j =1$ and
	\begin{equation}\label{eq:fin}
	p_i=\sum_{j=1}^m \alpha_i^j \cdot \delta_{s_i^j},
	\end{equation}
where each $\delta_{s_i^j}$ is the Dirac probability distribution at $s_i^j$. In \cite{Kroupa-Majer:StrategicReasoning} a  sufficient condition for the existence of finite mixed equilibria in constant-sum games given by McNaughton functions has been given. This means that the scope of our logical analysis of mixed equilibria (Section \ref{sec:mixedNE}) can possibly be expanded further, capturing also the existence of equilibria of the type (\ref{eq:fin}) directly in a sufficiently strong propositional language $\lang{L}_\alg{A}$.

Finally let us draw attention to the fact, that when Harrenstein \textit{et al.} introduced
Boolean games in \cite{Harrenstein-HMW:BooleanGames},  they addressed
several topics that go beyond the mere representability of certain
games by logical formulas. In particular they also considered operations on games, a
form of relativized validity and satisfiability motivated by their games,  and a
calculus for deriving winning strategies. It would be certainly interesting to see to what extent these
and related topics can be developed also in the considerably more general many-valued setting
presented here.

\end{document}